\theoremstyle{plain}                 
\newtheorem{theorem}{Theorem}[section]     
\newtheorem{proposition}[theorem]{Proposition} 
\newtheorem{corollary}[theorem]{Corollary}     
\newtheorem{lemma}[theorem]{Lemma}        
\theoremstyle{definition}           
\newtheorem{definition}[theorem]{Definition}    
\theoremstyle{remark}       
\newtheorem{remark}[theorem]{Remark}    
\newtheorem{notation}{Notation}     
\newcommand{\Z}{\mathbb{Z}}
\newcommand{\C}{\mathbb{C}}
\newcommand{\wt}[1]{\widetilde{#1}}
\DeclareMathOperator{\Res}{Res}
\DeclareMathOperator{\Id}{Id}
\newcommand{\cord}[1]{\bigg{\langle} \, #1 \, \bigg{\rangle}}
\begin{document}

\title[Combinatorics of Bousquet-M\'elou--Schaeffer numbers]{Combinatorics of Bousquet-M\'elou--Schaeffer numbers in the light of topological recursion}

\author[B.~Bychkov]{B.~Bychkov}
\address{B.~B.: Faculty of Mathematics, National Research University Higher School of Economics, Usacheva 6, 119048 Moscow, Russia; and Demidov State University, 150003 Yaroslavl, Russia}
\email{bbychkov@hse.ru}

\author[P.~Dunin-Barkowski]{P.~Dunin-Barkowski}
\address{P.~D.-B.: Faculty of Mathematics, National Research University Higher School of Economics, Usacheva 6, 119048 Moscow, Russia; and ITEP, 117218 Moscow, Russia}
\email{ptdunin@hse.ru}

\author[S.~Shadrin]{S.~Shadrin}
\address{S.~S.: Korteweg-de Vries Institute for Mathematics, University of Amsterdam, Postbus 94248, 1090 GE Amsterdam, The Netherlands}
\email{S.Shadrin@uva.nl}

\begin{abstract} In this paper we prove, in a purely combinatorial way, a structural quasi-polynomiality property for the Bousquet-M\'elou--Schaeffer numbers. Conjecturally, this property should follow from the Chekhov-Eynard-Orantin topological recursion for these numbers (or, to be more precise, the Bouchard-Eynard version of the topological recursion for higher order critical points), which we derive in this paper from the recent result of Alexandrov-Chapuy-Eynard-Harnad. To this end, the missing ingredient is a generalization to the case of higher order critical points on the underlying spectral curve of the existing correspondence between the topological recursion and Givental’s theory for cohomological field theories.
\end{abstract}

\maketitle

\setcounter{tocdepth}{3}

\tableofcontents

\section{Introduction}

\subsection{Bousquet-M\'elou--Schaeffer numbers}
Bousquet-M\'elou--Schaeffer numbers~\cite{BS} (we call them the BMS numbers for brevity) can be considered as a special kind of Hurwitz numbers: they enumerate decompositions of a permutation of given cyclic type into a product of a given number of permutations of arbitrary cyclic types with a fixed total number of cycles in these permutations.

Throughout the paper we fix $m\geq 1$.
\begin{definition}
	
For $L\geq m$ and an integer partition $\mu$ we define
\begin{equation}
b^{\bullet,L}_{\mu} = \frac{|\mathrm{Aut}(\mu)|}{|\mu|!} \left|\left\{(\tau_1,\ldots,\tau_m) \, \Big| \, \tau_i \in S_{|\mu|};\, \tau_1\circ\ldots\circ\tau_m\in C_{\mu};  \sum\limits_{i=1}^m \ell(\tau_i) = L \right\}\right|.
\end{equation}
Here $C_\mu = C_\mu(S_{|\mu|})$ is the conjugacy class of the permutations with the cyclic type given by $\mu$, $\ell(\tau_i)$ denotes the number of cycles of the permutation $\tau_i\in S_{|\mu|}$, $i=1,\dots,m$, and $|\mathrm{Aut}(\mu)|$ denotes the order of the automorphism group of the set of parts of $\mu$.
\end{definition}

We define connected BMS numbers by the same formula, but with an additional requirement that the subgroup in $S_{|\mu|}$ generated by the tuple $\tau_1,\ldots,\tau_m$ acts on the set $\{1,\ldots,|\mu|\}$ transitively. Connected BMS numbers can also be defined as quantities of the isomorphism classes of ramified coverings of $\mathbb{C}P^1$ by Riemann surfaces of genus $g\geq 0$, with the ramification profile $\mu$ over $\infty\in \mathbb{C}P^1$ and ramification profiles $\tau_1,\dots,\tau_m$ over $m$ fixed finite points on $\mathbb{C}P^1$, which we chose to be $e^{2\pi\mathsf{i}\cdot j/m}$, $j=1,\dots,m$. In this case, the genus $g$ is related to the number $L$ and the parts $\mu_1,\ldots,\mu_n$, $n=\ell(\mu)$ of the partition $\mu$ via the Riemann--Hurwitz formula:
\begin{equation}
2g = 2 + (m-1)|\mu| - n - L.
\end{equation}
It is more natural for us to parametrize connected BMS number by genus $g$ rather than the number $L$:
\begin{notation}
We denote connected BMS numbers of genus $g$ by $b^\circ_{g,\mu}$.
\end{notation}
 
Bousquet-M\'elou and Schaeffer obtained in \cite{BS} a closed formula for the connected BMS numbers of genus $0$:
\begin{equation}\label{eq:BMSgenus0}
b^\circ_{0,\mu} = m \cdot \big((m-1)|\mu|-1\big)_{n-3}\cdot \prod\limits_{i=1}^{n}\binom{m\mu_i-1}{\mu_i};
\end{equation}
where we use the standard notation $(a)_b$ for the Pochhammer symbol, that is
\begin{equation} \label{eq:defPoch}
(a)_b \coloneqq \begin{cases}
 a(a-1)\cdots(a-b+1) & b> 0; \\
 1/(a+1)\cdots (a-b) & b < 0;\\
 1 & b=0.
\end{cases}
\end{equation}
In particular, for $n>3$ we have 
\begin{equation}
\big((m-1)|\mu|-1\big)_{n-3} =((m-1)|\mu|-1)((m-1)|\mu|-2)\ldots((m-1)|\mu|-n+3).
\end{equation}

\subsection{Combinatorial interpretations and connection to integrable systems}
The BMS numbers are interesting by themselves, and this type of enumeration problems is nowadays classical in combinatorics, see e.~g.~\cite{LandoZvonkin}, going back to the paper of Hurwitz~\cite{Hurwitz} (and the formula~\eqref{eq:BMSgenus0} is a generalization of a theorem of Hurwitz). But there is an extra motivation to study the BMS numbers that comes from a rich system of connections that they have with other areas of combinatorics and with integrable systems.  

Connections in combinatorics include an interpretation of the BMS numbers as enumeration of constellations, in the terminology of A.~Zvonkin. These constellations are basically the pictures that one can obtain through lifting the unit circle on $\mathbb{C}P^1$ considered as an $m$-gon with vertices $e^{2\pi\mathsf{i}\cdot j/m}$, $j=1,\dots,m$, labeled in the cyclic order, with the interior (resp., exterior) of the unit disk colored by black (resp., white) color. The enumeration of constellations was a part of the original motivation of Bousquet-M\'elou--Schaeffer, and it is a very active area of research, see e.~g.~\cite{Louf} for some recent results. There is also an interpretation of the BMS numbers in terms of the so-called Hurwitz numbers with $m$ strictly monotone blocks, which is a special case of more general Harnad-Orlov correspondence~\cite{HO} (see also \cite{ALS} for an exposition).

A connection with integrable systems is established via identification of a generating function of the BMS numbers with a particular tau-function of the KP hierarchy from the Orlov-Scherbin  (hypergeometric) family~\cite{OS,GJ,GuayHarnad,HO}, see also surveys in~\cite{KL,ALS,H}. Namely, let $\mathsf{cr}^\lambda = (\mathsf{cr}^\lambda_1,\dots,\mathsf{cr}^\lambda_{|\lambda|})$ denote the vector of contents of the standard Young tableau of a partition $\lambda$. The following KP tau-function belongs to the Orlov-Scherbin family
\begin{equation}
\label{eq:tau1}
Z\coloneqq\sum_{\lambda} \frac{\dim\lambda}{|\lambda|!} \prod_{i=1}^{|\lambda|} (1+\hbar \mathsf{cr}^\lambda_i)^m s_\lambda(p_1,p_2,\ldots)
\end{equation} 
(it is more convenient for us to use a rescaling of the standard KP variables $p_i = it_i$, $i=1,2,\dots,$) and gives an exponential generating function for the BMS numbers, that is,
\begin{equation}
\label{eq:tau2}
\log Z= \sum_{g=0}^\infty \sum_{\mu} \frac{\hbar^{2g-2+\ell(\mu)+|\mu|}}{|\mathrm{Aut}(\mu)|} b^\circ_{g,\mu} \prod_{i=1}^{\ell(\mu)} p_{\mu_i}.
\end{equation}








\subsection{Topological recursion} \label{sec:intro-toporec}

Topological recursion of Chekhov, Eynard, and Orantin~\cite{EynardOrantin,EynardOverview,EynardBook,LiuMulase} is a universal and a very powerful way to look at various enumerative problems in combinatorics and enumerative geometry, as, for instance, Hurwitz theory and Gromov-Witten theory, and it is the base for the remodelling of the B-model principle proposed in~\cite{BKMP}. 

The $n$-point generating differentials of the BMS numbers satisfy the topological recursion in the Bouchard-Eynard formulation~\cite{BouchardEynard}. It is a straightforward corollary of a more general result of Alexandrov, Chapuy, Eynard, and Harnad~\cite{ACEH}, and for $m=2$ it was proved by Kazarian and Zograf in~\cite{KZ2}. Let us explain this statement in detail. 

Let $z$ be a global coordinate on $\mathbb{C}P^1$ and consider the following function $x$ of $z$: 
\begin{equation}\label{eq:introx}
x:=\frac{(1+z)^m}z. 
\end{equation}
Our goal is to construct recursively a set of symmetric differentials $\omega_{g,n}(z_1,\dots,z_n)$, $g\geq 0$, $n\geq 1$, with the initial conditions 
\begin{equation}
\omega_{0,1}(z_1) \coloneqq -\frac{z_1^2}{(1+z_1)^m} dx(z_1) \qquad \text{and} \qquad
\omega_{0,2}(z_1,z_2)\coloneqq \frac{dz_1dz_2}{(z_1-z_2)^2}.
\end{equation}

Note that $x\colon \mathbb{C}P^1\to\mathbb{C}P^1$, $z\mapsto x(z)$, is a covering of degree $m$. Let $C$ denote the preimage of the positively oriented circle $|x|=R\gg 0$, where $R$ is chosen to be big enough so that the disk $|x|<R$ contains all critical values of $x$. The contour $C$ consists of two connected components that cover the circle $|x|=R$ with degrees $1$ and $m-1$ respectively, and it is oriented as the boundary of $x$-preimage of the disk $|x|<R$. For a point $\zeta\in C$ we denote by $\zeta_1, \zeta_2,\dots,\zeta_m$ the points in $x^{-1}(x(\zeta))$ with $\zeta_1=\zeta$.

The topological recursion reads:
\begin{align} \label{eq:topologicalrecursion}
& \omega_{g,n+1}(z_{[n]},z_{n+1}):= 
\\ \notag
& \frac{-1}{2\pi\mathsf{i}}\oint\limits_C \sum_{1\subsetneq I\subset [m]} \frac{\int\limits_o^{\zeta_1} \omega_{0,2}(\cdot, z_{n+1})}{\prod\limits_{i\in I} (\omega_{0,1}(\zeta_i)-\omega_{0,1}(\zeta_1))}
\sum_{\substack{
J\,\vdash\, I\cup \{1\} \\
\sqcup_{i=1}^{\ell(J)} N_i = [n] \\
\!\!\! \!
\sum_{i=1}^{\ell(J)} g_i = g +\ell(J) -|I|-1
}}^{\text{no}\, (0,1)}
\!\!\! \!
\prod_{i=1}^{\ell(J)} \omega_{g_i,|J_i|+|N_i|}(\zeta_{J_i},z_{N_i}),
\end{align}
where $o$ is an arbitrary base point, in the second sum we forbid the choices where we have $(g_i,|J_i|+|N_i|)=(0,1)$, by $[m]$ (resp., $[n]$) we denote the set $\{1,\dots,m\}$ (resp., $\{1,\dots,n\}$), and the notation of the form $\zeta_{J}$ means all variables $\zeta_i$, $i\in J$. Though it might not be obvious at the first glance, the right hand side of equation~\eqref{eq:topologicalrecursion} does not depend on the choice of the point $o$ and on the way we label $\zeta_2,\dots,\zeta_m$, and it is a symmetric function of $z_1,\dots,z_{n+1}$. 

The claim is that thus defined differentials $\omega_{g,n}$ are related to the BMS numbers via their expansion in the coordinate $X=1/x$ near $x=\infty$:
\begin{equation}\label{eq:expansionomega}
\omega_{g,n}(z_{[n]}) -\delta_{g,0}\delta_{n,2} \frac{dX_1dX_2}{(X_1-X_2)^2}  =\sum_{\mu_1,\dots,\mu_n=1}^\infty b^\circ_{g,\mu} \prod_{i=1}^n d X_i^{\mu_i},
\end{equation}
where $X_i=1/x(z_i)$, $i=1,\dots,n$. 

We prove that the BMS numbers can be reproduced via equations~\eqref{eq:introx}-\eqref{eq:expansionomega} (for a physicist it would mean that the spectral curve $x^2y=(1+xy)^m$, $y=\omega_{0,1}/dx$, provides the correct B-model for the BMS numbers); though, as we have already mentioned above, it is merely a small addendum to the argument of Alexandrov-Chapuy-Eynard-Harnad in~\cite{ACEH}, where they had to exclude the case of the BMS numbers mostly for the clarity of exposition. It is a very powerful statement that says a lot about the combinatorial structure of the BMS numbers (though the necessary piece of theory is not fully developed yet, see remarks~\ref{rem:BMS-ELSV}-\ref{rem:notexist} below), and it serves for us as a motivation and an inspiration to analyze the corresponding combinatorial structure of the BMS numbers independently in purely combinatorial terms.

\subsection{Combinatorial structure of the BMS numbers}

The fact that the BMS numbers satisfy the topological recursion suggests a very special combinatorial structure for them. Namely, it should impose that for $2g-2+n>0$, $n=\ell(\mu)$, the expression
\begin{equation} \label{eq:polynomialBMSexpression}
b^\circ_{g,\mu} \cdot \frac{\displaystyle\prod\limits_{i=1}^n\prod\limits_{\substack{m\leq j_i \leq 4g-4+2n-1 \\ m\nmid j_i}} \left(\mu_i-\dfrac{j_i}{m}\right) }{\displaystyle\prod\limits_{i=1}^n \dfrac{(m\mu_i-m)!}{\mu_i!\,(m\mu_i-\mu_i-1)!}}
\end{equation}
is given by a polynomial $\mathrm{Poly}_{g,n}(\mu_1,\dots,\mu_n)$. It is an absolutely remarkable property that can equivalently be formulated as a way to express the $n$-point generating functions for the BMS numbers given by $\sum_{\mu_1,\dots,\mu_n=1}^\infty b^\circ_{g,\mu} X_1^{\mu_1}\cdots X_n^{\mu_n}$ as finite linear combinations of the products of some explicitly given rational functions in $z_1,\dots,z_n$, where $X_i=z_i/(1+z_i)^m$, $i=1,\dots,n$. 

A natural question that immediately arises is whether one can prove such a remarkable property of the BMS numbers in a purely combinatorial way. Besides purely combinatorial motivation to do this, it is also a way to manifestly see the B-model side within the remodeling of the B-model philosophy as an emergent phenomenon, that is, the spectral curve defined by the function $x=x(z)$ (or rather the multi-valued function $z=z(x)$) emerges as a natural Riemann surface prescribed by the analytic behavior of the $n$-point generating functions of the underlying enumerative problem in all genera.

\emph{This is precisely the goal of this paper: our main result is the proof of Theorem \ref{th:apoly}, a purely combinatorial proof of the polynomiality of~\eqref{eq:polynomialBMSexpression} in $\mu_1,\dots,\mu_n$ (for every $g\geq 0$ such that $2g-2+n>0$). }

One can compare this with the analogous property of the usual connected Hurwitz numbers $h_{g,\mu}$. In that case, the polynomiality of the expressions
\begin{equation} \label{eq:HurwitzPolynomial}
h_{g,\mu} \cdot \prod_{i=1}^n \frac{\mu_i!}{\mu_i^{\mu_i}} 
\end{equation}
 for $2g-2+n>0$ was conjectured by Goulden-Jackson-Vainshtein in~\cite{GouldenJacksonVainshtein}. It was proved by Ekedahl-Lando-Shapiro-Vainshtein in~\cite{ELSV} using the ELSV formula that expresses Hurwitz numbers in terms of the intersection numbers on $\overline{\mathcal{M}}_{g,n}$. An equivalent reformulation is that $\sum_{\mu_1,\dots,\mu_n=1}^\infty h_{g,\mu} X_1^{\mu_1}\cdots X_n^{\mu_n}$ can be expressed as a finite linear combination of products of certain explicitly given rational functions in $z_1,\dots,z_n$, where $X_i=z_ie^{-z_i}$, $i=1,\dots,n$, was then derived as an intermediate step towards a more refined result by Goulden-Jackson-Vakil in~\cite{GouldenJacksonVakil}. 
 
 The following question, however, remained open: is there any purely combinatorial way to see the polynomiality of~\eqref{eq:HurwitzPolynomial} that would use the combinatorial definition of Hurwitz numbers in terms of the characters of the symmetric group? It took about 15 years before a purely combinatorial proof that wouldn't use the ELSV formula has appeared in~\cite{KLS} (see also~\cite{DBKOSS} for an earlier proof of the polynomiality of~\eqref{eq:HurwitzPolynomial} which is not entirely combinatorial but is also independent of the ELSV formula).
 
 \begin{remark}
 The present paper follows the line of papers \cite{KLS,KLPS,DPSS} (and, to a lesser extent, \cite{DBKOSS,DLPS}) where similar quasi-polynomiality properties were proved for other types of objects. Some general ideas are shared among these papers (including the present one), but the underlying combinatorics and the required proofs of the corresponding statements always turned out to be new and exciting.
 \end{remark}

\subsection{Organization of the paper} In Section~\ref{sec:toporec-proof} we show how to complete the argument of Alexandrov-Chapuy-Eynard-Harnad in order to prove the topological recursion for the BMS numbers, and explain what the topological recursion statement should imply for the combinatorics of the BMS numbers (modulo certain piece of theory that is missing in the literature as of now). 

In Section~\ref{sec:specfuncs} we study the so-called $\xi$-functions on the particular spectral curve corresponding to the BMS numbers. These $\xi$-functions are an important ingredient of the topological recursion theory and their structure provides the motivation for looking into the quasi-polynomiality structure of the BMS numbers.

In Section~\ref{sec:semiinf} we briefly recall the main facts from the semi-infinite wedge space (a.k.a. free-fermion) theory which are needed in the rest of the paper.

In Section~\ref{sec:aops} we introduce the $\mathcal{A}$-operators which allow to express the BMS numbers in terms of the semi-infinite wedge space correlators in a convenient way. We then study the properties of these $\mathcal{A}$-operators.

In Section~\ref{sec:bmsquasipol}, building upon the results of the preceding section, we prove the main theorem of the present paper which states the quasi-polynomiality of the BMS numbers.


Appendix \ref{sec:appfaul} is devoted to proving certain facts revolving around the classical Faulhaber's formula which are used in Section~\ref{sec:aops}.

\subsection{Acknowledgments}  B.~B. and P.~D.-B. thank Yu.~Burman, M.~Kazarian, and S.~Lando for useful discussions and also would like to acknowledge the warm hospitality of Korteweg-de Vries Institute for Mathematics. S.~S. thanks A.~Alexandrov, R.~Kramer, and D.~Lewanski for useful discussions. The research of B.~B. and P.~D.-B. 
was supported by the Russian Science Foundation (project 16-11-10316).

\section{Topological recursion for the BMS numbers} \label{sec:toporec-proof}
In this section we prove the main statement announced in Section~\ref{sec:intro-toporec} of the introduction. Namely, we prove that the differential forms defined by equations~\eqref{eq:introx}-\eqref{eq:topologicalrecursion} produce the BMS numbers via the expansion given by equation~\eqref{eq:expansionomega}. In order to make this statement a bit more precise, observe that $X(z)=1/x(z)=z/(1+z)^m$ can serve as a local coordinate in a neighborhood of $z=0$. 

\subsection{General statement}

\begin{proposition}\label{prop:ACEH} For the differential forms $\omega_{g,n}$ defined via equations~\eqref{eq:introx}-\eqref{eq:topologicalrecursion} we have:
\begin{equation}\label{eq:ACEHprop}
\Res\limits_{X_1=0} \cdots \Res\limits_{X_n=0} \frac{\omega_{g,n}(z_{[n]})-\delta_{g,0}\delta_{n,2} \frac{dX_1dX_2}{(X_1-X_2)^2}}{\prod_{i=1}^n \mu_i X_i^{\mu_i}} = b^\circ_{g,\mu},
\end{equation}
where $\mu=(\mu_1,\dots,\mu_n)$.
\end{proposition}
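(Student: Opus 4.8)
The plan is to prove Proposition~\ref{prop:ACEH} by showing that the differentials $\omega_{g,n}$ defined through the Bouchard--Eynard topological recursion~\eqref{eq:topologicalrecursion} coincide with the $n$-point generating differentials of the BMS numbers, which is precisely the content of~\eqref{eq:expansionomega}. The residue extraction in~\eqref{eq:ACEHprop} is just the standard coefficient-extraction operation: if
\begin{equation}
\omega_{g,n}(z_{[n]}) -\delta_{g,0}\delta_{n,2} \frac{dX_1dX_2}{(X_1-X_2)^2}  =\sum_{\mu_1,\dots,\mu_n=1}^\infty b^\circ_{g,\mu} \prod_{i=1}^n d X_i^{\mu_i},
\end{equation}
then since $dX_i^{\mu_i} = \mu_i X_i^{\mu_i-1}dX_i$, taking $\Res_{X_i=0}\big(\,\cdot\,/(\mu_i X_i^{\mu_i})\big)$ picks out exactly the coefficient $b^\circ_{g,\mu}$. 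So the real task reduces to establishing~\eqref{eq:expansionomega}, and the whole proposition is, as the introduction already emphasizes, essentially a corollary of the Alexandrov--Chapuy--Eynard--Harnad result in~\cite{ACEH}.

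The main line of argument I would follow is to identify the input data of the present problem with the general Orlov--Scherbin/hypergeometric setup handled in~\cite{ACEH}. First I would recall from~\eqref{eq:tau1}--\eqref{eq:tau2} that the generating function $\log Z$ for the connected BMS numbers is the free energy of a KP tau-function in the Orlov--Scherbin family, with weight $\prod_i (1+\hbar\,\mathsf{cr}^\lambda_i)^m$. The key point is that the ACEH theorem asserts, for a general Orlov--Scherbin tau-function with a polynomial (or rational) weight generating series, that the associated $n$-point differentials satisfy the Bouchard--Eynard topological recursion on an explicitly determined spectral curve, with $x=x(z)$ read off from the weight. I would then match the specific weight $(1+\hbar\,\mathsf{cr})^m$ to the function $x=(1+z)^m/z$ of~\eqref{eq:introx} and verify that the resulting $\omega_{0,1}$ and $\omega_{0,2}$ agree with the stated initial data. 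Concretely, one computes the disc and cylinder amplitudes from the one- and two-point BMS generating functions and checks $\omega_{0,1}=-\frac{z^2}{(1+z)^m}\,dx$ and $\omega_{0,2}=dz_1\,dz_2/(z_1-z_2)^2$.

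The step that requires genuine care, rather than routine bookkeeping, is confirming that the spectral curve attached to the BMS weight is precisely $x=(1+z)^m/z$ and that this curve has a higher-order critical point at $z=0$, so that the \emph{Bouchard--Eynard} form of the recursion~\eqref{eq:topologicalrecursion} (with its sum over subsets $I\subset[m]$ and over set partitions $J\vdash I\cup\{1\}$) is the correct one. One must check that the critical points of $x$, i.e. the zeros of $dx/dz = \big((m-1)z-1\big)(1+z)^{m-1}/z^2$, organize into the two contour components of degrees $1$ and $m-1$ described in Section~\ref{sec:intro-toporec}, matching the ramification structure of the covering $x\colon\mathbb{C}P^1\to\mathbb{C}P^1$. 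The main obstacle is therefore not the algebra but ensuring that the BMS case genuinely falls under the hypotheses of~\cite{ACEH} and that the degenerate ramification at the origin is handled by the higher-order Bouchard--Eynard kernel rather than the classical Eynard--Orantin one; once this identification is in place, equation~\eqref{eq:expansionomega} follows from their theorem and~\eqref{eq:ACEHprop} is immediate by the residue computation above.
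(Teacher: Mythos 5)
Your reduction of~\eqref{eq:ACEHprop} to~\eqref{eq:expansionomega} via the observation that $\Res_{X_i=0}\bigl(\,\cdot\,/(\mu_i X_i^{\mu_i})\bigr)$ is coefficient extraction is fine, and you correctly identify \cite{ACEH} as the essential external input. But there is a genuine gap at exactly the point you wave through: you say the remaining task is ``ensuring that the BMS case genuinely falls under the hypotheses of~\cite{ACEH},'' and that once this is checked the claim ``follows from their theorem.'' It does not, and it cannot be made to by checking hypotheses, because the BMS case \emph{violates} them: \cite[Theorem 1.1]{ACEH} is proved only away from the discriminant of the weight, and the confluent weight $\prod_i(1+\hbar\,\mathsf{cr}^\lambda_i)^m$, i.e.\ the point $\epsilon=(1,\dots,1)$ in the family $\prod_{j}(1+\hbar\epsilon_j\mathsf{cr}^\lambda_i)$, sits precisely on that discriminant --- this is exactly where the simple critical points of the spectral curve collide into the order-$(m-1)$ degenerate one that forces the Bouchard--Eynard kernel. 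The paper's proof supplies the missing idea: deform to $x(z;\epsilon)=\prod_{i=1}^m(1+\epsilon_i z)/z$ with $\epsilon$ in a small polydisk $U$ around $(1,\dots,1)$, apply \cite{ACEH} (in its Bouchard--Eynard reformulation via \cite[Theorem 5]{BouchardEynard}) off the discriminant to get \eqref{eq:epsilon-residues} there, observe that both sides of that identity are manifestly analytic in $\epsilon\in U$ by construction, and then extend the identity to $\epsilon=(1,\dots,1)$ by analytic continuation. Without this deformation-and-limit argument your proof does not close.

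A secondary inaccuracy: you place the degenerate ramification ``at the origin'' ($z=0$). For $x=(1+z)^m/z$ one has $dx/dz=(1+z)^{m-1}\bigl((m-1)z-1\bigr)/z^2$, so $z=0$ is a pole of $x$ (it is where the expansion variable $X=1/x$ vanishes and where the $\omega_{g,n}$ are expanded), while the degenerate critical point of order $m-1$ is at $z=-1$ and the remaining simple one is at $z=1/(m-1)$. This does not affect the logic of your reduction, but it matters if one actually carries out the local analysis you allude to.
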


\begin{proof} Let us recall the main result of Alexandrov-Chapuy-Eynard-Harnad~\cite[Theorem 1.1]{ACEH} reformulated in an equivalent form
	in the spirit of~\cite{BouchardEynard} with the help of \cite[Theorem 5]{BouchardEynard}. 
	
Let $\epsilon = (\epsilon_1,\dots,\epsilon_m)\in \mathbb{C}^m$. It is convenient to assume that each $\epsilon_i$ belongs to a small open disk of radius $\rho\ll 1$ with the center at $1$, that is, $\epsilon\in U\subset \mathbb{C}^m$, where $U = \{ (\epsilon_1,\dots,\epsilon_m)\in \mathbb{C}^m\,|\, |\epsilon_i-1|<\rho \}$.

 Let $z$ be a global coordinate on $\mathbb{C}P^1$ and consider a function $x_\epsilon$ of $z$ that also depends on $\epsilon$ defined as 
\begin{equation}\label{eq:epsilon-x}
x(z;\epsilon) = \frac{\prod_{i=1}^m (1+\epsilon_i z)}{z}.
\end{equation}
Note that $x(\cdot;\epsilon)\colon \mathbb{C}P^1\to\mathbb{C}P^1$, $z\mapsto x(z;\epsilon)$, is a covering of degree $m$, and it is also a superpotential of a certain Frobenius manifold as discussed in detail in~\cite[Section 8]{DubrovinSuperpotential}. Let $C$ denote the preimage of the positively oriented circle $|x|=R\gg 0$, where $R$ is chosen to be big enough so that the disk $|x|<R$ contains all critical values of $x$. The contour $C$ consists of two connected components that cover the circle $|x|=R$ with degrees $1$ and $m-1$ respectively, and it is oriented as the boundary of $x(\cdot;\epsilon)$-preimage of the disk $|x|<R$. For a point $\zeta\in C$ we denote by $\zeta_1, \zeta_2,\dots,\zeta_m$ the points in $x(\cdot;\epsilon)^{-1}(x(\zeta;\epsilon))$ with $\zeta_1=\zeta$.

We construct recursively a set of symmetric differentials $\omega_{g,n}(z_1,\dots,z_n;\epsilon)$, $g\geq 0$, $n\geq 1$, with the initial conditions 
\begin{equation}
\omega_{0,1}(z_1;\epsilon) \coloneqq -\frac{z_1^2}{\prod_{i=1}^m (1+\epsilon_i z)} dx(z_1) \qquad \text{and} \qquad
\omega_{0,2}(z_1,z_2;\epsilon)\coloneqq \frac{dz_1dz_2}{(z_1-z_2)^2},
\end{equation}
using the following recursion:
\begin{align} \label{eq:epsilon-topologicalrecursion}
& \omega_{g,n+1}(z_{[n]},z_{n+1};\epsilon):= 
\\ \notag
& \frac{-1}{2\pi\mathsf{i}}\oint\limits_C \sum_{1\subsetneq I\subset [m]} \frac{\int\limits_o^{\zeta_1} \omega_{0,2}(\cdot, z_{n+1})}{\prod\limits_{i\in I} (\omega_{0,1}(\zeta_i)-\omega_{0,1}(\zeta_1))}
\sum_{\substack{
		J\,\vdash\, I\cup \{1\} \\
		\sqcup_{i=1}^{\ell(J)} N_i = [n] \\
		\!\!\! \!
		\sum_{i=1}^{\ell(J)} g_i = g +\ell(J) -|I|-1
}}^{\text{no}\, (0,1)}
\!\!\! \!
\prod_{i=1}^{\ell(J)} \omega_{g_i,|J_i|+|N_i|}(\zeta_{J_i},z_{N_i};\epsilon),
\end{align}

Then \cite[Theorem 1.1]{ACEH} implies that \emph{outside the discriminant of $x(\cdot,\epsilon)$ in $U$} (they had to exclude the discriminant for technical reasons)
\begin{equation}\label{eq:epsilon-residues}
\Res\limits_{X_1=0} \cdots \Res\limits_{X_n=0} \frac{\omega_{g,n}(z_{[n]};\epsilon)-\delta_{g,0}\delta_{n,2} \frac{dX_1dX_2}{(X_1-X_2)^2}}{\prod_{i=1}^n \mu_i X_i^{\mu_i}} = b^\circ_{g,\mu}(\epsilon),
\end{equation}
where $\mu=(\mu_1,\dots,\mu_n)$ and $b^\circ_{g,\mu}(\epsilon)$ are defined through the expansion of the logarithm of the following Orlov-Scherbin tau-function
\begin{equation}
Z(\epsilon)\coloneqq\sum_{\lambda} \frac{\dim\lambda}{|\lambda|!} \prod_{i=1}^{|\lambda|} \prod_{j=1}^m (1+\hbar \epsilon_j \mathsf{cr}^\lambda_i) s_\lambda(p_1,p_2,\ldots)
\end{equation} 
as
\begin{equation}
\log Z(\epsilon)= \sum_{g=0}^\infty \sum_{\mu} \frac{\hbar^{2g-2+\ell(\mu)+|\mu|}}{|\mathrm{Aut}(\mu)|} b^\circ_{g,\mu}(\epsilon) \prod_{i=1}^{\ell(\mu)} p_{\mu_i}.
\end{equation}

Note that the point $\epsilon=(1,\dots,1)$ does belong to the discriminant, so the statement of \cite[Theorem 1.1]{ACEH} doesn't work directly in this case. However, we intentionally reformulated that theorem in a way where it is absolutely manifest from the construction and equations~\eqref{eq:epsilon-x}-\eqref{eq:epsilon-topologicalrecursion} that both the left hand side and the right hand side of equation~\eqref{eq:epsilon-residues} depend analytically on $\epsilon\in U$. Thus statement \eqref{eq:epsilon-residues} can be extended to the discriminant, and the analyticity implies the continuity of the values, that is, 
\begin{equation}
\Res\limits_{X_1=0} \cdots \Res\limits_{X_n=0} \frac{\omega_{g,n}(z_{[n]};(1,\dots,1))-\delta_{g,0}\delta_{n,2} \frac{dX_1dX_2}{(X_1-X_2)^2}}{\prod_{i=1}^n \mu_i X_i^{\mu_i}} = b^\circ_{g,\mu}(1,\dots,1) = b^\circ_{g,\mu},
\end{equation}
which proves equation~\eqref{eq:ACEHprop}.
\end{proof}

\begin{remark}\label{rem:BMS-ELSV}
Proposition~\ref{prop:ACEH} and a proper generalization of the results of~\cite{EynardIntersection,DOSS} (see also the expositions in~\cite{DubrovinSuperpotential,LPSZ,Lewanski}) would imply that the BMS numbers can be expressed in terms of the intersection numbers of a cohomological field theory with a non-flat unit. This cohomological field theory is not particularly nice, though it is remotely related to the one discussed in ~\cite[Section 8]{DubrovinSuperpotential}. It can be represented as a particular element of the Givental group action applied to the direct sum of the rescaled Witten $m$-spin class and a trivial TFT, with deformed dilaton leaves. The corresponding ELSV-type formula for $b^\circ_{g,\mu}$ would look like a polynomial in $\mu_1,\dots,\mu_n$ with an explicit non-polynomial factor.
\end{remark}

\begin{remark}\label{rem:remark2xi} Equivalently, Proposition~\ref{prop:ACEH} and a proper generalization of the results of~\cite{EynardIntersection,DOSS} (see also the necessary local analysis near a branching point of higher order in~\cite[Section 7]{DubrovinSuperpotential}, \cite{MilanovLew}, and~\cite{BoucharEynard-2}) would imply that for $2g-2+n>0$ the formal $n$-point functions $\sum_{\mu_1,\dots,\mu_n=1}^\infty h_{g,\mu} X_1^{\mu_1}\cdots X_n^{\mu_n}$ are the Taylor series expansions near $X_1=\cdots=X_n=0$ of a finite linear combination of the products of the derivatives of certain globally defined $\xi$-functions that we introduce in the next section. 
\end{remark}

\begin{remark} \label{rem:notexist}
The necessary piece of theory to deal with ELSV-type formulas and topological recursion / cohomological field theory correspondence at the discriminant (which we need to make the statements of the Remarks~\ref{rem:BMS-ELSV} and~\ref{rem:remark2xi} more explicit and to prove them) doesn't exist in the literature at the moment. However, we expect that it should be rather straightforward to prove the appropriate generalizations of the statements of~\cite{EynardIntersection,DOSS}, and some important papers in this direction include~\cite{Janda,BorotBou}.
\end{remark}

These remarks indicate a missing piece of theory that, once completed, would imply severe restrictions on the possible dependence of $b^\circ_{g,\mu}$ on the parameters $\mu_1,\dots,\mu_n$, $n=\ell(\mu)$. Namely, these remarks imply the polynomiality of \eqref{eq:polynomialBMSexpression}. In the rest of the paper we prove this polynomiality independently, via purely combinatorial methods, and explain explicitly why it is equivalent to the statement of Remark~\ref{rem:remark2xi} (see Corollary~\ref{corollaryW}).

\subsection{Unstable cases} Let us also, for completeness, derive equation~\eqref{eq:expansionomega} for the unstable cases $(g,n) \in \{(0,1),(0,2)\}$ directly. 

From the explicit formula for the genus $0$ BMS-numbers from \cite{BS} (formula \eqref{eq:BMSgenus0}), or equivalently from formula \eqref{eq:bAexpr} we get:
\begin{align}
&b^\circ_{0,k_1} = \frac{m(mk_1-1)!}{k_1!(mk_1-k_1+1)!}\\
&b^\circ_{0,k_1k_2} = \frac{m}{m(k_1+k_2)-k_1-k_2}\binom{mk_1-1}{k_1}\binom{mk_2-1}{k_2} \label{eq:bms02}
\end{align}

\begin{proposition}
	The expansion of $\omega_{0,1}$ is given by:
	\begin{equation}
	-\frac{z_1^2}{(1+z_1)^m} dx_1=d\sum_{k=1}^\infty b^\circ_{0,\mu_1} X_1^{\mu_1}
	\end{equation}
\end{proposition}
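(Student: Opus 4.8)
The statement to prove is the identity
$$
-\frac{z_1^2}{(1+z_1)^m}\,dx_1 = d\sum_{\mu_1=1}^\infty b^\circ_{0,\mu_1}\,X_1^{\mu_1},
$$
where $X_1 = z_1/(1+z_1)^m$ and $b^\circ_{0,\mu_1}$ is given explicitly by $m(m\mu_1-1)!/(\mu_1!\,(m\mu_1-\mu_1+1)!)$. The plan is to identify both sides as expansions in the local coordinate $X_1$ near $z_1=0$ and match them coefficient by coefficient. Since both sides are exact differentials (the left-hand side equals $dX_1$ by the very definition of $\omega_{0,1}$ and $x = 1/X$, so $-z_1^2/(1+z_1)^m \cdot dx_1 = dX_1$ after a direct computation), it suffices to show the equality of the underlying functions up to an irrelevant additive constant, i.e. that $X_1 = \sum_{\mu_1\geq 1} b^\circ_{0,\mu_1} X_1^{\mu_1}$ is \emph{not} what we want --- rather, we must compute $-z_1^2/(1+z_1)^m\,dx_1$ honestly and compare its $X_1$-expansion with the term-by-term derivative of the right-hand series.

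\emph{First} I would compute the left-hand side explicitly. Using $x = (1+z_1)^m/z_1$ and $X_1 = 1/x$, one has $dx = \bigl(m(1+z_1)^{m-1}z_1 - (1+z_1)^m\bigr)/z_1^2\,dz_1$, so
$$
-\frac{z_1^2}{(1+z_1)^m}\,dx_1 = \frac{(1+z_1) - m z_1}{(1+z_1)}\,dz_1 = \left(1 - \frac{m z_1}{1+z_1}\right)dz_1.
$$
This is a clean rational $1$-form in $z_1$, and the goal is to re-expand it in powers of $X_1$. \emph{Second} I would expand the right-hand side: differentiating the series gives $d\sum b^\circ_{0,\mu_1}X_1^{\mu_1} = \sum \mu_1 b^\circ_{0,\mu_1} X_1^{\mu_1-1}\,dX_1$, so by Proposition~\ref{prop:ACEH} (equation~\eqref{eq:ACEHprop} specialized to $(g,n)=(0,1)$) the coefficient of $X_1^{\mu_1-1}\,dX_1$ must be $\mu_1 b^\circ_{0,\mu_1}$. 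Equivalently, I want to verify that the residue extraction
$$
\Res_{X_1=0}\frac{1}{\mu_1 X_1^{\mu_1}}\left(-\frac{z_1^2}{(1+z_1)^m}\,dx_1\right) = b^\circ_{0,\mu_1}
$$
reproduces the explicit genus-zero one-part formula.

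\emph{Third}, the actual extraction is a Lagrange-inversion computation. Since $X_1 = z_1(1+z_1)^{-m}$, the residue $\Res_{X_1=0}$ can be converted to a residue in $z_1$; writing the left-hand form as $\bigl(1 - mz_1/(1+z_1)\bigr)dz_1$ and pulling back, the coefficient of $X_1^{\mu_1}$ in $\sum b^\circ_{0,\mu_1}X_1^{\mu_1}$ becomes a contour integral $\tfrac{1}{2\pi\mathsf{i}}\oint \bigl(1-\tfrac{mz_1}{1+z_1}\bigr)(1+z_1)^{m\mu_1}z_1^{-\mu_1-1}\,dz_1$ (after the standard Lagrange change of variables, with the Jacobian absorbed). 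Extracting the $z_1^{\mu_1}$-coefficient of $(1+z_1)^{m\mu_1}$ and of its neighbor term then produces a combination of binomial coefficients $\binom{m\mu_1}{\mu_1}$ and $\binom{m\mu_1}{\mu_1-1}$, which simplifies via the factorial identity $\binom{m\mu_1}{\mu_1} - m\binom{m\mu_1-1}{\mu_1-1}$-type manipulations to exactly $m(m\mu_1-1)!/(\mu_1!\,(m\mu_1-\mu_1+1)!)$.

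The main obstacle is bookkeeping, not conceptual: I expect the only delicate point to be carrying the Lagrange-inversion Jacobian correctly through the residue change of variables and then collapsing the resulting two-term binomial expression into the single Pochhammer/factorial form of $b^\circ_{0,\mu_1}$. Everything else is a routine expansion, and the matching is forced once the leading normalization (the $\mu_1$-factor coming from $dX_1^{\mu_1} = \mu_1 X_1^{\mu_1-1}dX_1$) is tracked. Thus the proof reduces to the computation of a single residue and an elementary binomial simplification.
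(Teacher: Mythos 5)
Your proposal is correct and follows essentially the same route as the paper: both reduce the claim to the single contour integral $\oint (1+z)^{m\mu-1}(1+z-mz)\,z^{-\mu}\,dz$ (the paper phrases it as computing $[X^{\mu}]z$ after writing $\omega_{0,1}=\tfrac{z}{X}\,dX$, you phrase it as a residue extraction from $\bigl(1-\tfrac{mz}{1+z}\bigr)dz$, but these are the same integral) and then simplify the resulting two binomial coefficients to the closed form of $b^\circ_{0,\mu}$. One small caution: your parenthetical claim that the left-hand side "equals $dX_1$" is false --- it equals $(1+z_1)^m\,dX_1=\tfrac{z_1}{X_1}\,dX_1$ --- but since you immediately discard that shortcut and do the honest computation, it does not affect the argument.
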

\begin{proof}
	
	We have: 
	\begin{equation}
	 -\frac{z^2}{(1+z)^m} dx = \dfrac{z}{X}dX
	\end{equation}
Note that	
	\begin{align}
	[X^k]z &= \oint \frac{z}{X^{k+1}} dX =\oint \dfrac{(1+z)^{mk+m}}{z^k}\dfrac{1+z-mz}{(1+z)^{m+1}}dz =\oint \dfrac{(1+z)^{mk-1} (1+z-mz)}{z^k}dz \\ \nonumber 
	&= \binom{mk}{k-1} - m \binom{mk-1}{k-2} = \dfrac{m(mk-1)!}{(k-1)!(mk-k+1)!}.
	\end{align}
Therefore, the $X$ expansion of $\omega_{0,1}$ is given by:
	\begin{equation}
	\omega_{0,1} = \sum\limits_{k=1}^{\infty} k b^\circ_{0,k} X^{k-1}dX = d \sum\limits_{k=1}^{\infty} b^\circ_{0,k} X^k.
	\end{equation}
\end{proof}

\begin{proposition} 	The expansion of $\omega_{0,2}$ with the subtracted singularity at the diagonal is given by:
	\begin{equation}
	\frac{dz_1dz_2}{(z_1-z_2)^2} - \frac{dX_1dX_2}{(X_1-X_2)^2}= d_1 d_2 \sum_{\mu_1,\mu_2=1}^\infty b^\circ_{0,\mu_1,\mu_2} X_1^{\mu_1} X_2^{\mu_2}. 
	\end{equation}
\end{proposition}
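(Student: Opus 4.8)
\emph{Strategy overview.} The plan is to reduce the claimed equality of bidifferentials to an identity of Taylor coefficients, and then to a single binomial summation. First I would rewrite the left-hand side as a mixed second logarithmic derivative. Since $X_i=z_i/(1+z_i)^m$, a direct calculus computation gives $\frac{dX_1dX_2}{(X_1-X_2)^2}=\partial_{z_1}\partial_{z_2}\log(X_1-X_2)\,dz_1dz_2$ and $\frac{dz_1dz_2}{(z_1-z_2)^2}=\partial_{z_1}\partial_{z_2}\log(z_1-z_2)\,dz_1dz_2$, whence
\[
\frac{dz_1dz_2}{(z_1-z_2)^2}-\frac{dX_1dX_2}{(X_1-X_2)^2}=\partial_{z_1}\partial_{z_2}\log\frac{z_1-z_2}{X_1-X_2}\,dz_1dz_2 .
\]
The function $\log\frac{z_1-z_2}{X_1-X_2}$ is holomorphic near $z_1=z_2=0$ (the apparent diagonal singularity cancels, with value $1$ for the ratio), hence is a power series in $X_1,X_2$. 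Because $d_1d_2\sum b^\circ_{0,\mu_1\mu_2}X_1^{\mu_1}X_2^{\mu_2}=\partial_{z_1}\partial_{z_2}\big(\sum b^\circ_{0,\mu_1\mu_2}X_1^{\mu_1}X_2^{\mu_2}\big)\,dz_1dz_2$ and $\partial_{z_1}\partial_{z_2}$ annihilates exactly the single-variable parts, the proposition is equivalent to the statement that the generating series equals the mixed part (the monomials $X_1^{\mu_1}X_2^{\mu_2}$ with $\mu_1,\mu_2\ge1$) of $\log\frac{z_1-z_2}{X_1-X_2}$. As the series is already purely mixed, this amounts to the coefficient identity $b^\circ_{0,\mu_1\mu_2}=[X_1^{\mu_1}X_2^{\mu_2}]\log\frac{z_1-z_2}{X_1-X_2}$ for all $\mu_1,\mu_2\ge1$.

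\emph{Residue extraction.} Next I would extract this coefficient by residues. Using $[X^{\mu}]f=\tfrac1\mu[X^{\mu-1}]\partial_X f$ in each variable and converting $\Res_{X_i=0}$ to $\Res_{z_i=0}$ (the Jacobians $X'(z_i)$ cancel against $\partial_{X_i}=\tfrac1{X'(z_i)}\partial_{z_i}$), one finds
\[
b^\circ_{0,\mu_1\mu_2}=\frac{1}{\mu_1\mu_2}\Res_{z_1=0}\Res_{z_2=0}\frac{1}{X_1^{\mu_1}X_2^{\mu_2}}\left[\frac{1}{(z_1-z_2)^2}-\frac{X'(z_1)X'(z_2)}{(X_1-X_2)^2}\right]dz_1\,dz_2 .
\]
The key simplification is that the second term does not contribute: expanding $\frac{1}{(X_1-X_2)^2}$ around $X_2=0$ (legitimate in the iterated residue, where $|X_2|\ll|X_1|$) yields $\Res_{z_2=0}\frac{X'(z_2)}{X_2^{\mu_2}(X_1-X_2)^2}dz_2=\mu_2 X_1^{-\mu_2-1}$, after which the outer residue becomes $\Res_{X_1=0}X_1^{-\mu_1-\mu_2-1}dX_1=0$. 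Only the Bergman term survives; substituting $X_i^{-\mu_i}=(1+z_i)^{m\mu_i}z_i^{-\mu_i}$ and expanding $\frac{1}{(z_1-z_2)^2}=\sum_{j\ge0}(j+1)z_2^j z_1^{-j-2}$ reduces the claim to the combinatorial identity
\[
\frac{1}{\mu_1\mu_2}\sum_{j=0}^{\mu_2-1}(j+1)\binom{m\mu_2}{\mu_2-1-j}\binom{m\mu_1}{\mu_1+j+1}=\frac{m}{(m-1)(\mu_1+\mu_2)}\binom{m\mu_1-1}{\mu_1}\binom{m\mu_2-1}{\mu_2} .
\]

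\emph{Main obstacle.} Proving this binomial identity is the hard part. A naive appeal to Vandermonde's convolution is misleading: extending the sum to its full range (equivalently $0\le i\le m\mu_2$ under $i=\mu_2-1-j$) makes it collapse to $\mu_2\big[\binom{m(\mu_1+\mu_2)-1}{\mu_1+\mu_2}-(m-1)\binom{m(\mu_1+\mu_2)-1}{\mu_1+\mu_2-1}\big]=0$, using the elementary relation $\binom{mM-1}{M}=(m-1)\binom{mM-1}{M-1}$ with $M=\mu_1+\mu_2$; hence the whole value of the truncated sum resides in the tail that the truncation removes. In residue language the contribution localizes at $z_2=\infty$, the residue at $z_2=z_1$ vanishing after the $z_1$-residue by the very same collapse. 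I would therefore evaluate the truncated sum directly, either through the contour deformation $\Res_{z_2=0}=-\Res_{z_2=z_1}-\Res_{z_2=\infty}$, or by recognizing it as a terminating hypergeometric series summable by a Saalsch\"utz- or Vandermonde-type evaluation (or, absent an elegant closed form, by Zeilberger's algorithm), with the right-hand side fixed by the explicit genus-$0$ formula \eqref{eq:bms02}. The single-variable residue manipulations already carried out for $\omega_{0,1}$ provide the template for the computations involved.
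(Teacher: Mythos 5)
Your reduction is sound up to the last step: passing to $\log\frac{z_1-z_2}{X_1-X_2}$, observing that only the mixed coefficients matter, extracting them by iterated residues, and noting that the $\frac{dX_1dX_2}{(X_1-X_2)^2}$ term contributes nothing are all correct and consistent with what the paper does (its equation for $\log(z_1-z_2)$ is exactly your reformulation). The genuine gap is that the proof stops at the point where all the content lies: the truncated convolution
\begin{equation*}
\frac{1}{\mu_1\mu_2}\sum_{j=0}^{\mu_2-1}(j+1)\binom{m\mu_2}{\mu_2-1-j}\binom{m\mu_1}{\mu_1+j+1}
=\frac{m}{(m-1)(\mu_1+\mu_2)}\binom{m\mu_1-1}{\mu_1}\binom{m\mu_2-1}{\mu_2}
\end{equation*}
is asserted, not proven. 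You yourself show that the obvious Vandermonde completion collapses to zero, so the identity is carried entirely by the boundary of the truncation; the three strategies you then list (deforming to $\Res_{z_2=\infty}$, a Saalsch\"utz-type evaluation, Zeilberger) are left as options rather than carried out, and the first of them, if you pursue it, lands you on another truncated convolution of the same type rather than on a closed form. As written, the proposition is therefore not established.

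The paper avoids this difficulty with one extra move that is worth internalizing: before extracting coefficients it applies the Euler operator $E=X_1\partial_{X_1}+X_2\partial_{X_2}$ to $\log(z_1-z_2)$. This kills the $\log(X_1-X_2)$ and single-variable terms up to constants, multiplies the target coefficient by the harmless factor $k_1+k_2$, and --- crucially --- turns the transcendental function $\log(z_1-z_2)$ into the rational function $\frac{(1+z_1)(1+z_2)-mz_1z_2}{(1+z_1-mz_1)(1+z_2-mz_2)}$, in which the variables separate. The residue extraction then factorizes into products of single binomial coefficients, $\binom{mk_1}{k_1}\binom{mk_2}{k_2}-m\binom{mk_1-1}{k_1-1}\binom{mk_2-1}{k_2-1}$, and matching this against $(k_1+k_2)\,b^\circ_{0,k_1k_2}$ is a one-line application of $\binom{mk-1}{k}=(m-1)\binom{mk-1}{k-1}$. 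No two-dimensional convolution ever appears. Either adopt that device or supply a complete evaluation of your sum; without one of the two, the argument is incomplete.
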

\begin{proof}
	Let us denote $z_i = z(X_i), i=1,2$. It is sufficient to prove that
	\begin{equation}
	\label{eq:log}
	\mathrm{log}(z_1-z_2) = \mathrm{log}(X_1-X_2)+C(X_1)+C(X_2) + \sum\limits_{k_1,k_2\geq 0} b^\circ_{0,k_1^1k_2^1} X^{k_1}X^{k_2}.
	\end{equation}
	Let us apply the Euler operator $E := X_1\dfrac{\partial}{\partial X_1} + X_2\dfrac{\partial}{\partial X_2}$ to the both sides of \eqref{eq:log}.
	Recall that
	\begin{align}
	X(z) = \frac{z}{(1+z)^m} \qquad \text{and} \qquad
	\frac{d}{dX} = \frac{(1+z)^{m+1}}{1+z-mz}\frac{d}{dz}.
	\end{align}
We have:	
	\begin{align}
	E\,\mathrm{log}(z_1-z_2) & = \frac{z_1}{(1+z_1)^m}\frac{(1+z_1)^{m+1}}{1+z_1-mz_1}\frac{1}{z_1-z_2} - \frac{z_2}{(1+z_2)^m}\frac{(1+z_2)^{m+1}}{1+z_2-mz_2}\frac{1}{z_1-z_2}\\
	\notag &=
	\frac{(z_1+z_1^2)(1+z_2-mz_2) - (z_2+z_2^2)(1+z_1-mz_1)}{(z_1-z_2)(1+z_1-mz_1)(1+z_2-mz_2)} \\
	\notag & =
	\frac{(1+z_1)(1+z_2)-mz_1z_2}{(1+z_1-mz_1)(1+z_2-mz_2)}.
	\end{align}
	Now we compute the coefficient $[X_1^{k_1}X_2^{k_2}]E\,\mathrm{log}(z_1-z_2)$ as a residue:
	\begin{align}
	\label{eq:Elogz}
	& [X_1^{k_1}X_2^{k_2}]E\,\mathrm{log}(z_1-z_2) \\ \notag & 
	=  \oiint  \frac{(1+z_1)(1+z_2)-mz_1z_2}{(1+z_1-mz_1)(1+z_2-mz_2)}\frac{(1+z_1)^{m(k_1+1)}(1+z_2)^{m(k_2+1)}}{z_1^{k_1+1}z_2^{k_2+1}}
	\\ \notag 
	& \phantom{=  \oiint  {}} \times \frac{(1+z_1-mz_1)(1+z_2-mz_2)}{(1+z_1)^{m+1}(1+z_2)^{m+1}} dz_1dz_2
	\\ \notag &
	=\oiint \frac{((1+z_1)(1+z_2)-mz_1z_2)(1+z_1)^{mk_1-1}(1+z_2)^{mk_2-1}}{z_1^{k_1+1}z_2^{k_2+1}}dz_1dz_2
	\\ \notag &
	=\binom{mk_1}{k_1}\binom{mk_2}{k_2} - m\binom{mk_1-1}{k_1-1}\binom{mk_2-1}{k_2-1}
	\end{align}
We also have
	\begin{equation}
	\label{eq:Ebms}
	[X_1^{k_1}X_2^{k_2}]E\,\sum\limits_{k_1,k_2\geq 0} b^\circ_{0,k_1^1k_2^1} X^{k_1}X^{k_2} = (k_1+k_2)b^\circ_{0,k_1^1k_2^1}
	\end{equation}
	From \eqref{eq:Elogz} and \eqref{eq:Ebms}, taking into account \eqref{eq:bms02}, we immediately see that equation~\eqref{eq:log} holds.
\end{proof}

\section{Functions on the spectral curve}\label{sec:specfuncs}

Recall that the spectral curve is defined as the source curve of the covering $x\colon \mathbb{C}P^1\to \mathbb{C}P^1$, $z\mapsto x(z)$, where
\begin{align}
\label{eq:yz}
x&=\dfrac{(1+z)^m}{z},
\end{align}
and we are interested in the expansions of functions on the curve in the variable
\begin{equation}
\label{eq:Xz}
X=\dfrac{1}{x}=\dfrac{z}{(1+z)^m}.
\end{equation}
We define the $\xi$-functions on the curve by explicit formulas:
\begin{equation}
\xi_i := \dfrac{z^i}{(1+z)^{m-1}\,(-1+(m-1)z)}, \qquad i=0\dots m-1.
\label{Def:xi}
\end{equation}

Let $D$ denote the operator $\frac{d}{dx}$. Consider the space of functions on the curve spanned by $\xi$-functions and their $x$-derivatives up to the order $d\geq 0$:
\begin{equation}
\Xi^d := \left\langle \xi_0,\dots\xi_{m-1},D\xi_0,\dots,D\xi_{m-1},\dots,D^d\xi_0,\dots,D^d\xi_{m-1}\right\rangle.
\end{equation}
We want to characterize the possible series expansions of functions from $\Xi^d$ in the variable $X$ at $X\rightarrow 0$. To this end, we have the following proposition.

\begin{proposition}\label{prop:xiexp}
	Let $P(k)$ be an arbitrary polynomial in $k$ of degree~$\leq m(d+1)-1$. Then there exists a unique function $\xi \in \Xi^d$ such that for all $k\in\mathbb{Z}_{\geq 0}$
	\begin{equation} \label{eq:xiexp}
	[X^k]\xi=\dfrac{(mk-m)!}{k!\,(mk-k-1)!} \; \dfrac{P(k)}{\displaystyle\prod_{\substack{m< j < m(d+1) \\ m\nmid j}} \left(k-\dfrac{j}{m}\right)}.
	\end{equation}
	Here $[X^k]\xi$ denotes the coefficient of $X^k$ in the series expansion of $\xi$ in the variable $X$ at $X\to 0$.
\end{proposition}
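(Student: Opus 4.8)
The plan is to reduce the statement to the bijectivity of a single linear map. For $\xi\in\Xi^d$ set
\[
\Phi(\xi)(k)\;:=\;\frac{k!\,(mk-k-1)!}{(mk-m)!}\;\Biggl(\,\prod_{\substack{m<j<m(d+1)\\ m\nmid j}}\!\Bigl(k-\tfrac{j}{m}\Bigr)\Biggr)\,[X^k]\xi ,
\]
so that \eqref{eq:xiexp} asserts exactly that $\Phi$ is a bijection from $\Xi^d$ onto the space of polynomials in $k$ of degree $\le m(d+1)-1$; the latter space has dimension $m(d+1)$, and $\Xi^d$ is spanned by the $m(d+1)$ functions $D^p\xi_i$, $0\le i\le m-1$, $0\le p\le d$. (Here and below I assume $m\ge 2$; the case $m=1$ is degenerate and checked directly.) First I would compute the $X$-expansions of these spanning functions by a residue computation in $z$. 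Using $dX=\frac{1-(m-1)z}{(1+z)^{m+1}}\,dz$ and the cancellation $\frac{1-(m-1)z}{-1+(m-1)z}=-1$ one finds $[X^k]\xi_i=-\binom{m(k-1)}{k-i}$; since $D=-X^2\frac{d}{dX}$ acts on Taylor coefficients by $[X^k](Df)=-(k-1)[X^{k-1}]f$, iteration gives
\[
[X^k](D^p\xi_i)=(-1)^{p+1}(k-1)(k-2)\cdots(k-p)\,\binom{m(k-p-1)}{k-p-i}.
\]

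Next I would feed these into $\Phi$. Writing the binomial as a ratio of factorials, dividing by the prefactor, and multiplying by the product over $j$: the Pochhammer factor $(k-1)\cdots(k-p)$ cancels the integer-valued linear factors created by the factorial ratio, the product over $j$ clears the remaining fractional denominator, and a genuine polynomial $P_{i,p}:=\Phi(D^p\xi_i)$ survives; bookkeeping the leftover linear factors gives $\deg P_{i,p}=(m-1)(d+1)+p$, which is $\le m(d+1)-1$ with equality at $p=d$. Thus $\Phi$ is well defined with image in the correct space. Injectivity of $\Phi$ is then immediate: for every integer $k\ge0$ the prefactor $k!\,(mk-k-1)!/(mk-m)!$ is finite and nonzero (with the obvious $\Gamma$-function reading at $k=0$) and each factor $k-j/m$ is nonzero because $j/m\notin\mathbb{Z}$, so $\Phi(\xi)=0$ forces $[X^k]\xi=0$ for all $k\ge0$, and, $z=0$ being a regular point of every element of $\Xi^d$, this means $\xi=0$. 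Hence $\dim\Xi^d\le m(d+1)$, and to finish it suffices to prove $\dim\Xi^d= m(d+1)$, i.e. that the functions $D^p\xi_i$ are linearly independent.

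This linear independence is the heart of the proof and the step I expect to be the main obstacle. I would attach to each $D^p\xi_i$ two invariants on $\mathbb{C}P^1$: its order of vanishing at $z=\infty$, namely $N_{i,p}=(m-i)+p(m-1)$, and its pole order at the simple ramification point $z=a:=1/(m-1)$, namely $M_{i,p}=2p+1$ (for the latter one works in a local coordinate $\eta$ with $x-x(a)=\eta^2$, where $D=\frac{1}{2\eta}\frac{d}{d\eta}$ preserves parity in $\eta$ and sends the simple pole of $\xi_i$ to poles of odd order only). The pairs $(N_{i,p},M_{i,p})$ are pairwise distinct, yet neither filtration separates all basis elements by itself: the expansion at $z=\infty$ separates the index $i$ for each fixed $p$ but suffers the coincidences $N_{0,p}=N_{m-1,p+1}$, while the expansion at $z=a$ (and likewise at $z=-1$) is rank-one in $i$ and only detects $p$. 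Disentangling the two indices is exactly the difficulty.

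Concretely, given a relation $\sum a_{i,p}D^p\xi_i=0$, expanding at $z=\infty$ and reading off coefficients of $z^{-N}$ kills every $a_{i,p}$ whose order $N_{i,p}$ occurs only once, leaving only $a_{0,p}$ ($p\le d-1$) and $a_{m-1,p}$ ($p\ge1$), together with $d$ relations — coming from the coincidence orders — that pair $a_{0,p}$ with $a_{m-1,p+1}$. I would then clear these survivors by descending induction on $p$, alternating the two expansions: the top pole order $2d+1$ at $z=a$ forces $a_{m-1,d}=0$; the pairing relation then gives $a_{0,d-1}=0$; the pole order $2d-1$ gives $a_{m-1,d-1}=0$; and so on down to $p=0$, whence all coefficients vanish. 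Once independence is established, $\Phi$ is an isomorphism, and the existence and uniqueness asserted in the proposition follow.
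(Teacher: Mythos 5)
Your reduction of the statement to the bijectivity of the linear map $\Phi$, the residue computation $[X^k]\xi_i=-\binom{mk-m}{k-i}$ (your sign is in fact the correct one; the paper's displayed formula has a sign typo that is harmless since $P$ is arbitrary), the rule $[X^k](Df)=-(k-1)[X^{k-1}]f$, the verification that $\Phi(D^p\xi_i)$ is a polynomial of degree $(m-1)(d+1)+p\le m(d+1)-1$, and the injectivity of $\Phi$ are all correct. The gap is exactly where you predicted it: the linear independence of the $D^p\xi_i$. Your opening move --- ``expanding at $z=\infty$ and reading off coefficients of $z^{-N}$ kills every $a_{i,p}$ whose order $N_{i,p}$ occurs only once'' --- is not valid. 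Each $D^p\xi_i$ contributes to the coefficient of $z^{-N}$ for \emph{every} $N\ge N_{i,p}$, so the leading-order elimination must proceed in increasing order of $N$ and stalls at the first coincidence $N=m$, where $N_{0,0}=N_{m-1,1}$: from that point on every coefficient equation is contaminated by the unresolved pair $(a_{0,0},a_{m-1,1})$, and the coefficients with unique but larger orders (all $a_{i,p}$ with $p\ge 1$, $1\le i\le m-2$, and $a_{0,d}$) are not forced to vanish by this expansion as you describe it. The second stage then collapses as well: to get $a_{m-1,d}=0$ from the $\eta^{-(2d+1)}$-coefficient at $z=1/(m-1)$ you need $a_{i,d}=0$ for all $i\ne m-1$, which is precisely what the stalled first stage was supposed to supply; and the expansion at $z=1/(m-1)$ cannot supply it on its own, since only the coefficients of $\eta^{-(2p+1)}$ and $\eta^{-2p}$ are uncontaminated by level $p-1$, i.e.\ you get at most two clean relations on the $m$ unknowns $a_{0,p},\dots,a_{m-1,p}$ per level --- not enough for $m\ge 3$. (Bringing in the third special point $z=-1$ does not obviously repair this either: for $\sum_i a_i z^i$ proportional to $(1+z)^{m-1}$ the pole order of $D\bigl(\sum_i a_i\xi_i\bigr)$ at $z=-1$ drops well below the generic value, so that block of relations is degenerate in $i$ too.)

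The paper sidesteps the independence question entirely by proving \emph{surjectivity} directly, by induction on $d$: it computes $[X^k](D\xi)$ for $\xi$ already of the form \eqref{eq:xiexp} and observes that the two explicit polynomials multiplying $P_1(k)$ and $P_2(k-1)$ in the resulting expression for the numerator are coprime of degrees $m-1$ and $m$, so every polynomial of degree $\le m(d+2)-1$ is realized. A surjection onto an $m(d+1)$-dimensional space from a space spanned by $m(d+1)$ elements is automatically a bijection, so uniqueness --- and the linear independence you are struggling with --- come out as corollaries rather than inputs. Since you have already computed the polynomials $\Phi(D^p\xi_i)$ explicitly, the cleanest repair of your write-up is to replace the independence argument by a proof that these $m(d+1)$ polynomials span the target space (e.g.\ by the paper's coprimality induction, or by exhibiting a triangular structure among the $\Phi(D^p\xi_i)$ themselves); the two local expansions at $z=\infty$ and $z=1/(m-1)$, as organized in your sketch, do not close the argument.
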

\begin{proof}
First of all, let us find the series expansion of $\xi_i$. Note that
\begin{equation}
\xi_i=\dfrac{z^{i-2}}{dx/dz}=\dfrac{z^{-2}}{i+1}\dfrac{d}{dx}z^{i+1}\qquad i=0\dots m-1.
\end{equation}
We have:
\begin{align} \label{eq:xizeroexp}
[X^k]\xi_i &= \oint X^{-k-1} \xi_i dX = -\oint x^{k-1} \dfrac{z^{-2}}{i+1}\dfrac{d}{dx}z^{i+1} dx = -\oint z^{i-2} \dfrac{(1+z)^{m(k-1)}}{z^{k-1}} d z \\ \nonumber
& =\binom{mk-m}{k-i}.
\end{align}

We prove the existence part of the proposition by induction (we will address the uniqueness part separately). From \eqref{eq:xizeroexp} it is easy to see that the base of induction holds, i.~e. that the existence  statement is true for $d=0$.

Let us prove the induction step. Apply operator $D=-X^2\dfrac{d}{d X}$ 
to a series $\xi$ of the form \eqref{eq:xiexp}, where $P(k)$ is an arbitrary polynomial of degree~$\deg(P)\leq m(d+1)-1$. We have:
\begin{align}\label{eq:Dxiexp}
& [X^k]D\xi=-(k-1)\dfrac{(mk-2m)!}{(k-1)!\,(mk-m-k)!} \; \dfrac{P(k-1)}{\displaystyle\prod_{\substack{m< j < m(d+1) \\ m\nmid j}} \left(k-1-\dfrac{j}{m}\right)}\\ \nonumber
&=-\dfrac{(mk-m)!}{k!\,(mk-k-1)!} \; \dfrac{P(k-1)\; k\; (mk-k-1)(mk-k-2)\cdots(mk-k-m+1)}{\displaystyle m^m\; \prod_{m < i < 2m } \left(k-\dfrac{i}{m}\right) \prod_{\substack{m< j < m(d+1) \\ m\nmid j}} \left(k-\dfrac{m+j}{m}\right)}\\ \nonumber
&=-\dfrac{(mk-m)!}{k!\,(mk-k-1)!} \; \dfrac{P(k-1)\; k\;(m-1)^{m-1} \displaystyle\prod_{i=1}^{m-1} \left(k-\dfrac{i}{m-1} \right)}{m^m\; \displaystyle\prod_{\substack{m< j < m(d+2) \\ m\nmid j}} \left(k-\dfrac{j}{m}\right)}.
\end{align}

Recall that $\Xi^{d+1}=\Xi^d + D\Xi^d$. Consider two functions $\xi_1,\xi_2\in\Xi^d$. From the induction hypothesis we have:
\begin{align}
[X^k]\xi_1&=\dfrac{(mk-m)!}{k!\,(mk-k-1)!} \; \dfrac{P_1(k)}{\displaystyle\prod_{\substack{m< j < m(d+1) \\ m\nmid j}} \left(k-\dfrac{j}{m}\right)},\\
[X^k]\xi_2&=\dfrac{(mk-m)!}{k!\,(mk-k-1)!} \; \dfrac{P_2(k)}{\displaystyle\prod_{\substack{m< j < m(d+1) \\ m\nmid j}} \left(k-\dfrac{j}{m}\right)},
\end{align}
where $P_1(k)$ and $P_2(k)$ can be any polynomials in $k$ of degree~$\leq m(d+1)-1$. Equation~\eqref{eq:Dxiexp} implies that 
\begin{equation}
[X^k](\xi_1+D\xi_2)=\dfrac{(mk-m)!}{k!\,(mk-k-1)!} \; \dfrac{Q(k)}{\displaystyle\prod_{\substack{m< j < m(d+2) \\ m\nmid j}} \left(k-\dfrac{j}{m}\right)},
\end{equation}
where
\begin{align}\label{eq:QP1P2}
Q(k) =\ & P_1(k)\; \cdot \; \prod_{i=(d+1)m+1}^{(d+2)m-1} \left(k-\dfrac{i}{m}\right) \\ \notag & + P_2(k-1)\; \cdot \;  (-m^{-m} (m-1)^{m-1})\prod_{i=0}^{m-1} \left(k-\dfrac{i}{m-1}\right) .
\end{align}
Since $\prod_{i=(d+1)m+1}^{(d+2)m-1} \left(k-\frac{i}{m}\right)$ and $\prod_{i=0}^{m-1} \left(k-\frac{i}{m-1}\right)$ are coprime polynomials of degrees~$m-1$ and~$m$ respectively, we can choose polynomials $P_1$ and $P_2$ of degree~$\leq m(d+1)-1$ to represent via equation~\eqref{eq:QP1P2} any polynomial $Q$ of degree~$\leq m(d+2)-1$. This proves the induction step and thus the existence part of the proposition.

In order to prove the uniqueness claim of the proposition, it is sufficient to observe that $\dim \Xi^d = m(d+1)$ is equal to  the dimension of the space of polynomials in $k$ of degree~$\leq m(d+1)-1$. This completes the proof of the proposition.
\end{proof}

\section{Semi-infinite wedge formalism} \label{sec:semiinf}

In this section we briefly recall the notion of semi-infinite wedge formalism; it is nowadays a standard tool in Hurwitz theory. For a more complete introduction see e.g.~\cite{Joh}.

\subsection{Basic definitions}

\begin{definition}
The Lie algebra $A_\infty$ is the $\C$-vector space of matrices $(A_{i,j})_{i,j\in \Z+\frac{1}{2}}$ with
only finitely many non-zero diagonals (that is, $A_{i,j}$ is not equal to zero only for finitely
many possible values of $i-j$), together with the commutator bracket. In this algebra we consider the following elements:
\begin{enumerate}
	\item The standard basis is the set $\{E_{i,j}\,|\, i,j \in \Z+\frac{1}{2}\}$ such that $(E_{i,j})_{k,l} = \delta_{i,k}\delta_{j,l}$;
	\item The diagonal elements $\mathcal{F}_n = \sum_{k\in\Z+\frac{1}{2}} k^n E_{k,k}$. In particular the element $C\coloneqq\mathcal{F}_0$ is called the {\it charge operator} and the element $E \coloneqq \mathcal{F}_1$ is called the {\it energy operator} (an element $A\in A_\infty$ has energy $e\in\Z$ if $[A,E]~=~eA$);
	\item For any non-zero integer $n$, the energy $n$ element $\alpha_n = \sum\limits_{k\in\Z+\frac{1}{2}} E_{k-n,k}.$
\end{enumerate}
\end{definition}

We 
construct a certain projective representation of this algebra, called the semi-infinite wedge space.

Let $V$ be an infinite-dimensional complex vector space with a basis labeled by half-integers. Denote the basis vector labeled by $m/2$ by $\underline{m/2}$, so $V = \bigoplus_{i \in \Z + \frac{1}{2}} \C \underline{i}$.

\begin{definition}\label{DEFSEMIINF}
The semi-infinite wedge space $\bigwedge^{\frac{\infty}{2}}(V) = \mathcal{V}$ is defined to be the span of all of the semi-infinite wedge products of the form
\begin{equation}\label{wedgeProduct}
\underline{i_1} \wedge \underline{i_2} \wedge \cdots
\end{equation}
for any decreasing sequence of half-integers $i_k$ such that there is an integer $c$ with $i_k + k - \frac{1}{2} = c$ for $k$ sufficiently large. The constant $c$ is called the \textit{charge}. We give $\mathcal{V}$ an inner product $(\cdot,\cdot)$ declaring its basis elements to be orthonormal.
\end{definition}

\begin{remark}
By definition \ref{DEFSEMIINF} the charge-zero subspace $\mathcal{V}_0$ of $\mathcal{V}$ is spanned by semi-infinite wedge products of the form 
\begin{equation}
v_\lambda = \underline{\lambda_1 - \frac{1}{2}} \wedge \underline{\lambda_2 - \frac{3}{2}} \wedge \cdots
\end{equation}
for some integer partition~$\lambda$. Hence we can identify integer partitions with the basis of this space:
\begin{equation}
\mathcal{V}_0 = \bigoplus_{n \in \mathbb{N} } \bigoplus_{\lambda\,  \vdash\, n} \C v_{\lambda}
\end{equation}
\end{remark}

The empty partition $\emptyset$ plays a special role.
We call 
\begin{equation}
v_{\emptyset} = \underline{-\frac{1}{2}} \wedge \underline{-\frac{3}{2}} \wedge \cdots
\end{equation}
 the vacuum vector and we denote it by $|0\rangle$. Similarly we call the covacuum vector its dual with respect to the scalar product $(\cdot,\cdot)$ and we denote it by $\langle0|$.

\begin{definition}
 The \emph{vacuum expectation value} or \emph{disconnected correlator} $\langle \mathcal{P}\rangle$ of an operator~$\mathcal{P}$ acting on $\mathcal{V}_0$ is defined to be:
\begin{equation}
\langle \mathcal{P}\rangle \coloneqq (|0\rangle , \mathcal{P} |0\rangle) \eqqcolon \langle 0 |\mathcal{P}|0 \rangle
\end{equation}
\end{definition}

The \emph{connected correlators} $\langle \mathcal{P}\rangle^\circ$ are defined through the disconnected ones in the usual way with the help of the inclusion-exclusion formula.

\begin{definition}
Define a projective representation of $A_\infty$ on $\mathcal{V}_0$ as follows: for $i\neq j$ or
$i = j > 0$, $E_{i,j}$ checks whether $v_\lambda$ contains $\underline{j}$ as a factor and replaces it by $\underline{i}$ if it does. If $i = j < 0$, $E_{i,j}v_\lambda = -v_\lambda$ if $v_\lambda$ does not contain $\underline{j}$. In all other cases it gives zero.
\end{definition}

Equivalently, this gives a representation of the central extension $A_{\infty}+\C\mathrm{Id}$,
with commutation relations between the basis elements given by 
\begin{equation}
\label{E:comm}
[E_{a,b},E_{c,d}] = \delta_{b,c}E_{a,d} - \delta_{a,d}E_{c,b}+\delta_{b,c}\delta_{a,d}(\delta_{b>0}-\delta_{d>0})\mathrm{Id}.
\end{equation}

The operator $E_{i,j}$ has energy $j-i$, hence all the $\mathcal{F}_n$'s have zero energy. Operators with positive energy annihilate the vacuum while negative energy operators are annihilated by the covacuum. 

Note that $[\alpha_k , \alpha_l] = k\delta_{k+l,0}$.


Also let us denote by $(\Delta\,f)(l) = f(l)-f(l-1)$ the difference operator acting on functions of $l$.
Using the commutation rule (\ref{E:comm}), it is useful to obtain the following: 
\begin{lemma}
\label{l:Ecomm}
For any integer $a$
\begin{equation}
\left[\sum\limits_{l\in\Z+\frac{1}{2}} E_{l-1,l},\sum\limits_{l\in\Z+\frac{1}{2}} f(l)E_{l+a,l} \right] =
\sum\limits_{l\in\Z+\frac{1}{2}} (\Delta\,f)(l) E_{l+a-1,l} + \delta_{a,1}f_{-1/2}\mathrm{Id}.
\end{equation}
\end{lemma}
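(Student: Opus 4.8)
The plan is to expand the commutator by bilinearity into a double sum of elementary brackets $[E_{l-1,l},E_{k+a,k}]$ weighted by $f(k)$, apply the commutation relation \eqref{E:comm} to each such bracket, and then reorganize the resulting terms. Substituting the first operator $E_{l-1,l}$ and the second operator $E_{k+a,k}$ into \eqref{E:comm} gives
\begin{equation}
[E_{l-1,l},E_{k+a,k}] = \delta_{l,k+a}E_{l-1,k} - \delta_{l-1,k}E_{k+a,l} + \delta_{l,k+a}\delta_{l-1,k}(\delta_{l>0}-\delta_{k>0})\mathrm{Id},
\end{equation}
so it remains to sum each of the three pieces over $l,k\in\Z+\frac12$ against $f(k)$.

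For the first piece the Kronecker delta forces $l=k+a$, collapsing the $l$-sum and producing $\sum_k f(k)E_{k+a-1,k}$. For the second piece the delta forces $l=k+1$, giving $-\sum_k f(k)E_{k+a,k+1}$; reindexing $k\mapsto k-1$ turns this into $-\sum_k f(k-1)E_{k+a-1,k}$. Adding the two, the operator $E_{k+a-1,k}$ appears with coefficient $f(k)-f(k-1)=(\Delta f)(k)$, which reproduces the non-central term $\sum_l (\Delta f)(l)E_{l+a-1,l}$ on the right-hand side.

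The step I would carry out most carefully is the central (identity) contribution, which is the only place any real bookkeeping is required. The product $\delta_{l,k+a}\delta_{l-1,k}$ is nonzero only when $l=k+a$ and $l=k+1$ hold simultaneously, which forces $a=1$ and $l=k+1$; this is the origin of the factor $\delta_{a,1}$. Under these constraints the sign factor becomes $\delta_{k+1>0}-\delta_{k>0}$, and since $k\in\Z+\frac12$ this difference equals $1$ exactly when $k=-\tfrac12$ and vanishes otherwise. Hence the central sum collapses to $\delta_{a,1}f(-\tfrac12)\mathrm{Id}=\delta_{a,1}f_{-1/2}\mathrm{Id}$, matching the remaining term. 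There is no genuine obstacle here: the computation is entirely direct, and the care lies only in tracking the two simultaneous delta constraints and in correctly evaluating $\delta_{l>0}-\delta_{k>0}$ on half-integers.
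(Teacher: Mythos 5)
Your proof is correct and complete, and it is exactly the direct computation from the commutation relation \eqref{E:comm} that the paper has in mind (the paper states the lemma as an immediate consequence of \eqref{E:comm} and omits the details). The delta bookkeeping, the reindexing $k\mapsto k-1$ producing $(\Delta f)(k)=f(k)-f(k-1)$, and the identification of $k=-\tfrac12$ as the unique half-integer with $\delta_{k+1>0}-\delta_{k>0}=1$ are all handled correctly.
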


\subsection{The BMS numbers}
As we mentioned in the introduction (see \eqref{eq:tau1}, \eqref{eq:tau2}) the BMS numbers have the following expression in terms of fermionic Fock space operators:

 \begin{equation}
\sum\limits_{g=0}^\infty\sum\limits_{\mu} \frac{\hbar^{2g-2+l(\mu)+|\mu|}}{|\mathrm{Aut}(\mu)|}b^\circ_{g,\mu}\prod\limits_{i=1}^{l(\mu)}p_{\mu_i} 
= \sum\limits_{|\mu|}\sum\limits_{\mu\vdash |\mu|}\prod\limits_{i=1}^{|\mu|} (1+\hbar\mathrm{cr}_i^\mu)^m \frac{\dim\mu}{|\mu|!}s_\mu(p_1,p_2,\ldots)
\end{equation} 

Let us denote by $D(\hbar)$ an operator acting on the basis of the charge zero sector of the Fock space with the eigenvalues equal to the generating series for the elementary symmetric polynomials $\sigma_k(x_1,\ldots,x_n)$:
\begin{equation}
D(\hbar)v_\lambda = \sum\limits_{k=0}^\infty \sigma_k(c(\lambda))\hbar^k v_\lambda = \prod\limits_{i,j\in\lambda} (1+\hbar(j-i))v_\lambda.
\end{equation}
Here $c(\lambda)$ is a vector of contents of the diagram $\lambda$ and in the last identity we used the standard expression for the generating series of the elementary symmetric polynomials:
\begin{equation}
\sum\limits_{k=0}^{\infty} \sigma_k(x_1,\ldots,x_n) t^k = \prod\limits_{i=1}^n (1+tx_i).
\end{equation}
Now we can rewrite the BMS numbers  as the following vacuum expectation value:
  
\begin{align}
\sum\limits_{g=0}^\infty\sum\limits_{\mu} \frac{\hbar^{2g-2+l(\mu)+|\mu|}}{|\mathrm{Aut}(\mu)|}b^\circ_{g,\mu}\prod\limits_{i=1}^{l(\mu)}p_{\mu_i}= \left\langle e^{\alpha_1}D(\hbar)^m e^{\sum\limits_{i=1}^\infty\frac{\alpha_{-i}p_{i}}{i}} \right\rangle^{\circ};\\
 b^\circ_{g,\mu} = [\hbar^{2g-2+l(\mu)+|\mu|}] \left\langle e^{\alpha_1}D(\hbar)^m\prod\limits_{i=1}^n\frac{\alpha_{-\mu_i}}{\mu_i} \right\rangle^{\circ}.
\end{align}
The operators $e^{\alpha_1}$ and $D(\hbar)^m$ act identically on the vacuum. Therefore, we rewrite the last formula as
\begin{equation}\label{eq:boperexpr}
 b^\circ_{g,\mu} = [\hbar^{2g-2+l(\mu)+|\mu|}] \left\langle e^{\alpha_1}D(\hbar)^m\prod\limits_{i=1}^n\frac{\alpha_{-\mu_i}}{\mu_i}D(\hbar)^{-m}e^{-\alpha_1} \right\rangle^{\circ}.
\end{equation}

\section{The $\mathcal{A}$-operators}\label{sec:aops}

Let us define the following operators:
\begin{definition}\label{def:acheck}
	For a positive integer $k$
	\begin{equation} \label{eq:aopdef}
	\check{\mathcal{A}}(k,\hbar):= \hbar^{-k}e^{\alpha_1}D(\hbar)^m\dfrac{\alpha_{-k}}{k}D(\hbar)^{-m}e^{-\alpha_1}
	\end{equation}
\end{definition}
With the help of this definition we can reformulate \eqref{eq:boperexpr} as follows:
\begin{proposition}	We have:
	\begin{equation}\label{eq:bAexpr}
	b^\circ_{g,\mu} = [\hbar^{2g-2+l(\mu)}] \left\langle\prod\limits_{i=1}^n\check{\mathcal{A}}(\mu_i,\hbar) \right\rangle^{\circ}.
	\end{equation}
\end{proposition}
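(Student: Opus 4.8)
The plan is to derive the claimed identity directly from the operator expression~\eqref{eq:boperexpr} by a telescoping argument, with essentially no computation involved. The whole content is that the conjugating operators introduced in Definition~\ref{def:acheck} cancel between neighbouring factors.

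First I would write out the product $\prod_{i=1}^n \check{\mathcal{A}}(\mu_i,\hbar)$ using Definition~\ref{def:acheck}. Each factor carries the dressing $e^{\alpha_1}D(\hbar)^m$ on its left and the inverse dressing $D(\hbar)^{-m}e^{-\alpha_1}$ on its right. Hence between two consecutive factors one encounters $D(\hbar)^{-m}e^{-\alpha_1}\cdot e^{\alpha_1}D(\hbar)^m=\Id$, and collecting the scalar prefactors $\hbar^{-\mu_i}$ into $\hbar^{-|\mu|}$ gives the operator identity
\begin{equation}
\prod_{i=1}^n \check{\mathcal{A}}(\mu_i,\hbar) = \hbar^{-|\mu|}\, e^{\alpha_1}D(\hbar)^m \left(\prod_{i=1}^n \frac{\alpha_{-\mu_i}}{\mu_i}\right) D(\hbar)^{-m}e^{-\alpha_1}.
\end{equation}

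Then I would transfer this to the level of connected correlators. The same telescoping applies verbatim when restricted to any ordered subset $S\subseteq\{1,\dots,n\}$ of the factors, so that for the disconnected correlators one has $\left\langle \prod_{i\in S}\check{\mathcal{A}}(\mu_i,\hbar)\right\rangle = \hbar^{-\sum_{i\in S}\mu_i}\left\langle e^{\alpha_1}D(\hbar)^m \prod_{i\in S}\frac{\alpha_{-\mu_i}}{\mu_i}D(\hbar)^{-m}e^{-\alpha_1}\right\rangle$. Since the connected correlator is obtained from the disconnected ones of all subsets by the inclusion--exclusion (M\"obius) formula over set partitions of $\{1,\dots,n\}$, and since for every such set partition the accompanying power of $\hbar$ is $\hbar^{-\sum_i\mu_i}=\hbar^{-|\mu|}$ independently of the partition, the uniform factor $\hbar^{-|\mu|}$ pulls out of the connected correlator as well:
\begin{equation}
\left\langle \prod_{i=1}^n \check{\mathcal{A}}(\mu_i,\hbar)\right\rangle^{\circ} = \hbar^{-|\mu|}\left\langle e^{\alpha_1}D(\hbar)^m \left(\prod_{i=1}^n \frac{\alpha_{-\mu_i}}{\mu_i}\right) D(\hbar)^{-m}e^{-\alpha_1}\right\rangle^{\circ}.
\end{equation}

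Finally I would read off the coefficient of $\hbar^{2g-2+l(\mu)}$ on the left-hand side; by the displayed identity this equals the coefficient of $\hbar^{2g-2+l(\mu)+|\mu|}$ of the connected correlator appearing in~\eqref{eq:boperexpr}, which is exactly $b^\circ_{g,\mu}$. The only step requiring genuine care — and the only thing one might call an obstacle in an otherwise immediate argument — is the bookkeeping verifying that the common prefactor $\hbar^{-|\mu|}$ indeed survives the passage from the disconnected to the connected correlator; once this is checked the proposition follows at once.
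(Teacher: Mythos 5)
Your proposal is correct and matches the paper's treatment: the paper states this proposition as an immediate reformulation of~\eqref{eq:boperexpr} via Definition~\ref{def:acheck}, relying on exactly the telescoping cancellation of the dressings $D(\hbar)^{-m}e^{-\alpha_1}\cdot e^{\alpha_1}D(\hbar)^m=\Id$ and the collection of the prefactors $\hbar^{-\mu_i}$ into $\hbar^{-|\mu|}$, which shifts the extracted power of $\hbar$ from $2g-2+\ell(\mu)+|\mu|$ to $2g-2+\ell(\mu)$. Your extra care in checking that the uniform factor $\hbar^{-|\mu|}$ survives the inclusion--exclusion passage to connected correlators is a valid and welcome detail that the paper leaves implicit.
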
 

\subsection{Quasi-rationality of the $\check{\mathcal{A}}$-operators}
In this subsection we prove the quasi-rationality of the coefficients of the $\check{\mathcal{A}}$-operators.

\subsubsection{The $\check{\mathcal{A}}$-operator via the difference operator}
We need to prove the following technical lemmata first:
\begin{lemma}\label{L:conj1} We have:
\begin{equation}
D^m(\hbar)\alpha_{-k}D^{-m}(\hbar) =\sum\limits_{l\in\Z+\frac{1}{2}} \prod_{i=1}^k (1+\hbar(l+i-1/2))^mE_{l+k,l}.
\end{equation}
\end{lemma}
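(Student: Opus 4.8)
The plan is to exploit that $D(\hbar)$ is diagonal in the basis $\{v_\lambda\}$, so that conjugating a single matrix unit $E_{l+k,l}$ by $D(\hbar)^m$ merely multiplies it by a scalar. First I would rewrite $\alpha_{-k}=\sum_{l\in\Z+\frac12}E_{l+k,l}$ and, by linearity, reduce the statement to computing $D(\hbar)^m E_{l+k,l} D(\hbar)^{-m}$ for each fixed $l$. Recall that $E_{l+k,l}$ acts nontrivially on $v_\mu$ precisely when $v_\mu$ contains the factor $\underline{l}$ but not $\underline{l+k}$, in which case $E_{l+k,l}v_\mu=\varepsilon\,v_\lambda$ with $\varepsilon=\pm1$ and $v_\lambda$ obtained by replacing $\underline{l}$ by $\underline{l+k}$. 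Since $D(\hbar)v_\nu=d_\nu v_\nu$ with $d_\nu:=\prod_{(i,j)\in\nu}(1+\hbar(j-i))$, one gets $D(\hbar)^m E_{l+k,l} D(\hbar)^{-m}\,v_\mu=(d_\lambda/d_\mu)^m\,E_{l+k,l}v_\mu$, the sign $\varepsilon$ cancelling between numerator and denominator. On vectors where $E_{l+k,l}$ acts by zero both sides vanish, so it remains to show that $d_\lambda/d_\mu$ is a fixed scalar $c_l$ independent of $\mu$.

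The heart of the argument is the computation of $d_\lambda/d_\mu=\prod_{\square\in\lambda/\mu}(1+\hbar\,c(\square))$, where $c(\square)$ denotes the content. Here $\lambda$ and $\mu$ differ by moving a single fermion from position $l$ to position $l+k$, so that $\lambda\supseteq\mu$ and the skew shape $\lambda/\mu$ is a border strip of size $k$. I would prove that the multiset of contents of this border strip is exactly $\{\,l+\tfrac12,\,l+\tfrac32,\,\dots,\,l+k-\tfrac12\,\}$, independently of the remaining fermions. This is most transparent on the boundary lattice path of the Young diagram: under the dictionary sending occupied (resp.\ empty) half-integer positions to vertical (resp.\ horizontal) boundary edges, the move turns the vertical edge at $l$ into a horizontal one and the horizontal edge at $l+k$ into a vertical one, and the boxes swept out lie, one on each diagonal of content $l+\tfrac12,\dots,l+k-\tfrac12$. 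Consequently $c_l=d_\lambda/d_\mu=\prod_{i=1}^{k}(1+\hbar(l+i-\tfrac12))$.

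Putting these together gives $D(\hbar)^m E_{l+k,l} D(\hbar)^{-m}=\bigl(\prod_{i=1}^k(1+\hbar(l+i-\tfrac12))\bigr)^m E_{l+k,l}$ on all of $\mathcal{V}_0$, and summing over $l\in\Z+\frac12$ yields exactly the claimed identity. As a sanity check I would verify the base case $k=1$ (a single added box of content $l+\tfrac12$) and the two inequivalent strips arising for $k=2$ from the same pair $(l,l+k)$, confirming that the content multiset is genuinely insensitive to the intervening occupancies even though the shape of the strip is not.

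The step I expect to be the main obstacle is precisely the content computation of the middle paragraph: verifying that the added border strip always occupies the consecutive diagonals $l+\tfrac12,\dots,l+k-\tfrac12$, and in particular that this multiset does not depend on which intermediate positions between $l$ and $l+k$ are occupied. Everything else --- the diagonalisability of $D(\hbar)$, the cancellation of signs, and raising the eigenvalue ratio to the $m$-th power --- is formal.
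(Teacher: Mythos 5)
Your argument is correct, but it runs along a genuinely different track from the paper's. The paper's proof is a ``first-quantized'' computation: it realizes $D(\hbar)$ as the multiplicative lift of a single-particle diagonal operator $\sum_l d_l E_{l,l}$ on $V$ with $d_{l+1}/d_l=1+\hbar(l+1/2)$, tracks the single basis vector $\underline{l}$ through $D^m\alpha_{-k}D^{-m}$, and reads off the scalar as the telescoping product $d_{l+k}/d_l=\prod_{i=1}^k(1+\hbar(l+i-1/2))$ --- essentially one line, with the compatibility between this single-particle description and the content-product eigenvalues on $v_\lambda$ left implicit. You instead stay entirely on the Fock space, write the conjugation eigenvalue as $d_\lambda/d_\mu=\prod_{\square\in\lambda/\mu}(1+\hbar\,c(\square))$, and prove the Maya-diagram/ribbon fact that the border strip created by moving a fermion from $l$ to $l+k$ sweeps exactly the diagonals of content $l+\tfrac12,\dots,l+k-\tfrac12$, independently of the intermediate occupancies. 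That combinatorial step, which you correctly flag as the crux, is precisely the fact the paper's proof buries inside its identification $D(\hbar)=\prod$-lift of $\sum_l d_l E_{l,l}$; so your route costs more work (the border-strip content computation, the bookkeeping of the sign $\varepsilon$ and of the vectors annihilated by $E_{l+k,l}$) but makes the underlying Murnaghan--Nakayama-type combinatorics explicit, while the paper's route trades that for a slicker but more implicit single-particle argument. Both yield the same scalar and the same conclusion.
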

\begin{proof}
Let $D = \sum\limits_{l\in\Z+\frac{1}{2}}d_lE_{l,l}$, where $\dfrac{d_{l+1}}{d_l} = 1+\hbar(l+1/2)$.
Let us track the element $\underline{l}$ under the action of $D^m(\hbar)\alpha_{-k}D^{-m}(\hbar)$:
\begin{equation}
\underline{l} \stackrel{D^{-1}}{\longmapsto} d^{-1}_l\underline{l} \stackrel{\alpha_{-k}}{\longmapsto} d^{-1}_l\underline{l+k} \stackrel{D}{\longmapsto} \frac{d_{l+k}}{d_l}\underline{l+k}.
\end{equation} 
It remains to be noticed that 
$\frac{d_{l+k}}{d_l} = \prod\limits_{i=1}^{k}\frac{d_{l+i}}{d_{l+i-1}}= \prod\limits_{i=1}^k (1+\hbar(l+i-1/2)).$
\end{proof}

\begin{notation} Let
	 \begin{equation}
	 P_k(l) := \prod\limits_{i=0}^{k-1}(1+\hbar(l+i+1/2)).
	 \end{equation}
In particular,
\begin{equation}
D^m(\hbar)\alpha_{-k}D^{-m}(\hbar) = \sum\limits_{l\in\Z+\frac{1}{2}} (P_k(l))^m E_{l+k,l}.
\end{equation}
\end{notation}

\begin{lemma}\label{L:conj2}
For any positive integer $a$
\begin{equation}
e^{\alpha_1} \sum\limits_{l\in\Z+\frac{1}{2}} f(l)E_{l+a,l} \, e^{-\alpha_1} = \sum\limits_{l\in\Z+\frac{1}{2}}\sum_{t=0}^\infty \frac{(\Delta^t\,f)(l)}{t!}E_{l+a-t,l} + \frac{(\Delta^{a-1} f)_{-1/2}}{a!}\mathrm{Id}.
\end{equation}
\end{lemma}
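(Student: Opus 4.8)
The plan is to prove Lemma~\ref{L:conj2} by conjugating the operator $\sum_l f(l)E_{l+a,l}$ by $e^{\alpha_1}$ using the standard adjoint-action expansion $e^{\alpha_1}Xe^{-\alpha_1} = \sum_{t\geq 0}\frac{1}{t!}(\mathrm{ad}_{\alpha_1})^t X$, where $\mathrm{ad}_{\alpha_1}X = [\alpha_1,X]$. Since $\alpha_1 = \sum_{l\in\Z+\frac12}E_{l-1,l}$, the single commutator is exactly computed by Lemma~\ref{l:Ecomm}: it shifts the upper index down by one, replaces $f$ by $\Delta f$, and produces an identity-operator term only when the energy difference equals $1$. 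The strategy is therefore to iterate this single-commutator formula $t$ times and then resum.

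First I would argue by induction on $t$ that
\begin{equation}
(\mathrm{ad}_{\alpha_1})^t\sum_{l\in\Z+\frac12} f(l)E_{l+a,l}
= \sum_{l\in\Z+\frac12}(\Delta^t f)(l)\,E_{l+a-t,l} + (\text{identity contributions}),
\end{equation}
where the operator part follows immediately from Lemma~\ref{l:Ecomm} applied repeatedly, each application both lowering the offset $a\mapsto a-1$ and differencing $f\mapsto \Delta f$. The only subtlety is bookkeeping the $\mathrm{Id}$ terms: by the $\delta_{a,1}$ in Lemma~\ref{l:Ecomm}, an identity term is generated precisely at the step where the current offset has dropped to $1$, i.e.\ after $a-1$ commutators, and at that step the coefficient is $(\Delta^{a-1}f)_{-1/2}$. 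One must check that no further identity terms appear: once the offset is $0$ the bracket with $\alpha_1$ of a diagonal operator produces again a shift-down operator but the $\delta_{a,1}$ condition is no longer met for offsets $0,-1,-2,\dots$, so the identity contribution arises exactly once.

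Assembling the pieces, the operator part sums to $\sum_{l}\sum_{t\geq 0}\frac{(\Delta^t f)(l)}{t!}E_{l+a-t,l}$, matching the claimed first term. For the scalar part, the single identity term $(\Delta^{a-1}f)_{-1/2}\,\mathrm{Id}$ appears inside the $(\mathrm{ad}_{\alpha_1})^{a}$ contribution (produced at the final, $a$-th, bracketing) and hence carries the prefactor $\frac{1}{a!}$ from the exponential expansion, giving $\frac{(\Delta^{a-1}f)_{-1/2}}{a!}\mathrm{Id}$, which is exactly the stated identity term.

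The main obstacle I expect is the careful tracking of the identity-operator contributions: Lemma~\ref{l:Ecomm} emits an $\mathrm{Id}$ term only when $a=1$, so in the iterated bracketing I must verify that along the chain of offsets $a,a-1,\dots,1,0,\dots$ the condition is met at exactly one step (offset $=1$), that the coefficient there is correctly $(\Delta^{a-1}f)_{-1/2}$ rather than a value at some other index, and that subsequent commutators act on this scalar term trivially (since $[\alpha_1,\mathrm{Id}]=0$) so it is not further modified. A secondary technical point is justifying the convergence/finiteness of the infinite sum over $t$ when acting on any fixed basis vector $v_\lambda$, which holds because $\Delta^t f$ eventually interacts only with finitely many relevant entries for each $v_\lambda$, so the operator is well defined on $\mathcal{V}_0$.
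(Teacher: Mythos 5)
Your proof is correct and follows essentially the same route as the paper: expand $e^{\alpha_1}(\cdot)e^{-\alpha_1}$ via Hadamard's formula $e^{\mathrm{ad}_{\alpha_1}}$, iterate Lemma~\ref{l:Ecomm} to get the $\frac{(\Delta^t f)(l)}{t!}E_{l+a-t,l}$ terms, and observe that the single $\mathrm{Id}$ contribution arises exactly at the $a$-th bracketing (when the offset has reached $1$), giving $\frac{(\Delta^{a-1}f)_{-1/2}}{a!}\,\mathrm{Id}$. Your extra care about the uniqueness of the identity term and well-definedness on $\mathcal{V}_0$ only makes explicit what the paper leaves implicit.
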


\begin{proof}
Recall Hadamard's formula: $e^XYe^{-X} = e^{\mathrm{ad}_X} (Y)$, where $\mathrm{ad}_X(\cdot) = [X,\cdot]$. Using this and Lemma \ref{l:Ecomm} we have:
\begin{align}
 e^{\alpha_1} \sum\limits_{l\in\Z+\frac{1}{2}} f(l)E_{l+a,l} \, e^{-\alpha_1} & 
 =\sum\limits_{l\in\Z+\frac{1}{2}}\sum_{t=0}^\infty \frac{(\Delta^t\,f)(l)}{t!}E_{l+a-t,l} 
 \\ \notag & \phantom{= {}}
 + (\delta_{a,1}f_{-1/2}+\frac{1}{2!}\delta_{a-1,1}(\Delta f)_{-1/2}+\frac{1}{3!}\delta_{a-2,1}(\Delta^2 f)_{-1/2}+\ldots)\mathrm{Id}
 \\ \notag &
 =  \sum\limits_{l\in\Z+\frac{1}{2}}\sum_{t=0}^\infty \frac{(\Delta^t\,f)(l)}{t!}E_{l+a-t,l} + \frac{(\Delta^{a-1} f)_{-1/2}}{a!}\mathrm{Id}.
\end{align}
\end{proof}
%
\begin{proposition} We have:
	\begin{align}
	\label{eq:checkA+Id}\mathcal{\check{A}}(k,\hbar) = \frac{\hbar^{-k}}{k}\left( \sum\limits_{l\in\Z+\frac{1}{2}}  \frac{(\Delta^{q+k}\,P^m_k)(l)}{(q+k)!} E_{l-q,l} + \frac{(\Delta^{k-1}\,P^m_k)(l)\vert_{l=-1/2}}{k!}\mathrm{Id}\right).
	\end{align}
\end{proposition}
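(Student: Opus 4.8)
The plan is to read the formula off directly by composing the two conjugation lemmas established just above. First I would substitute the definition~\eqref{eq:aopdef} and factor out the scalar $\hbar^{-k}/k$, so that the task reduces to evaluating the double conjugation $e^{\alpha_1}\bigl(D(\hbar)^m\,\alpha_{-k}\,D(\hbar)^{-m}\bigr)e^{-\alpha_1}$, i.e.\ conjugating $\alpha_{-k}$ first by $D(\hbar)^m$ and then by $e^{\alpha_1}$.

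The inner conjugation is exactly Lemma~\ref{L:conj1}, which together with the notation introduced immediately after it gives
\[
D(\hbar)^m\,\alpha_{-k}\,D(\hbar)^{-m}=\sum_{l\in\Z+\frac12}\bigl(P_k(l)\bigr)^m E_{l+k,l}.
\]
This operator has precisely the shape $\sum_l f(l)\,E_{l+a,l}$ to which Lemma~\ref{L:conj2} applies, with $a=k$ and $f=P_k^m$. Applying that lemma yields
\[
e^{\alpha_1}\Bigl(\sum_{l}P_k^m(l)\,E_{l+k,l}\Bigr)e^{-\alpha_1}
=\sum_{l\in\Z+\frac12}\sum_{t=0}^{\infty}\frac{(\Delta^t P_k^m)(l)}{t!}\,E_{l+k-t,l}
+\frac{(\Delta^{k-1}P_k^m)_{-1/2}}{k!}\,\mathrm{Id}.
\]

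It then only remains to re-index the operator-valued sum. Putting $q=t-k$ (so $t=q+k$, and $t\geq 0$ becomes $q\geq -k$) sends $\Delta^t\mapsto\Delta^{q+k}$, $t!\mapsto(q+k)!$, and $E_{l+k-t,l}\mapsto E_{l-q,l}$, which brings the operator term into the form displayed in the proposition. The central term already matches, since $(\Delta^{k-1}P_k^m)_{-1/2}$ is by definition $(\Delta^{k-1}P_k^m)(l)\vert_{l=-1/2}$. Reinstating the prefactor $\hbar^{-k}/k$ finishes the argument. I do not anticipate any genuine obstacle: the proposition is a mechanical composition of Lemmas~\ref{L:conj1} and~\ref{L:conj2}, and the only points demanding care are the bookkeeping of the energy shift under the two conjugations and the correct matching of the single $\mathrm{Id}$-contribution produced by the central term of Lemma~\ref{L:conj2}.
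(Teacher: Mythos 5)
Your proposal is correct and follows exactly the route the paper takes: its proof of this proposition is the one-line remark that it ``follows from Definition~\ref{def:acheck}, Lemma~\ref{L:conj1} and Lemma~\ref{L:conj2},'' and your write-up simply spells out that composition together with the reindexing $q=t-k$. Nothing is missing.
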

\begin{proof}
	Follows from Definition \ref{def:acheck}, Lemma \ref{L:conj1} and Lemma \ref{L:conj2}. 
\end{proof}

\subsubsection{The difference operator and the $P$- and $R$-polynomials}

Now let us formulate some technical results about the difference operator $\Delta$.
\begin{lemma}\label{L:delta1}
If $t>k,$ then $\Delta^t P_k(l) =0$ and if $0\leq t\leq k$, then
\begin{equation}
\Delta^t P_k(l) = (k)_t \hbar^t \prod\limits_{i=0}^{k-t-1} (1+\hbar(l+i+1/2)) = (k)_t \hbar^t P_{k-t}(l),
\end{equation}
where we recall that $(k)_t$ denotes the falling factorial, see equation~\eqref{eq:defPoch}.
\end{lemma}
\begin{proof}
The proof is a straightforward induction on the parameter $t$.
\end{proof}

\begin{proposition}\label{prop:deltapexr} We have:
\begin{align} \label{EQ:P(l)}
\frac{\Delta^{t}P^m_k(l)}{t!}  & = \sum\limits_{i_1+\ldots+i_m=t} \prod\limits_{j=1}^{m} \frac{\Delta^{i_j}P_{k}(l-i_{j+1}-\ldots-i_m)}{i_j!}
\\ \notag &
=  \sum\limits_{i_1+\ldots+i_m=t} \hbar^t \prod\limits_{j=1}^{m} \frac{(k)_{i_j}}{i_j!} P_{k-i_j}(l-i_{j+1}-\ldots-i_m),
\end{align} 
where the summations go over all ordered tuples of non-negative integers $(i_1,\ldots,i_m).$
\end{proposition}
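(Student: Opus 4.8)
The plan is to reduce the claim to a general Leibniz rule for the backward difference operator $\Delta$ and then substitute Lemma~\ref{L:delta1}. Since $P^m_k(l)=\prod_{j=1}^m P_k(l)$, the left-hand side is $\Delta^t$ applied to an $m$-fold product, so the first equality will be an instance of a formula for $\Delta^t(f_1\cdots f_m)$ that I would first establish for arbitrary functions $f_1,\dots,f_m$ of $l$ and then specialize to $f_j=P_k$.

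The key structural observation is that, writing $S$ for the shift $(Sf)(l)=f(l-1)$, we have $\Delta=1-S$ with $S$ an algebra automorphism. Hence $\Delta$ is a twisted (skew) derivation, satisfying $\Delta(fg)=(\Delta f)\,g+(Sf)(\Delta g)$, and $S$ commutes with $\Delta$. First I would prove, by induction on $t$ using Pascal's rule, the two-factor formula $\Delta^t(fg)(l)=\sum_{a+b=t}\binom{t}{a}(\Delta^a f)(l-b)(\Delta^b g)(l)$; the shift $l-b$ on the first factor is precisely the accumulated action of the automorphism $S$ coming from the skew-Leibniz identity.

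Next I would iterate this by induction on the number of factors $m$, writing $f_1\cdots f_m=f_1\cdot(f_2\cdots f_m)$, applying the two-factor rule, and then invoking the inductive hypothesis for the $(m-1)$-fold product $f_2\cdots f_m$. Setting $i_1=a$ and $b=i_2+\cdots+i_m$ and collecting coefficients via $\binom{t}{a}\binom{b}{i_2,\dots,i_m}=\binom{t}{i_1,\dots,i_m}$ yields the formula $\Delta^t(f_1\cdots f_m)(l)=\sum_{i_1+\cdots+i_m=t}\binom{t}{i_1,\dots,i_m}\prod_{j=1}^m(\Delta^{i_j}f_j)(l-i_{j+1}-\cdots-i_m)$. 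Dividing by $t!$ and taking $f_j=P_k$ for all $j$ gives the first equality of the proposition.

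Finally, the second equality is immediate: I substitute $\Delta^{i_j}P_k(\cdot)=(k)_{i_j}\hbar^{i_j}P_{k-i_j}(\cdot)$ from Lemma~\ref{L:delta1} into each factor, and use $\sum_j i_j=t$ to factor $\hbar^t$ out of the product. The main obstacle is purely bookkeeping: tracking how the automorphism $S$ accumulates on the factors lying to the left of the one being differentiated, so that the asymmetric shift pattern $l-i_{j+1}-\cdots-i_m$ emerges with the correct indices. Once the two-factor skew-Leibniz identity is pinned down with the right shift, the remaining induction and substitution are routine.
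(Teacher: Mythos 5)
Your proposal is correct and takes essentially the same route as the paper: the paper's proof is precisely an induction using the skew-Leibniz identity $(\Delta fg)(l)=(\Delta f)(l)g(l)+f(l-1)(\Delta g)(l)$ together with Lemma~\ref{L:delta1}, and your argument simply fills in the details (two-factor binomial formula by induction on $t$ via Pascal's rule, then induction on the number of factors to get the multinomial version with the accumulated shifts $l-i_{j+1}-\cdots-i_m$). The bookkeeping of the shifts and the final substitution of $\Delta^{i_j}P_k=(k)_{i_j}\hbar^{i_j}P_{k-i_j}$ are all handled correctly.
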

\begin{proof}
By induction on the parameter $t$ using Lemma \ref{L:delta1} and the following identity:
\begin{align}
(\Delta fg)(l)  = (\Delta f)(l)g(l) + f(l-1)(\Delta g)(l).
\end{align}
\end{proof}

In what follows we will extensively use the following expressions:
\begin{definition} We define
	\begin{align}	\label{eq:Rdef}
	R(\hbar;k,l,i_1,\dots,i_m)&:=\prod\limits_{j=1}^{m} P_{k-i_j}(l-i_{j+1}-\ldots-i_m)\\\nonumber &=\prod\limits_{j=1}^{m}  \prod\limits_{\gamma_j=0}^{k-i_j-1}(1+\hbar(l-i_{j+1}-\ldots -i_m+\gamma_j+1/2)); \\ 	\label{eq:Puexp}
	R_p(k,l,i_1,\dots,i_m)&:=[\hbar^p]\prod\limits_{j=1}^{m} P_{k-i_j}(l-i_{j+1}-\ldots-i_m) \\ \notag
	& =[\hbar^p]R(\hbar;k,l,i_1,\dots,i_m) .
	\end{align}
\end{definition}

\begin{proposition}	\label{prop:Rppoly}
	Expression
$R_p(k,l,i_1,\dots,i_m)$
	defined by equation~\eqref{eq:Puexp} is a polynomial of total degree $2p$ in all of its variables, and moreover $R_p(k,l,0,\dots,0)$ is divisible by $k$.
\end{proposition}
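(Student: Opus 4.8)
The plan is to turn the product defining $R$ into a sum by passing to power sums (equivalently, by taking a logarithm), so that the Faulhaber-type summation formulas established in Appendix~\ref{sec:appfaul} can be applied termwise, and then to recover $R_p$ by exponentiation while carefully tracking total degrees. The point is that the number of factors in~\eqref{eq:Rdef} is itself variable, so polynomiality in $k$ is not obvious and is precisely what Faulhaber supplies.

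First I would read off from~\eqref{eq:Rdef} that $R(\hbar;k,l,i_1,\dots,i_m)=\prod_x(1+\hbar x)$, where $x$ ranges over the multiset $\{\,l-i_{j+1}-\dots-i_m+\gamma_j+\tfrac12 : 1\le j\le m,\ 0\le\gamma_j\le k-i_j-1\,\}$. Hence $R_p=[\hbar^p]R$ is the elementary symmetric polynomial $e_p$ of this multiset, and its $r$-th power sum is $p_r=\sum_{j=1}^m\sum_{\gamma_j=0}^{k-i_j-1}(l-i_{j+1}-\dots-i_m+\gamma_j+\tfrac12)^r$. This is exactly a sum of $r$-th powers over arithmetic progressions whose lengths $k-i_j$ are among the variables, so Faulhaber's formula (Appendix~\ref{sec:appfaul}) shows that each $p_r$ is a genuine polynomial in $k,l,i_1,\dots,i_m$ of total degree $r+1$. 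Writing $R=\exp\big(\sum_{r\ge1}\tfrac{(-1)^{r-1}}{r}p_r\hbar^r\big)$ and extracting $[\hbar^p]$ (equivalently, expressing $e_p$ through $p_1,\dots,p_p$ by Newton's identities, whose stability in the number of variables is what lets the argument survive the variable multiset size) then shows at once that $R_p$ is a polynomial in all its variables.

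The degree count is the heart of the matter. Writing $[\hbar^p]R=\sum_{\sum_r r m_r=p}\prod_r\tfrac{1}{m_r!}\big(\tfrac{(-1)^{r-1}}{r}p_r\big)^{m_r}$, each summand has total degree $\sum_r m_r(r+1)=p+\sum_r m_r$, which is at most $2p$ and equals $2p$ only for the partition $(1,1,\dots,1)$, i.e.\ for the term $\tfrac{1}{p!}p_1^{\,p}$. Thus $\deg R_p\le 2p$, and to get equality it suffices to check that the degree-$2$ part of $p_1$ is a nonzero homogeneous polynomial: summing the progressions gives $p_1=\sum_{j=1}^m\big[(k-i_j)(l-i_{j+1}-\dots-i_m+\tfrac12)+\binom{k-i_j}{2}\big]$, whose top-degree part specializes at $i_1=\dots=i_m=0$ to $m\,k(l+k/2)\ne 0$. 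Since the polynomial ring is an integral domain, $\tfrac{1}{p!}p_1^{\,p}$ is then a nonzero homogeneous part of degree $2p$ that cannot be cancelled by the strictly lower-degree contributions of the other partitions, so $\deg R_p=2p$ exactly.

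Finally, for the divisibility (understood for $p\ge1$, the case $p=0$ giving $R_0=1$), I would specialize $i_1=\dots=i_m=0$ in~\eqref{eq:Rdef}, where the product collapses to $R(\hbar;k,l,0,\dots,0)=P_k(l)^m$, so that $R_p(k,l,0,\dots,0)=[\hbar^p]P_k(l)^m$ as polynomials in $k$ and $l$. Evaluating this polynomial at the integer $k=0$ uses $P_0(l)=1$ (the empty product), whence $[\hbar^p]P_0(l)^m=\delta_{p,0}=0$ for $p\ge1$; thus $R_p(k,l,0,\dots,0)$ vanishes at $k=0$ and is divisible by $k$. I expect the main obstacle to be the bookkeeping in the degree estimate—in particular, confirming that no cancellation occurs at the top degree—rather than the polynomiality itself, which follows formally once Faulhaber's formula is in place.
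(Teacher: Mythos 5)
Your proof is correct, but it takes a genuinely different route from the paper's. The paper extracts $[\hbar^p]$ of the product directly, obtaining a sum over compositions $\beta_1+\cdots+\beta_m=p$ of products of \emph{elementary} symmetric functions of the $m$ arithmetic progressions, i.e.\ of the quantities $T_{\beta_j}(x,k)$ of Proposition~\ref{prop:extFaulhaber}; polynomiality, the degree bound, and the divisibility by $k$ then all follow at once from Propositions~\ref{prop:extFaulhaber} and~\ref{prop:extFaulhaberDivis}. You instead pass to \emph{power} sums via the logarithm, apply only the classical Faulhaber formula~\eqref{eq:Faulhaber} to each $p_r$, and recover $R_p$ by exponentiation. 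Your route buys two things the paper does not bother with: the exact degree $2p$ (the paper only ever uses the bound $\deg R_p\leq 2p$, e.g.\ in~\eqref{eq:RQexpan}), which you get by noting that only the partition $(1^p)$ reaches degree $2p$ and that the top homogeneous part of $p_1^{\,p}/p!$ cannot vanish in an integral domain; and a reduction of the divisibility claim to the bare statement that Faulhaber polynomials have no constant term, bypassing the separate Proposition~\ref{prop:extFaulhaberDivis}. The one step you should make explicit is the evaluation at $k=0$: a priori the polynomial $R_p(k,l,0,\dots,0)$ is only known to agree with $[\hbar^p]P_k(l)^m$ for positive integers $k$, and asserting that it also passes through the empty-product value $\delta_{p,0}$ at $k=0$ is exactly the vanishing you are trying to prove. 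This is justified, but by your own construction rather than by the empty product itself: each $p_r$ is assembled from the polynomials $\sum_{\gamma=0}^{N-1}\gamma^{s}$, which by~\eqref{eq:Faulhaber} have no constant term in $N$ and hence vanish at $N=k-i_j=0$, so every $p_r$ with $r\geq 1$ vanishes at $(k,i)=(0,0)$ and therefore so does every monomial $\prod_r p_r^{m_r}$ contributing to $R_p$ with $p\geq 1$. With that sentence added, the argument is complete.
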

\begin{proof}
	We have 
	\begin{align}
	\label{Eq:degreep}
	&[\hbar^p]\prod\limits_{j=1}^{m} P_{k-i_j}(l-i_{j+1}-\ldots-i_m) = [\hbar^p]\prod\limits_{j=1}^{m}  \prod\limits_{\gamma_j=0}^{k-i_j-1}(1+\hbar(l-i_{j+1}-\ldots -i_m+\gamma_j+1/2))\\
	&=\sum_{\beta_1+\ldots+\beta_m=p}\;\prod_{j=1}^m\;\sum_{0\leq\gamma_1<\ldots<\gamma_{\beta_j}< k-i_j}\;\prod_{r=1}^{\beta_j} (l-i_{j+1}-\ldots -i_m+\gamma_r+1/2) \nonumber
	\end{align}
	From the last formula it is easy to see that the current proposition follows from Proposition \ref{prop:extFaulhaber}, including the divisibility statement.
\end{proof}

\subsubsection{The quasi-rationality statements}

\begin{proposition}\label{P:uEcoeff}
	For any $q\in \mathbb{Z}_{>0}$ and $p\in \mathbb{Z}_{\geq 0}$ we have
\begin{equation}\label{eq:AcoefsQ}
[\hbar^{q+p}][E_{l-q,l}]\check{\mathcal{A}}(k,\hbar) = \sum_{\substack {0\leq\sigma\leq \min(2p,\,q+k) \\ s_1+\dots+s_m=\sigma }}
Q^p_{s_1,\dots,s_m}(k,l)\dfrac{(k)_{s_1}\cdots(k)_{s_m}}{k(q+k-\sigma)!}\;(mk-\sigma)_{k+q-\sigma},
\end{equation}
where $Q^p_{s_1,\dots,s_m}(k,l)$ are some polynomials in $k$ and $l$. 
Moreover, the polynomials $Q^p_{0,\dots,0}(k,l)$ are divisible by $k$.
\end{proposition}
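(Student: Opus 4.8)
The plan is to read off the off-diagonal coefficient of $\check{\mathcal{A}}(k,\hbar)$ from equation~\eqref{eq:checkA+Id}, expand it via the $\Delta$-formula of Proposition~\ref{prop:deltapexr}, and then collapse the resulting multiple sum with a Chu--Vandermonde convolution. Since we assume $q\ge 1$, the operator $E_{l-q,l}$ is strictly off-diagonal and the $\mathrm{Id}$-term in~\eqref{eq:checkA+Id} makes no contribution, so reading off the coefficient gives $[E_{l-q,l}]\check{\mathcal{A}}(k,\hbar)=\frac{\hbar^{-k}}{k}\frac{(\Delta^{q+k}P^m_k)(l)}{(q+k)!}$. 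Substituting Proposition~\ref{prop:deltapexr} with $t=q+k$, recalling the definition~\eqref{eq:Rdef} of $R$, and writing $(k)_{i_j}/i_j!=\binom{k}{i_j}$, this equals $\frac{\hbar^{q}}{k}\sum_{i_1+\dots+i_m=q+k}\big(\prod_{j}\binom{k}{i_j}\big)R(\hbar;k,l,i_1,\dots,i_m)$. Extracting the coefficient of $\hbar^{q+p}$ and using~\eqref{eq:Puexp} then leaves the finite sum $\frac1k\sum_{i_1+\dots+i_m=q+k}\big(\prod_j\binom{k}{i_j}\big)R_p(k,l,i_1,\dots,i_m)$, which is the object to be reorganized.

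Next I would expand $R_p$ in the falling-factorial basis in the variables $i_1,\dots,i_m$. By Proposition~\ref{prop:Rppoly} the quantity $R_p$ is a polynomial of total degree $2p$; converting ordinary powers of the $i_j$ into falling factorials is a triangular change of basis with constant coefficients, so there are polynomials $Q^p_{s_1,\dots,s_m}(k,l)$ in $k$ and $l$, supported on tuples with $s_1+\dots+s_m=\sigma\le 2p$, such that $R_p=\sum_{\sigma\le 2p}\sum_{s_1+\dots+s_m=\sigma}Q^p_{s_1,\dots,s_m}(k,l)\prod_{j}(i_j)_{s_j}$. The computational heart of the argument is the elementary identity $(i)_{s}\binom{k}{i}=(k)_{s}\binom{k-s}{i-s}$: applying it factor by factor and substituting $i_j'=i_j-s_j$ turns the inner summation into $\big(\prod_j(k)_{s_j}\big)\sum_{i_1'+\dots+i_m'=q+k-\sigma}\prod_j\binom{k-s_j}{i_j'}$, and the $m$-fold Chu--Vandermonde convolution (valid as a polynomial identity in the upper arguments, since $\sum_j(k-s_j)=mk-\sigma$) evaluates the remaining sum to $\binom{mk-\sigma}{q+k-\sigma}=\frac{(mk-\sigma)_{q+k-\sigma}}{(q+k-\sigma)!}$. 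Collecting the prefactors reproduces~\eqref{eq:AcoefsQ} with the very same polynomials $Q^p_{s_1,\dots,s_m}$, and the truncation $\sigma\le\min(2p,q+k)$ arises automatically because for $\sigma>q+k$ the Vandermonde sum runs over the empty set and vanishes.

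The divisibility statement then drops out by specializing the falling-factorial expansion at $i_1=\dots=i_m=0$: since $(0)_{s}=\delta_{s,0}$, only the $\sigma=0$ term survives, giving $Q^p_{0,\dots,0}(k,l)=R_p(k,l,0,\dots,0)$, which is divisible by $k$ by the final assertion of Proposition~\ref{prop:Rppoly}. The hard part will not be any of the binomial bookkeeping, which is routine, but rather making sure that the falling-factorial expansion of $R_p$ has coefficients that are genuinely polynomial in $k$ and $l$ and that no monomial of $i$-degree exceeding $2p$ appears; both of these are exactly what the total-degree statement of Proposition~\ref{prop:Rppoly} supplies, which is why that proposition is the essential input here.
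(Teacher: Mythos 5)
Your proposal is correct and follows essentially the same route as the paper: extract the coefficient from the $\Delta$-expansion of Proposition~\ref{prop:deltapexr}, expand $R_p$ in the falling-factorial basis using Proposition~\ref{prop:Rppoly}, absorb $(i_j)_{s_j}$ into the binomial via $(i)_s\binom{k}{i}=(k)_s\binom{k-s}{i-s}$, and collapse the sum with the multinomial/Chu--Vandermonde identity, with the divisibility of $Q^p_{0,\dots,0}$ obtained exactly as in the paper from $Q^p_{0,\dots,0}(k,l)=R_p(k,l,0,\dots,0)$. The only cosmetic difference is that the paper phrases the final convolution as the falling-factorial multinomial formula rather than an $m$-fold Vandermonde sum; these are the same identity.
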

\begin{proof}
	Consider the following basis in the space of polynomials of total degree $\leq 2p$ in variables $i_1,\dots,i_m$:
	\begin{equation}
	\{(i_1)_{s_1}\cdots(i_m)_{s_m}|s_1+\ldots+s_m\leq 2p\}
	\end{equation}
	Since it follows from Proposition \ref{prop:Rppoly} (in the notation of that proposition) that  $R_p$ is a polynomial in $i_1,\dots,i_m$ of total degree $2p$, it can be expressed in terms of this basis:
	\begin{equation} \label{eq:RQexpan}
	R_p(k,l,i_1,\dots,i_m)= \sum_{\sigma=0}^{2p}\;\sum_{s_1+\dots+s_m=\sigma}Q^p_{s_1,\dots,s_m}(k,l)\; \cdot\;(i_1)_{s_1}\cdots(i_m)_{s_m},
	\end{equation}	
	where $Q^p_{s_1,\dots,s_m}(k,l)$ are some polynomials in $k$ and $l$ of degree $\leq 2p$. Since $R_p(k,l,0,\dots,0)$ is divisible by $k$ we naturally obtain that $Q^p_{0,\dots,0}(k,l)$ is divisible by $k$.
	
	Now, with the help of Proposition \ref{prop:deltapexr}, we can write
	\begin{align} \label{eq:AEcoef}
	&[\hbar^{q+p}][E_{l-q,l}]\check{\mathcal{A}}(k,\hbar)=[\hbar^{q+p}] \frac{\hbar^{-k}}{k} \frac{(\Delta^{q+k}\,P^m_k)(l)}{(q+k)!}\\ \nonumber
	&= 
	[\hbar^{q+p}]\sum\limits_{i_1+\ldots+i_m=q+k} \frac{\hbar^{q}}{k} \prod\limits_{j=1}^{m} \frac{(k)_{i_j}}{i_j!} P_{k-i_j}(l-i_{j+1}-\ldots-i_m) \\ \nonumber
	&=\frac{1}{k}\sum\limits_{i_1+\ldots+i_m=q+k}	R_p(k,l,i_1,\dots,i_m) \prod\limits_{j=1}^{m} \frac{(k)_{i_j}}{i_j!} \\ \nonumber
	&=\frac{1}{k}\sum\limits_{i_1+\ldots+i_m=q+k} \;	 \sum_{\sigma=0}^{2p}\;\sum_{s_1+\dots+s_m=\sigma}Q^p_{s_1,\dots,s_m}(k,l)\prod\limits_{j=1}^{m} \dfrac{(k)_{i_j}\cdot (i_j)_{s_j}}{i_j!}\\ \nonumber
	&= \frac{1}{k}\sum_{\substack {0\leq\sigma\leq \min(2p,\,q+k) \\s_1+\dots+s_m=\sigma}}Q^p_{s_1,\dots,s_m}(k,l)\cdot(k)_{s_1}\cdots(k)_{s_m}\sum\limits_{i_1+\ldots+i_m=q+k}\;\prod\limits_{j=1}^{m} \dfrac{(k-s_j)_{i_j-s_j}}{(i_j-s_j)!}\\ \nonumber
	&= \frac{1}{k}\sum_{\substack {0\leq\sigma\leq \min(2p,\,q+k) \\s_1+\dots+s_m=\sigma}}Q^p_{s_1,\dots,s_m}(k,l)\cdot(k)_{s_1}\cdots(k)_{s_m} \cdot \dfrac{(mk-\sigma)_{q+k-\sigma}}{(q+k-\sigma)!}
	\end{align}
	Note that in this computation in the third line from the bottom all terms with $\sigma>q+k$ vanish as in that case $\exists j:\; i_j<s_j$, and thus $(i_j)_{s_j}=0$ for that $j$.
	In the last equality we used the falling factorial version of the multinomial formula, i.e.
	\begin{equation}
	\sum\limits_{i_1+\ldots+i_m=t}\dfrac{t!}{i_1!\cdots i_m!}\, (k_1)_{i_1}\cdots (k_m)_{i_m}= (k_1+\ldots+k_m)_t
	\end{equation}
	This proves the proposition.
\end{proof}


\begin{proposition}\label{prop:AcoefsPoly} We have:
\begin{align}\label{eq:AcoefsPoly}
& [\hbar^{q+p}][E_{l-q,l}]\check{\mathcal{A}}(k,\hbar)
\\ \notag 
& =\dfrac{(mk-m)!}{k!\,(mk-k-1)!} \cdot \dfrac{1}{\displaystyle\prod_{\substack{m\leq j \leq 2p-1 \\  m\nmid j}} \left(k-\dfrac{j}{m}\right)}\cdot\dfrac{\mathrm{S}_{p,l,q}(k)}{(k+1)(k+2)\cdots(k+q)},
\end{align}
where $\mathrm{S}_{p,l,q}(k)$ is some polynomial in $k$ of degree $\leq 6p+q$.
\end{proposition}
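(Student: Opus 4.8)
The plan is to start from the closed formula \eqref{eq:AcoefsQ} of Proposition~\ref{P:uEcoeff} and to divide $[\hbar^{q+p}][E_{l-q,l}]\check{\mathcal{A}}(k,\hbar)$ by the prefactor $\tfrac{(mk-m)!}{k!\,(mk-k-1)!}\cdot\big(\prod_{m\le j\le 2p-1,\,m\nmid j}(k-\tfrac{j}{m})\big)^{-1}\cdot\big((k+1)\cdots(k+q)\big)^{-1}$ appearing in \eqref{eq:AcoefsPoly}; proving the proposition then reduces to showing that the resulting quantity $\mathrm{S}_{p,l,q}(k)$ is a genuine polynomial in $k$ of degree $\le 6p+q$. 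The essential preliminary observation is that the only factor in \eqref{eq:AcoefsQ} whose number of factors grows with $k$ is $(mk-\sigma)_{k+q-\sigma}=(mk-\sigma)!/(mk-k-q)!$, and that upon forming the ratio with $(mk-m)!/\big(k!\,(mk-k-1)!\big)$ this variable length collapses into three \emph{fixed}-length factorial ratios, namely $(mk-\sigma)!/(mk-m)!$, $(mk-k-1)!/(mk-k-q)!$ and $(k+q)!/(k+q-\sigma)!$, of degrees $m-\sigma$, $q-1$ and $\sigma$ respectively. It is also convenient to extend the summation in \eqref{eq:AcoefsQ} to the full range $0\le\sigma\le 2p$: the extra terms vanish identically, because $(k+q)!/(k+q-\sigma)!=(k+q)(k+q-1)\cdots(k+q-\sigma+1)$ contains a zero factor as soon as $\sigma>k+q$, which is exactly the truncation already present in \eqref{eq:AcoefsQ}.

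After this rewriting, the summand of $\mathrm{S}_{p,l,q}(k)$ indexed by $\sigma$ and $s_1+\dots+s_m=\sigma$ becomes
\[
Q^p_{s_1,\dots,s_m}(k,l)\cdot\frac{(k)_{s_1}\cdots(k)_{s_m}}{k}\cdot\frac{(k+q)!}{(k+q-\sigma)!}\cdot\frac{(mk-k-1)!}{(mk-k-q)!}\cdot\frac{(mk-\sigma)!}{(mk-m)!}\cdot\!\!\prod_{\substack{m\le j\le 2p-1\\ m\nmid j}}\!\!\Big(k-\frac{j}{m}\Big).
\]
I would first check that $Q^p_{s_1,\dots,s_m}(k,l)\,(k)_{s_1}\cdots(k)_{s_m}/k$ is already a polynomial: for $\sigma>0$ some $s_j\ge 1$ provides a factor $k$ in the numerator, while for $\sigma=0$ the polynomial $Q^p_{0,\dots,0}$ is divisible by $k$ by the last assertion of Proposition~\ref{P:uEcoeff}. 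Consequently the only honest denominators arise from $(mk-\sigma)!/(mk-m)!$ in the range $\sigma>m$, where this factor equals $1/\prod_{j=m}^{\sigma-1}(mk-j)$.

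The heart of the matter, and the step I expect to be the main obstacle, is to show that these denominators cancel. I split the factors $mk-j$ for $m\le j\le\sigma-1$ according to whether $m\nmid j$ or $m\mid j$. Since $\sigma-1\le 2p-1$, the factors with $m\nmid j$ are exactly matched (up to the harmless constant $m^{\sigma-m}$) by the corresponding factors $k-j/m$ of the $\xi$-denominator $\prod_{m\le j\le 2p-1,\,m\nmid j}(k-j/m)$. The factors with $m\mid j$, that is $mk-rm=m(k-r)$ for $1\le r\le R:=\lfloor(\sigma-1)/m\rfloor$, are integer-root factors and must instead be absorbed by the product $(k)_{s_1}\cdots(k)_{s_m}$. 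Here a pigeonhole argument is decisive: since $s_1+\dots+s_m=\sigma$, some index $j^\ast$ satisfies $s_{j^\ast}\ge\lceil\sigma/m\rceil=R+1$, and hence $(k)_{s_{j^\ast}}=k(k-1)\cdots(k-s_{j^\ast}+1)$ contains \emph{all} the factors $(k-1),(k-2),\dots,(k-R)$ at once, cancelling the leftover denominator (the single factor $k$ consumed by the $1/k$ being accounted for separately). This establishes that each summand, and therefore $\mathrm{S}_{p,l,q}(k)$, is a polynomial.

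It remains to bound the degree, which is a routine count. Adding the degrees of the fixed-length factors of a single summand gives at most $2p+(\sigma-1)+\sigma+(q-1)+(m-\sigma)+(2p-m)$, where the last term bounds the degree of the $\xi$-denominator by the number of integers in $[m,2p-1]$. Using $\sigma\le 2p$ this simplifies to $4p+\sigma+q-2\le 6p+q-2$, uniformly in $\sigma$ and $(s_1,\dots,s_m)$. Hence the sum $\mathrm{S}_{p,l,q}(k)$ is a polynomial of degree $\le 6p+q$, which completes the proof.
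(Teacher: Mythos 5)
Your proposal is correct and follows essentially the same route as the paper's proof: the same splitting of $(mk-\sigma)_{k+q-\sigma}$ into fixed-length factorial ratios against the prefactor $\tfrac{(mk-m)!}{k!\,(mk-k-1)!}$, the same separation of the factors $mk-j$ according to whether $m\mid j$, the same pigeonhole argument using $s_{j^\ast}\ge \lceil\sigma/m\rceil$ to absorb the integer-root factors into $(k)_{s_{j^\ast}}$, and the same use of the divisibility of $Q^p_{0,\dots,0}$ by $k$ to kill the $1/k$. Your treatment is somewhat more uniform across the cases (which the paper dismisses as ``analogous, only easier'') and your degree count is more explicit, but the argument is the one in the paper.
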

\begin{proof} 
Let us look at equation \eqref{eq:AcoefsQ}.	For $q\geq \sigma$ and $\sigma \geq m$ we have
\begin{align}
\label{eq:prop4.6}
&\dfrac{(k)_{s_1}\cdots(k)_{s_m}}{(q+k-\sigma)!}\;(mk-\sigma)_{k+q-\sigma}\\ \nonumber
&=\dfrac{(mk-m)!}{k!\,(mk-k-1)!} \cdot (k)_{s_1}\cdots(k)_{s_m} \cdot \dfrac{1}{(k+1)\cdots(k+q-\sigma)}\\ \nonumber
&\phantom{= {}}
\times\dfrac{(mk-k-1)!}{(mk-\sigma+1)(mk-\sigma+2)\cdots(mk-m)}\cdot\dfrac{1}{(mk-k-q)!}\\ \nonumber
&=\dfrac{(mk-m)!}{k!\,(mk-k-1)!} \cdot (k)_{s_1}\cdots(k)_{s_m} \cdot \dfrac{1}{(k+1)\cdots(k+q-\sigma)}\\ \nonumber
&\phantom{= {}}
\times\dfrac{m^{m-\sigma}\,(mk-k-1)_{q-1}}{\displaystyle\prod_{\substack{m\leq j \leq \sigma-1 \\ m\nmid j}} \left(k-\dfrac{j}{m}\right)}\cdot \dfrac{1}{\displaystyle\prod_{1\leq r\leq \lfloor(\sigma-1)/m\rfloor} \left(k-r\right)}
\end{align}
Since $s_1+\ldots+s_m=\sigma$, for at least one $j$ we have $s_j\geq\sigma/m$. This means that for this~$j$ all product terms in $\prod_{1\leq r\leq \lfloor(\sigma-1)/m\rfloor} \left(k-r\right)$ get cancelled with terms from $(k)_{s_j}$.

Thus any term from the sum in the RHS of \eqref{eq:AcoefsQ} for $q\geq \sigma$ and $\sigma \geq m$ is in fact of the form as in the RHS of \eqref{eq:AcoefsPoly}. The cases when $q$,~$\sigma$ and $\sigma$,~$m$ are related in three other possible ways are almost completely analogous, only easier (as for e.g. $\sigma<m$ the extra poles do not even appear).

What remains is to see that the factor $1/k$ from \eqref{eq:AcoefsQ} gets canceled. For $\sigma>0$ at least one of the $s_j$ is greater than zero, and thus the product $(k)_{s_1}\cdots(k)_{s_m}$ is divisible by $k$ which cancels $1/k$. For $\sigma=0$ we have $\forall j\; s_j=0$, and thus this term comes with $Q^p_{0,\dots,0}(k,l)$ which is divisible by $k$ as stated in Proposition \ref{P:uEcoeff}, which, again, cancels the $1/k$ factor. 

Finally, the upper bound of $6p+q$ on the degree of $\mathrm{S}_{p,l,q}(k)$ comes from the fact that polynomials $Q^p_{s_1,\dots,s_m}$ have degree $\leq 2p$ and from comparing \eqref{eq:prop4.6} with  \eqref{eq:AcoefsPoly} and \eqref{eq:AcoefsQ}.
%
\end{proof}

\begin{proposition}\label{P:uEIDcoeff}
	For any  $p\in \mathbb{Z}_{\geq 0}$ we have:
	\begin{align}\label{eq:AIDcoefsQ}
	[\hbar^{p}][\Id]\check{\mathcal{A}}(k,u) = \sum_{\substack{0\leq\sigma\leq \min(2p+2,\,k-1) \\ s_1+\dots+s_m=\sigma}}Q^{p+1}_{s_1,\dots,s_m}(k,-1/2)\dfrac{(k)_{s_1}\cdots(k)_{s_m}}{k^2(k-1-\sigma)!}\;(mk-\sigma)_{k-1-\sigma},
	\end{align}
	where $Q^p_{s_1,\dots,s_m}(k,l)$ are the same polynomials in $k$ and $l$ as in Proposition \ref{P:uEcoeff}.
\end{proposition}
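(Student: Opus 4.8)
The plan is to mirror the proof of Proposition~\ref{P:uEcoeff} almost verbatim, the only new inputs being the explicit $\Id$-term in formula~\eqref{eq:checkA+Id} and the expansion of $\Delta^t P^m_k$ supplied by Proposition~\ref{prop:deltapexr}. Reading off~\eqref{eq:checkA+Id}, the identity part of the operator is
\[
[\Id]\check{\mathcal{A}}(k,\hbar)=\frac{\hbar^{-k}}{k}\,\frac{(\Delta^{k-1}\,P^m_k)(l)\big|_{l=-1/2}}{k!},
\]
so the whole task reduces to extracting $[\hbar^p]$ from this single scalar, with $l$ specialized to $-1/2$.

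First I would apply Proposition~\ref{prop:deltapexr} with $t=k-1$. Writing $\Delta^{k-1}P^m_k/k!=\tfrac1k\,\Delta^{k-1}P^m_k/(k-1)!$, it produces an overall $\hbar^{k-1}$ together with a sum over $i_1+\dots+i_m=k-1$ of $\prod_{j=1}^m \tfrac{(k)_{i_j}}{i_j!}\,P_{k-i_j}(l-i_{j+1}-\dots-i_m)$, the latter product being exactly $R(\hbar;k,l,i_1,\dots,i_m)$ of~\eqref{eq:Rdef}. Two bookkeeping points distinguish this from Proposition~\ref{P:uEcoeff}: the prefactor $\hbar^{-k}\cdot\hbar^{k-1}=\hbar^{-1}$ means that taking $[\hbar^p]$ of the full expression is the same as taking $[\hbar^{p+1}]R$, i.e.\ the coefficient $R_{p+1}$ of~\eqref{eq:Puexp} appears (hence the polynomials $Q^{p+1}$ rather than $Q^p$); and the extra $\tfrac1k$ coming from $(k-1)!/k!$ combines with the $\tfrac1k$ already present to give the $\tfrac{1}{k^2}$ in~\eqref{eq:AIDcoefsQ}. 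At this stage one has
\[
[\hbar^p][\Id]\check{\mathcal{A}}(k,\hbar)=\frac{1}{k^2}\sum_{i_1+\dots+i_m=k-1}\prod_{j=1}^m\frac{(k)_{i_j}}{i_j!}\,R_{p+1}(k,-1/2,i_1,\dots,i_m).
\]

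Next I would substitute the $Q$-basis expansion~\eqref{eq:RQexpan} for $R_{p+1}$ evaluated at $l=-1/2$, interchange the order of summation, and for each fixed multi-index $(s_1,\dots,s_m)$ collapse the inner sum over $i_1+\dots+i_m=k-1$. This is the only genuine manipulation, and it is identical to the one in Proposition~\ref{P:uEcoeff}: using $(k)_{i_j}(i_j)_{s_j}/i_j!=(k)_{s_j}(k-s_j)_{i_j-s_j}/(i_j-s_j)!$ followed by the falling-factorial multinomial identity yields $\prod_{j}(k)_{s_j}\cdot(mk-\sigma)_{k-1-\sigma}/(k-1-\sigma)!$, where $\sigma=s_1+\dots+s_m$. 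Terms with $\sigma>k-1$ drop out because the shifted constraint $i_1'+\dots+i_m'=k-1-\sigma$ has no solutions (equivalently some $(i_j)_{s_j}$ vanishes), which produces the upper summation bound $\min(2p+2,k-1)$; the value $2p+2$ is simply the total degree of $R_{p+1}$. Assembling the factors gives exactly~\eqref{eq:AIDcoefsQ}.

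I do not expect a serious obstacle: structurally the statement is the $\Id$-sector analogue of Proposition~\ref{P:uEcoeff}, the summation $i_1+\dots+i_m=k-1$ and the falling factorial $(mk-\sigma)_{k-1-\sigma}$ being precisely what one obtains by formally setting $q=-1$ there. The genuinely distinct features, demanding the only real care, are the $\hbar$-degree bookkeeping—tracking how the $\hbar^{-k}$ prefactor combines with the $\hbar^{k-1}$ emitted by $\Delta^{k-1}$ to leave a net $\hbar^{-1}$, so that extracting $[\hbar^p]$ brings in $R_{p+1}$ and hence $Q^{p+1}$—the extra $1/k$ from $(k-1)!/k!$ that upgrades $1/k$ to $1/k^2$, and the consistent specialization $l=-1/2$ throughout.
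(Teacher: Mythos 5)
Your proof is correct and follows essentially the same route as the paper's: apply Proposition~\ref{prop:deltapexr} with $t=k-1$ at $l=-1/2$, observe that $\hbar^{-k}\cdot\hbar^{k-1}=\hbar^{-1}$ shifts the coefficient extraction to $R_{p+1}$ (hence $Q^{p+1}$) and that $(k-1)!/k!$ supplies the second factor of $1/k$, then expand in the $Q$-basis and collapse the inner sum via the falling-factorial multinomial identity. All the bookkeeping points you flag (the $1/k^2$, the $\min(2p+2,k-1)$ cutoff from the vanishing of $(i_j)_{s_j}$, the specialization $l=-1/2$) match the paper's computation in~\eqref{eq:AIDEcoef} exactly.
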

\begin{proof}
	
Recall equation \eqref{eq:RQexpan} and Proposition \ref{prop:deltapexr}. Analogous to the proof of Proposition \ref{P:uEcoeff}, we have
	\begin{align} \label{eq:AIDEcoef}
	&[\hbar^{p}][\Id]\check{\mathcal{A}}(k,\hbar)=[\hbar^{p}] \frac{\hbar^{-k}}{k} \frac{(\Delta^{k-1}\,P^m_k)(l)\vert_{l=-1/2}}{k!}\\ \nonumber
	&= 
	[\hbar^{p+1}]\sum\limits_{i_1+\ldots+i_m=k-1} \frac{1}{k^2} \prod\limits_{j=1}^{m} \frac{(k)_{i_j}}{i_j!} P_{k-i_j}(-1/2-i_{j+1}-\ldots-i_m) \\ \nonumber
	&=\frac{1}{k^2}\sum\limits_{i_1+\ldots+i_m=k-1}	R_{p+1}(k,-1/2,i_1,\dots,i_m) \prod\limits_{j=1}^{m} \frac{(k)_{i_j}}{i_j!} \\ \nonumber
	&=\frac{1}{k^2}\sum\limits_{i_1+\ldots+i_m=k-1} \;	 \sum_{\sigma=0}^{2p+2}\;\sum_{s_1+\dots+s_m=\sigma}Q^{p+1}_{s_1,\dots,s_m}(k,-1/2)\prod\limits_{j=1}^{m} \dfrac{(k)_{i_j}\cdot (i_j)_{s_j}}{i_j!}\\ \nonumber
	&= \frac{1}{k^2}\sum_{\substack{0\leq\sigma\leq \min(2p+2,\,k-1) \\ s_1+\dots+s_m=\sigma}}Q^{p+1}_{s_1,\dots,s_m}(k,-1/2)\cdot(k)_{s_1}\cdots(k)_{s_m}\sum\limits_{i_1+\ldots+i_m=k-1}\;\prod\limits_{j=1}^{m} \dfrac{(k-s_j)_{i_j-s_j}}{(i_j-s_j)!}\\ \nonumber
	&= \frac{1}{k^2}\sum_{\substack{0\leq\sigma\leq \min(2p+2,\,k-1) \\ s_1+\dots+s_m=\sigma}}Q^{p+1}_{s_1,\dots,s_m}(k,-1/2)\cdot(k)_{s_1}\cdots(k)_{s_m} \cdot \dfrac{(mk-\sigma)_{k-1-\sigma}}{(k-1-\sigma)!}.
	\end{align}
	This proves the proposition.
\end{proof}


\begin{proposition}\label{prop:AIDcoefsPoly} We have:
	\begin{equation}\label{eq:AIDcoefsPoly}
	[\hbar^{p}][\Id]\check{\mathcal{A}}(k,\hbar)=\dfrac{(mk-m)!}{k!\,(mk-k-1)!} \cdot \dfrac{1}{\displaystyle\prod_{\substack{m\leq j \leq 2p+1 \\ m\nmid j}} \left(k-\dfrac{j}{m}\right)}\cdot\dfrac{\mathrm{S}^{\Id}_{p}(k)}{k^2\, (mk-k+1)},
	\end{equation}
	where $\mathrm{S}^{\Id}_{p}(k)$ is some polynomial in $k$.
\end{proposition}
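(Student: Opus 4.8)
The plan is to run the proof of Proposition~\ref{prop:AcoefsPoly} essentially verbatim, now starting from the identity-operator formula \eqref{eq:AIDcoefsQ} of Proposition~\ref{P:uEIDcoeff} in place of \eqref{eq:AcoefsQ}. Comparing the two inputs, the summand of \eqref{eq:AIDcoefsQ} is obtained from that of \eqref{eq:AcoefsQ} by the formal specialisation ``$q=-1$'' in the factorial part $\tfrac{1}{(q+k-\sigma)!}(mk-\sigma)_{k+q-\sigma}$, together with three harmless modifications: the prefactor is $1/k^2$ rather than $1/k$; the polynomial is $Q^{p+1}_{s_1,\dots,s_m}(k,-1/2)$, so that $\sigma$ now runs up to $2p+2$ and hence $\sigma-1\le 2p+1$; and $l$ is frozen at $-1/2$. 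The combinatorial mechanism is unchanged; only the bookkeeping of the factorials differs.

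First I would rewrite, for each summand and for $\sigma\ge m$, the factor $\tfrac{(k)_{s_1}\cdots(k)_{s_m}}{(k-1-\sigma)!}(mk-\sigma)_{k-1-\sigma}$ exactly as in \eqref{eq:prop4.6}. Using $(mk-\sigma)_{k-1-\sigma}=\tfrac{(mk-\sigma)!}{(mk-k+1)!}$, this yields
\begin{align*}
\frac{(k)_{s_1}\cdots(k)_{s_m}}{(k-1-\sigma)!}(mk-\sigma)_{k-1-\sigma}
&= \frac{(mk-m)!}{k!\,(mk-k-1)!}\,(k)_{s_1}\cdots(k)_{s_m}\,(k)_{\sigma+1} \\
&\phantom{{}={}}\times\frac{m^{m-\sigma}}{\displaystyle\prod_{\substack{m\le j\le\sigma-1\\ m\nmid j}}\Big(k-\tfrac{j}{m}\Big)\prod_{1\le r\le\lfloor(\sigma-1)/m\rfloor}(k-r)}\cdot\frac{1}{(mk-k+1)(mk-k)} .
\end{align*}
The two genuine differences with \eqref{eq:prop4.6} are explained by \eqref{eq:defPoch}: the factor $\tfrac{k!}{(q+k-\sigma)!}$ of \eqref{eq:prop4.6} becomes, at $q=-1$, the numerator falling factorial $\tfrac{k!}{(k-1-\sigma)!}=(k)_{\sigma+1}$, while the numerator $(mk-k-1)_{q-1}$ becomes $(mk-k-1)_{-2}=\tfrac{1}{(mk-k)(mk-k+1)}$.

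I would then collect denominators as in Proposition~\ref{prop:AcoefsPoly}. As there, the spurious poles $\prod_{1\le r\le\lfloor(\sigma-1)/m\rfloor}(k-r)$ cancel against $(k)_{s_j}$ for the index $j$ with $s_j\ge\sigma/m$, and since $\sigma-1\le 2p+1$ the surviving poles $\prod_{\substack{m\le j\le\sigma-1,\,m\nmid j}}(k-\tfrac jm)$ embed into $\prod_{\substack{m\le j\le 2p+1,\,m\nmid j}}(k-\tfrac jm)$, producing the pole factor of \eqref{eq:AIDcoefsPoly} after passing to a common denominator. The point specific to the identity case is the extra power of $k$: writing $mk-k=(m-1)k$, the numerator factor $(k)_{\sigma+1}=k(k-1)\cdots(k-\sigma)$ absorbs it, since $(k)_{\sigma+1}/(mk-k)=\tfrac{1}{m-1}(k-1)\cdots(k-\sigma)$ is a polynomial, leaving precisely the global $1/k^2$ and the factor $1/(mk-k+1)$ demanded by \eqref{eq:AIDcoefsPoly}. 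Summing the finitely many terms produces the required polynomial $\mathrm{S}^{\Id}_{p}(k)$. The ranges $\sigma<m$ are handled exactly as the analogous easier cases of Proposition~\ref{prop:AcoefsPoly}: the spurious poles are absent and $\tfrac{(mk-\sigma)!}{(mk-m)!}$ enters the numerator as a polynomial (for $\sigma=0$ as $(mk)_m$).

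The part needing the most care --- and the only place the argument truly departs from Proposition~\ref{prop:AcoefsPoly} --- is this power-of-$k$ accounting. Unlike the $E_{l-q,l}$ case, where the single $1/k$ had to be killed using the divisibility of $Q^{p}_{0,\dots,0}$ by $k$, here the factor $(k)_{\sigma+1}$ is always present and divisible by $k$, so it automatically cancels the $k$ coming from $mk-k=(m-1)k$, and \emph{no divisibility hypothesis on $Q^{p+1}_{0,\dots,0}$ is needed}. I would verify that this cancellation is exact --- so that the final denominator is $k^2(mk-k+1)$ and not $k^3$ or $k\,(mk-k+1)$ --- paying particular attention to the boundary case $\sigma=0$, where it reduces to the identity $(k)_{1}/(mk-k)=1/(m-1)$.
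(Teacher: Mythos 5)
Your proof is correct and follows essentially the same route as the paper: the same rewriting of $\tfrac{(k)_{s_1}\cdots(k)_{s_m}}{(k-1-\sigma)!}(mk-\sigma)_{k-1-\sigma}$ as in \eqref{eq:prop4.6} with the extra factor $(k)_{\sigma+1}$, the same absorption of $mk-k=(m-1)k$ by the leading $k$ of $(k)_{\sigma+1}$, and the same observation that no divisibility of $Q^{p+1}_{0,\dots,0}$ is needed here. The only (immaterial) difference is that you cancel the spurious integer poles $\prod_{1\le r\le\lfloor(\sigma-1)/m\rfloor}(k-r)$ against $(k)_{s_j}$ as in Proposition~\ref{prop:AcoefsPoly}, whereas the paper cancels them against the remaining factors $(k-1)\cdots(k-\sigma)$ of $(k)_{\sigma+1}$; both cancellations are valid.
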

\begin{proof} 
	Let us look at equation \eqref{eq:AIDcoefsQ}. For $\sigma\geq m$ we have
	\begin{align}
	\label{eq:IDprop4.6}
	&\dfrac{(k)_{s_1}\cdots(k)_{s_m}}{(k-1-\sigma)!}\;(mk-\sigma)_{k-1-\sigma}\\ \nonumber
	&=\dfrac{(mk-m)!}{k!\,(mk-k-1)!} \cdot (k)_{s_1}\cdots(k)_{s_m} \cdot (k-\sigma)\cdots (k-1)\, k\\ \nonumber
	&\phantom{= {}}\times\dfrac{(mk-k-1)!}{(mk-\sigma+1)(mk-\sigma+2)\cdots(mk-m)}\cdot\dfrac{1}{(mk-k+1)!}\\ \nonumber
	&=\dfrac{(mk-m)!}{k!\,(mk-k-1)!} \cdot (k)_{s_1}\cdots(k)_{s_m} \cdot (k-\sigma)\cdots (k-1)\, k\\ \nonumber
	&\phantom{= {}}\times\dfrac{m^{m-\sigma}}{\displaystyle\prod_{\substack{m\leq j \leq \sigma-1 \\ m\nmid j}} \left(k-\dfrac{j}{m}\right)}\cdot \dfrac{1}{\displaystyle\prod_{1\leq r\leq \lfloor(\sigma-1)/m\rfloor} \left(k-r\right)}\cdot\dfrac{1}{(mk-k)\,(mk-k+1)}\\ \nonumber
	&=\dfrac{(mk-m)!}{k!\,(mk-k-1)!} \cdot (k)_{s_1}\cdots(k)_{s_m} \\ \nonumber
	&\phantom{= {}}\times\dfrac{m^{m-\sigma}}{\displaystyle\prod_{\substack{m\leq j \leq \sigma-1 \\ m\nmid j}} \left(k-\dfrac{j}{m}\right)}\cdot \left(\displaystyle\prod_{\lfloor(\sigma-1)/m\rfloor<r\leq\sigma} \left(k-r\right)\right)\cdot\dfrac{1}{(m-1)\,(mk-k+1)}
	\end{align}
	Thus any term from the sum in the RHS of \eqref{eq:AIDcoefsQ} for $\sigma \geq m$ is in fact of the form as in the RHS of \eqref{eq:AIDcoefsPoly}.

	The case when $\sigma<m$ is almost completely analogous, only easier (as in that case the extra poles at $j/m$ do not even appear). This proves the proposition.
\end{proof}

\subsubsection{Further technical statements}
In order to continue we need a further couple of technical propositions.
\begin{proposition}\label{prop:DeltaRAct}
	We have:
	\begin{align}\label{eq:DeltaR}
	&
	\Delta_{i_j} R(\hbar;k,-1/2,i_1,\dots,i_m) 
	\\ \nonumber &
	=\left(1-(1+\hbar(k-i_j-\dots-i_m))\cdot\prod_{q=1}^{j-1}\dfrac{1+\hbar(k-i_q-\dots-i_m)}{1+\hbar(-i_{q+1}-\dots-i_m)}\right)\\ \notag
	& \phantom{= {}} \times R(\hbar;k,-1/2,i_1,\dots,i_m),
	\end{align}
	where $\Delta_{i_j}$ is the backward difference operator in variable $i_j$ which acts on a function $f(i_1,\dots,i_m)$ in the following way:
	\begin{equation}
	\Delta_{i_j} f(i_1,\dots,i_m) \coloneqq f(i_1,\dots,i_{j-1},i_j,i_{j+1},\dots,i_m) - f(i_1,\dots,i_{j-1},i_j-1,i_{j+1},\dots,i_m).
	\end{equation}
\end{proposition}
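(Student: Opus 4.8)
The plan is to compute $\Delta_{i_j}$ acting on $R$ directly from the product formula~\eqref{eq:Rdef}, since the backward difference has a clean multiplicative structure here. First I would write out $R(\hbar;k,-1/2,i_1,\dots,i_m)$ as the double product
\[
R = \prod_{j'=1}^{m}\prod_{\gamma_{j'}=0}^{k-i_{j'}-1}\bigl(1+\hbar(-1/2-i_{j'+1}-\dots-i_m+\gamma_{j'}+1/2)\bigr),
\]
and observe that the variable $i_j$ appears in this expression in two distinct ways: it controls the \emph{upper limit} $k-i_j-1$ of the inner product over $\gamma_j$ in the factor with $j'=j$, and it appears additively \emph{inside} the arguments of every factor with $j'<j$ (because those carry the shift $-i_{j+1}-\dots$ only for indices past $j'$; more precisely $i_j$ enters the $j'$-th block whenever $j'<j$, via the subscript range $i_{j'+1}+\dots+i_m$).

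The key computation is to form the ratio $R(\dots,i_j,\dots)/R(\dots,i_j-1,\dots)$, after which $\Delta_{i_j}R = (1 - R_{\text{shift}}/R)\cdot R$ where $R_{\text{shift}}$ is $R$ evaluated at $i_j-1$. Decreasing $i_j$ by one has two effects that I would track separately. From the upper-limit dependence in the $j'=j$ block, lowering $i_j$ to $i_j-1$ \emph{adds} one more factor to the product over $\gamma_j$, namely the term with $\gamma_j=k-i_j$, whose argument simplifies (using the telescoping of $-1/2+\gamma_j+1/2$) to $1+\hbar(k-i_j-i_{j+1}-\dots-i_m)$; this accounts for the standalone factor $(1+\hbar(k-i_j-\dots-i_m))$ in~\eqref{eq:DeltaR}. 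From the additive dependence in the blocks $j'<j$, shifting $i_j\mapsto i_j-1$ shifts every argument in those blocks by $+\hbar$, and the ratio of the shifted block to the original telescopes to $\tfrac{1+\hbar(k-i_{j'}-\dots-i_m)}{1+\hbar(-i_{j'+1}-\dots-i_m)}$ for each $j'=1,\dots,j-1$ — precisely the product $\prod_{q=1}^{j-1}$ appearing in the statement.

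Concretely, I would verify each block ratio by noting that a product $\prod_{\gamma=0}^{N-1}(1+\hbar(a+\gamma))$, when $a$ is increased to $a+1$, changes by the factor $(1+\hbar(a+N))/(1+\hbar a)$ — a standard telescoping of a shifted arithmetic-progression product. Applying this with $a$ the base argument of block $j'$ and $N=k-i_{j'}$ gives exactly the $q$-th factor after identifying $a+N = k-i_{j'}-i_{j'+1}-\dots-i_m$ and $a = -i_{j'+1}-\dots-i_m$ (the $\pm 1/2$ cancel as noted). Assembling the upper-limit contribution and the $j-1$ block-ratio contributions yields $R_{\text{shift}}/R = (1+\hbar(k-i_j-\dots-i_m))\prod_{q=1}^{j-1}\tfrac{1+\hbar(k-i_q-\dots-i_m)}{1+\hbar(-i_{q+1}-\dots-i_m)}$, and then $\Delta_{i_j}R = (1-R_{\text{shift}}/R)R$ gives~\eqref{eq:DeltaR}.

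I expect the main obstacle to be purely bookkeeping: keeping the nested tail-sums $i_{j+1}+\dots+i_m$ straight across the two different roles of $i_j$, and confirming that the upper-limit factor is evaluated at the correct argument after the $\gamma_j=k-i_j$ substitution. The telescoping ratios themselves are elementary, so the content of the proof is entirely in a careful, index-correct manipulation of the product~\eqref{eq:Rdef}; there is no genuine analytic or combinatorial difficulty beyond organizing these two contributions.
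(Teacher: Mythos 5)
Your proposal is correct and follows essentially the same route as the paper's own proof: both track the two effects of the shift $i_j\mapsto i_j-1$, namely the extra factor $(1+\hbar(k-i_j-\dots-i_m))$ from the enlarged upper limit in the $j$-th block and the telescoping ratios $\tfrac{1+\hbar(k-i_q-\dots-i_m)}{1+\hbar(-i_{q+1}-\dots-i_m)}$ from the shifted arguments in the blocks $q<j$. Your index bookkeeping (the roles of $i_j$ in the upper limit versus in the tail sums, and the evaluation of the new factor at $\gamma_j=k-i_j$) checks out.
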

\begin{proof}
	From the definition of $R(\hbar;k,l,i_1,\dots,i_m)$, i.e.  formula \eqref{eq:Rdef}, we have
	\begin{equation}
	R(\hbar;k,-1/2,i_1,\dots,i_m)=\prod\limits_{n=1}^{m}  \prod\limits_{\gamma_n=0}^{k-i_n-1}(1+\hbar(\gamma_n-i_{n+1}-\ldots -i_m))
	\end{equation}
	From the definition of $\Delta_{i_j}$ we have
	\begin{align}
	\Delta_{i_j} R(\hbar;k,-1/2,i_1,\dots,i_m) & = R(\hbar;k,-1/2,i_1,\dots,i_j,\dots,i_m)
	\\ \nonumber
	&\phantom{= {}} -R(\hbar;k,-1/2,i_1,\dots,i_j-1,\dots,i_m).
	\end{align}
	Note that the polynomial $R(\hbar;k,-1/2,i_1,\dots,i_j-1,\dots,i_m)$ has mostly the same factors as $R(\hbar;k,-1/2,i_1,\dots,i_j,\dots,i_m)$, apart from an extra factor of $(1+\hbar(k-i_j-\dots-i_m))$ coming from the increased upper product limit in the $j$-th product, and also the factors 
	\begin{equation}
	\dfrac{1+\hbar(k-i_q-\dots-i_m)}{1+\hbar(-i_{q+1}-\dots-i_m)}
	\end{equation}
	for each $q$-th product for $q=1,\dots,j-1$, since shifting $i_j$ by $-1$ in the factors where it is present is the same as shifting both lower and upper product limits by $+1$ in the corresponding products. Thus we obtain \eqref{eq:DeltaR}, which proves the proposition.
\end{proof}

\begin{proposition}\label{prop:EulerRAct}
	We have
	\begin{equation}\label{eq:ER}
	\big(\mathfrak{E}\, R(\hbar;k,-1/2,i_1,\dots,i_m) \big)\Big|_{i_1=\dots=i_m=k} =  \binom{m}{2}\; k^2 \hbar,
	\end{equation}
	where $\mathfrak{E}$ is the discrete Euler operator in the $i$-variables defined as
	\begin{equation}\label{eq:EulerI}
	\mathfrak{E}:=\sum_{j=1}^m i_j\, \Delta_{i_j}.
	\end{equation}
\end{proposition}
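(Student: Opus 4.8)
The plan is to apply Proposition~\ref{prop:DeltaRAct} term by term and then specialize. Writing $\mathfrak{E}=\sum_{j=1}^m i_j\,\Delta_{i_j}$, that proposition expresses each $\Delta_{i_j}R$ as an explicit rational prefactor times $R$ itself, so that, as a function of $i_1,\dots,i_m$,
\[
\mathfrak{E}R = \sum_{j=1}^m i_j\left(1-(1+\hbar(k-i_j-\dots-i_m))\prod_{q=1}^{j-1}\frac{1+\hbar(k-i_q-\dots-i_m)}{1+\hbar(-i_{q+1}-\dots-i_m)}\right) R.
\]
The whole task then reduces to evaluating this at $i_1=\dots=i_m=k$, and the point of the argument is that at this locus all three ingredients collapse.

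First, at $i_1=\dots=i_m=k$ every factor $P_{k-i_j}$ in $R$ becomes $P_0$, an empty product, so $R|_{i=k}=1$ and the overall prefactor drops out. Second — and this is the key observation — each ratio in the product over $q$ equals $1$ there: setting $i_q=k$ makes $k-i_q=0$, so the numerator argument $k-i_q-\dots-i_m$ coincides with the denominator argument $-i_{q+1}-\dots-i_m$, and the entire product telescopes to $1$. Third, the surviving factor becomes $1+\hbar(k-i_j-\dots-i_m)\big|_{i=k}=1-\hbar k(m-j)$, since $i_j+\dots+i_m=(m-j+1)k$. Hence the bracket for index $j$ reduces to $1-(1-\hbar k(m-j))=\hbar k(m-j)$, and with $i_j=k$ the $j$-th summand becomes $\hbar k^2(m-j)$.

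Summing over $j$ then gives $\mathfrak{E}R\big|_{i=k}=\hbar k^2\sum_{j=1}^m(m-j)=\hbar k^2\binom{m}{2}$, which is exactly the claimed $\binom{m}{2}k^2\hbar$. I do not expect a genuine obstacle: once Proposition~\ref{prop:DeltaRAct} is in hand the computation is essentially forced. The only point requiring care is bookkeeping — one must apply each backward difference $\Delta_{i_j}$ to $R$ viewed as a function of all the $i$-variables first, and only afterwards restrict to the diagonal $i_1=\dots=i_m=k$; substituting before differencing would be meaningless. The one slightly delicate conceptual step is recognizing the telescoping of the $q$-product, which is precisely what turns an a priori complicated rational expression into the clean linear factor $\hbar k(m-j)$.
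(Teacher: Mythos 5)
Your proof is correct and follows essentially the same route as the paper's: apply Proposition~\ref{prop:DeltaRAct}, specialize to $i_1=\dots=i_m=k$, observe that each ratio in the $q$-product equals $1$ (since $i_q=k$ makes the numerator and denominator arguments coincide) and that $R$ itself becomes an empty product equal to $1$, and sum $\hbar k^2(m-j)$ over $j$. The only cosmetic remark is that the $q$-product does not really ``telescope''---each factor is individually equal to $1$ at the specialization, exactly as your own justification shows.
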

\begin{proof}
	From Proposition \ref{prop:DeltaRAct} we have 
	\begin{align}
	&\mathfrak{E}\, R(\hbar;k,-1/2,i_1,\dots,i_m)\\ \nonumber
	&=\left(\sum_{j=1}^m i_j \left(1-(1+\hbar(k-i_j-\dots-i_m))\cdot\prod_{q=1}^{j-1}\dfrac{1+\hbar(k-i_q-\dots-i_m)}{1+\hbar(-i_{q+1}-\dots-i_m)}\right)\right) \\ \nonumber &\phantom{= {}}\times R(\hbar;k,-1/2,i_1,\dots,i_m)
	\end{align}
	Once we plug $i_j=k$ for every $j$ we get
	\begin{align}
	&\big(\mathfrak{E}\, R(\hbar;k,-1/2,i_1,\dots,i_m) \big)\Big|_{i_1=\dots=i_m=k}\\ \nonumber
	&=\left(\sum_{j=1}^m k \left(1-(1+\hbar(j-m)k)\cdot\prod_{q=1}^{j-1}\dfrac{1+\hbar(q-m)k}{1+\hbar(q-m)k}\right)\right) R(\hbar;k,-1/2,k,\dots,k) \\ \nonumber
	&=\left(\sum_{j=1}^m k^2 \hbar (m-j)\right) R(\hbar;k,-1/2,k,\dots,k)\\ \nonumber
	&=\dfrac{m(m-1)}{2}\; k^2 \hbar\; R(\hbar;k,-1/2,k,\dots,k)\\ \nonumber
	&=\binom{m}{2}\; k^2 \hbar
	\end{align}
	The last equality holds since $R(\hbar;k,-1/2,k,\dots,k)=1$ as for these values of its arguments $R$ is a product of zero terms and thus equal to $1$.
\end{proof}

\begin{proposition}\label{prop:IdDivis}
	For $p\geq 0$ the polynomial $\mathrm{S}^{\Id}_p(k)$ from Proposition \ref{prop:AIDcoefsPoly} is divisible by $k^2$ and by $(mk-k+1)$.
\end{proposition}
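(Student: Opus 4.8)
The plan is to track, as functions of $k$, the apparent poles of the left-hand side of \eqref{eq:AIDcoefsPoly} at $k=0$ and at the root $k_0:=-\tfrac{1}{m-1}$ of $mk-k+1$. Reading off \eqref{eq:AIDEcoef} and continuing the factorials meromorphically via the conventions \eqref{eq:defPoch}, set
\begin{equation}
\Psi(k):=k^2\,[\hbar^p][\Id]\check{\mathcal{A}}(k,\hbar)=\sum_{\substack{0\le\sigma\le 2p+2\\ s_1+\dots+s_m=\sigma}}Q^{p+1}_{s_1,\dots,s_m}(k,-\tfrac12)\,(k)_{s_1}\cdots(k)_{s_m}\,g_\sigma(k),\qquad g_\sigma(k):=\frac{(mk-\sigma)_{k-1-\sigma}}{(k-1-\sigma)!}.
\end{equation}
Comparing with Proposition~\ref{prop:AIDcoefsPoly} gives $\mathrm{S}^{\Id}_p(k)=\Psi(k)\,h(k)$ with $h(k)=(mk-k+1)\,\frac{k!\,(mk-k-1)!}{(mk-m)!}\prod_{\substack{m\le j\le 2p+1\\ m\nmid j}}(k-\tfrac{j}{m})$. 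A short computation with $\Gamma$ shows $h$ is regular and non-zero at $k=0$ and at $k=k_0$ (for $m\ge 2$): at $k_0$ the zero of $mk-k+1$ cancels the simple pole of $(mk-k-1)!=\Gamma((m-1)k)$, while at $0$ that same pole cancels the simple zero of $1/(mk-m)!=1/\Gamma(mk-m+1)$. Hence it suffices to prove that $\Psi(k_0)=0$ and that $\Psi(k)=O(k^2)$ as $k\to 0$.

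For divisibility by $mk-k+1$ I would exploit the coincidence $mk_0=k_0-1$, which forces $g_\sigma(k_0)=(k_0-1-\sigma)_{k_0-1-\sigma}/(k_0-1-\sigma)!=1$ independently of $\sigma$. Therefore, by the expansion \eqref{eq:RQexpan},
\begin{equation}
\Psi(k_0)=\sum_{\sigma,\vec{s}}Q^{p+1}_{\vec{s}}(k_0,-\tfrac12)\,(k_0)_{s_1}\cdots(k_0)_{s_m}=R_{p+1}(k_0,-\tfrac12,k_0,\dots,k_0),
\end{equation}
which vanishes because $R(\hbar;k,-\tfrac12,k,\dots,k)=1$ (a product of zero factors, as already used in the proof of Proposition~\ref{prop:EulerRAct}), so its $\hbar^{p+1}$-coefficient is zero for every $p\ge0$. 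Thus $\mathrm{S}^{\Id}_p(k_0)=0$, i.e. $(mk-k+1)\mid\mathrm{S}^{\Id}_p$.

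For divisibility by $k^2$ I would expand $\Psi$ to first order at $k=0$. A residue computation gives $g_\sigma(0)=-\sigma/m$, which is linear in $\sigma$ with no constant term; writing $g_\sigma(k)=-\tfrac{\sigma}{m}+k\,g_\sigma'(0)+O(k^2)$ and substituting yields three contributions. The leading one, $-\tfrac1m\sum_{\sigma,\vec{s}}\sigma\,Q^{p+1}_{\vec{s}}(k,-\tfrac12)(k)_{s_1}\cdots(k)_{s_m}$, equals $-\tfrac1m(\mathfrak{E}R_{p+1})\big|_{\vec{i}=(k,\dots,k)}$ via the elementary identity $\mathfrak{E}\prod_j(i_j)_{s_j}\big|_{\vec{i}=(k,\dots,k)}=\sigma\prod_j(k)_{s_j}$ together with \eqref{eq:RQexpan}, hence equals $-\tfrac1m\binom{m}{2}k^2\delta_{p,0}=O(k^2)$ by Proposition~\ref{prop:EulerRAct}. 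The first-order correction $k\sum_{\sigma,\vec{s}}g_\sigma'(0)\,Q^{p+1}_{\vec{s}}(k,-\tfrac12)(k)_{s_1}\cdots(k)_{s_m}$ vanishes at $k=0$, since there only $\vec{s}=(0,\dots,0)$ survives the factor $(k)_{s_1}\cdots(k)_{s_m}$ while $Q^{p+1}_{0,\dots,0}(k,-\tfrac12)$ is divisible by $k$ (Proposition~\ref{P:uEcoeff}), so this contribution is $O(k^2)$ too; the remaining tail is $O(k^2)$ trivially. Altogether $\Psi(k)=O(k^2)$, which gives $k^2\mid\mathrm{S}^{\Id}_p$.

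The main obstacle is this second statement: the order-two vanishing of $\Psi$ at $k=0$ is genuinely non-formal, because individual summands vanish only to first order. The structural fact that unlocks it is that $g_\sigma(0)$ is \emph{affine} in $\sigma$, which replaces the naive term-by-term estimate by the two global evaluations $R_{p+1}|_{\mathrm{diag}}=0$ and $\mathfrak{E}R_{p+1}|_{\mathrm{diag}}=\binom{m}{2}k^2\delta_{p,0}$ of Proposition~\ref{prop:EulerRAct}; the residual first-order piece is then killed precisely by the divisibility of $Q^{p+1}_{0,\dots,0}$ by $k$. Some care is still required to justify the meromorphic continuation in $k$ of the sum \eqref{eq:AIDEcoef} and to handle the degenerate small values of $m$ (notably $m=2$, where $k_0=-1$ is an integer, so the evaluations $g_\sigma(k_0)=1$ and $g_\sigma(0)=-\sigma/m$ must be read as limits using the conventions \eqref{eq:defPoch}).
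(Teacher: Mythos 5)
Your proof is correct and rests on the same two pillars as the paper's own argument: the collapse of the $\sigma$-dependent weights to a $\sigma$-independent constant at $mk-k+1=0$, which reduces the value there to $R_{p+1}(k,-\tfrac12,k,\dots,k)=[\hbar^{p+1}]\,1=0$, and the Euler-operator evaluation of Proposition~\ref{prop:EulerRAct} at $k=0$. The packaging differs in two respects. First, the paper strips off the common transcendental factor $\tfrac{(mk)_{k-1}}{(k-1)!}$ and works with the genuinely rational function $\rho_{p+1}(k)=\sum_\sigma\tfrac{(k-1)_\sigma}{(mk)_\sigma}V_{p+1}^{(\sigma)}$, so evaluation at $k=0$ and $k=-\tfrac{1}{m-1}$ requires no analytic continuation beyond the trivial one for rational functions; your $\Psi(k)$ carries that factor along, which is why you must invoke $\Gamma$-continuation and check separately that $h$ is regular and non-zero at the two points (including the $m=2$ degeneration). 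The cleanest way to discharge the continuation issue you flag is to note that $\Psi(k)h(k)$ is in fact rational --- the Gamma factors cancel identically, yielding exactly the paper's identity $\mathrm{S}^{\Id}_p(k)=\rho_{p+1}(k)\cdot k\cdot\tfrac{m(mk-1)\cdots(mk-m+1)}{m-1}\prod(k-j/m)$ --- so agreement on all large positive integers suffices. Second, at $k=0$ the paper introduces the auxiliary two-variable function $\widetilde{\rho}_p(k,\tilde k)$ and argues by continuity at $(0,0)$ to reduce to the operator $\tfrac{mk-\mathfrak{E}}{mk}$, whereas you Taylor-expand the weights directly, exploiting that $g_\sigma(0)=-\sigma/m$ is linear in $\sigma$ (hence assembles into $-\tfrac1m\,\mathfrak{E}R_{p+1}$ on the diagonal) and killing the first-order cross term with the divisibility $k\mid Q^{p+1}_{0,\dots,0}$ from Proposition~\ref{P:uEcoeff}. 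Your expansion is more direct and avoids the two-variable detour, at the cost of needing that extra divisibility input (which the paper's route does not use at this step, although it is proved anyway) and the continuation bookkeeping above; note also that for $m=2$ the value $g_\sigma(k_0)$ need not equal $1$, but it remains independent of $\sigma$, which is all your argument actually requires.
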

\begin{proof}
	Recall \eqref{eq:RQexpan}, which is the definition of the $Q$-polynomials:
	\begin{equation}
	R_p(k,l,i_1,\dots,i_m)= \sum_{\sigma=0}^{2p}\;\sum_{s_1+\dots+s_m=\sigma}Q^p_{s_1,\dots,s_m}(k,l)\; \cdot\;(i_1)_{s_1}\cdots(i_m)_{s_m}.
	\end{equation}	
	Denote	
	\begin{equation}
	V_p^{(\sigma)}(k,l,i_1,\dots,i_m):= \sum_{s_1+\dots+s_m=\sigma}Q^p_{s_1,\dots,s_m}(k,l)\; \cdot\;(i_1)_{s_1}\cdots(i_m)_{s_m},
	\end{equation}
	so that
	\begin{equation}
	R_p(k,l,i_1,\dots,i_m) =  \sum_{\sigma=0}^{2p} V_p^{(\sigma)}(k,l,i_1,\dots,i_m).
	\end{equation}
%
	Introduce the following rational expression in $k$:
	\begin{equation} \label{eq:rhodef}	
	\rho_p(k) :=  \sum_{\sigma=0}^{2p} \dfrac{(k-1)_\sigma}{(mk)_\sigma} \;V_p^{(\sigma)}(k,-1/2,k,\dots,k).
	\end{equation}
%
	Note that 
	\begin{equation}\label{eq:Arhorel}
	[\hbar^{p-1}][\Id]\check{\mathcal{A}}(k,\hbar)=\dfrac{1}{k^2}\;\rho_p(k)\; \dfrac{(mk)_{k-1}}{(k-1)!},
	\end{equation}
	as follows from \eqref{eq:AIDcoefsQ} and \eqref{eq:RQexpan}. We also have
	\begin{equation}\label{eq:coefsrel}
	\dfrac{(mk)_{k-1}}{(k-1)!} = \dfrac{(mk-m)!}{k!\,(mk-k-1)!} \cdot \dfrac{mk (mk-1)(mk-2)\cdots(mk-m+1)}{(m-1)(mk-k+1)}.
	\end{equation}
	Equations \eqref{eq:AIDcoefsPoly}, \eqref{eq:Arhorel}, and \eqref{eq:coefsrel} together imply that
	\begin{equation}
	\mathrm{S}^{\Id}_{p-1}(k)= \rho_p(k)\; \cdot\; k\;\cdot\;  \dfrac{m (mk-1)(mk-2)\cdots(mk-m+1)}{m-1}\cdot \prod_{\substack{m\leq j \leq 2p-1 \\ m\nmid j}} \left(k-\dfrac{j}{m}\right).
	\end{equation}
	From the last equation it is evident that in order to prove the present proposition it is sufficient to prove that for $p>0$ expression $\rho_p(k)$
	has zeroes at $k=1/(1-m)$, i.e.~at $mk-k+1=0$, and at $k=0$. Let us prove it.
	
	
	First, let us prove that
	\begin{equation} \label{eq:trhoop}
	\rho_p(k) = \left.\left(\dfrac{(mk-\mathfrak{E})_{mk-k+1}}{(mk)_{mk-k+1}}\; R_p(k,-1/2,i_1,\dots,i_m)\right)\right|_{i_1=\dots=i_m=k},
	\end{equation}
	where $\mathfrak{E}$ is the discrete Euler operator in the $i$-variables defined in \eqref{eq:EulerI}.
	In order to prove this equation note that 
	\begin{equation}
	\mathfrak{E} \; (i_1)_{s_1}\cdots(i_m)_{s_m} = (s_1+\dots+s_m) \,\cdot\,(i_1)_{s_1}\cdots(i_m)_{s_m}
	\end{equation}
	and that 
	\begin{equation}
	\dfrac{(k-1)_\sigma}{(mk)_\sigma} = \dfrac{\Gamma(k)\;\Gamma(mk+1-\sigma)}{\Gamma(k-\sigma)\;\Gamma(mk+1)} = \dfrac{(mk-\sigma)_{mk-k+1}}{(mk)_{mk-k+1}}.
	\end{equation}
	Thus (for $s_1+\dots+s_m=\sigma$) we have
	\begin{equation}
	\dfrac{(k-1)_\sigma}{(mk)_\sigma}\; (k)_{s_1}\cdots(k)_{s_m} = \left.\left(\dfrac{(mk-\mathfrak{E})_{mk-k+1}}{(mk)_{mk-k+1}}\; (i_1)_{s_1}\cdots(i_m)_{s_m}\right)\right|_{i_1=\dots=i_m=k},
	\end{equation}
	which proves \eqref{eq:trhoop} after one takes into account the definition of $\rho_p(k)$, i.e. formula \eqref{eq:rhodef}.

	
	Let us prove that $\rho_p(k)$ for $p>0$ has a zero at $mk-k+1=0$, i.e. at $k=1/(1-m)$. Let $mk-k+1=0$. In that case we have
	\begin{align} 
	&\left.\rho_p(k)\right|_{mk-k+1=0} = \left.\left(\dfrac{(mk-\mathfrak{E})_{mk-k+1}}{(mk)_{mk-k+1}}\; R_p(k,-1/2,i_1,\dots,i_m)\right)\right|_{\substack{i_1=\dots=i_m=k\\mk-k+1=0}} \\ \nonumber
	&=\left(R_p(k,-1/2,i_1,\dots,i_m)\right)\bigg|_{\substack{i_1=\dots=i_m=k\\mk-k+1=0}} = \left(R_p(k,-1/2,k,\dots,k)\right)\big|_{mk-k+1=0} \\ \nonumber
	&=\delta_{p,0} \big|_{mk-k+1=0} = \delta_{p,0}.
	\end{align}
	
	What is left is to prove that $\rho_p(k)$ has a zero at $k=0$.
%
	Let us introduce the following function that depends on $k$ and on a new formal variable $\tilde{k}$:
	\begin{align} 
	\widetilde{\rho}_p(k,\tilde{k}) &:= \sum_{\sigma=0}^{2p} \dfrac{(m(k-\tilde{k})+\tilde{k}-1)_\sigma}{(mk)_\sigma} \;V_p^{(\sigma)}(k,-1/2,k,\dots,k)\\ \nonumber  &=  \left.\left(\dfrac{(mk-\mathfrak{E})_{m\tilde{k}-\tilde{k}+1}}{(mk)_{m\tilde{k}-\tilde{k}+1}}\; R_p(k,-1/2,i_1,\dots,i_m)\right)\right|_{i_1=\dots=i_m=k}.
	\end{align}
	Note that 
	\begin{equation}\label{eq:rhotform}
	\widetilde{\rho}_p(k,\tilde{k}) = \dfrac{\mathfrak{P}(k,\tilde{k})}{\mathfrak{Q}(k)},
	\end{equation}
	where $\mathfrak{P}(k,\tilde{k})$ is a polynomial in $k$ and $\tilde{k}$ and $\mathfrak{Q}(k)$ is a polynomial in $k$ not divisible by $k$. The fact that $\mathfrak{P}$ and $\mathfrak{Q}$ are polynomials is evident from the definition, while the fact that $\mathfrak{Q}$ does not have a zero at $k=0$ requires a bit of explanation. Note that the poles of the rational function $\widetilde{\rho}$ come only from $(mk)_\sigma$ in the denominators of the terms in the sum in the definition of $\widetilde{\rho}$. For $\sigma>0$ expression $(mk)_\sigma$ has a simple zero at $k=0$. But this single factor of $k$ gets canceled with $k$ coming from one of the $(k)_{s_j}$ factors (since for $\sigma>0$ there is at least one $s_j>0$). Thus terms with $\sigma>0$ in $\rho$ do not have poles at $k=0$. The term with $\sigma=0$ does not have any poles at all, which proves the statement.
	This means that the $\widetilde{\rho}_p(k,\tilde{k})$ is continuous at $(k,\tilde{k})=(0,0)$ as a function of two variables.
	
	Note that 
	\begin{equation}\label{eq:rhorhot}
	\rho_p(k)=\widetilde{\rho}_p(k,k).
	\end{equation}
	Consider $\widetilde{\rho}_p(k,0)$:
	\begin{align} \label{eq:rhotexpr}
	\widetilde{\rho}_p(k,0) &= \left.\left(\dfrac{(mk-\mathfrak{E})_1}{(mk)_1}\; R_p(k,-1/2,i_1,\dots,i_m)\right)\right|_{i_1=\dots=i_m=k} \\ \nonumber
	&=\left.\left(\dfrac{mk-\mathfrak{E}}{mk}\; R_p(k,-1/2,i_1,\dots,i_m)\right)\right|_{i_1=\dots=i_m=k}\\ \nonumber
	&=[\hbar^p]\left.\left(\dfrac{mk-\mathfrak{E}}{mk}\; R(k,-1/2,i_1,\dots,i_m)\right)\right|_{i_1=\dots=i_m=k}\\ \nonumber
	&= [\hbar^p]\,\dfrac{mk-\binom{m}{2}\, k^2 \hbar}{mk}.
	\end{align}
	In this computation in the last equality we used the result of Proposition \ref{prop:EulerRAct} and the fact that $R(k,-1/2,k,\dots,k)=1$.
	Since our rational expression $\widetilde{\rho}_p(k,\tilde{k})$ is continuous at $(k,\tilde{k})=(0,0)$ and since $\rho_p(k)=\widetilde{\rho}_p(k,k)$, we have:
	\begin{equation}
	\rho_p(0) = \widetilde{\rho}_p(k,\tilde{k})|_{k=\tilde{k}=0} = \widetilde{\rho}_p(k,0)|_{k=0}=\left([\hbar^p]\,\left(1-\dfrac{m-1}{2}\,k\hbar\right)\right)\bigg|_{k=0}=[\hbar^p] 1= \delta_{p,0},
	\end{equation}
	which implies that $\rho_p(0)=0$ for $p>0$. This proves the proposition.
\end{proof}

\subsection{The $\check{\mathcal{A}}^\dagger$-operators}

In the present subsection we introduce and study the $\check{\mathcal{A}}^\dagger$-operators and relate them to the residues at negative integer points of the rescaled $\check{\mathcal{A}}$-operators themselves.

\subsubsection{Definition of $\check{\mathcal{A}}^\dagger$ }
Let us define the $\check{\mathcal{A}}^\dagger$-operator as
\begin{equation}\label{eq:ainvopdef}
\check{\mathcal{A}}^{\dagger}(k,\hbar) \coloneqq \hbar^k e^{\alpha_1}D(\hbar)^m\frac{\alpha_{k}}{k}D(\hbar)^{-m}e^{-\alpha_1}.
\end{equation}
We want to express it in terms of functions $P_k(l).$

\begin{notation}
	Denote
	\begin{equation}
	\tilde{P}_k(l) := \prod\limits_{i=0}^{k-1}(1-\hbar(-l+i+1/2))
	\end{equation}
\end{notation}

\begin{lemma}
For a positive integer $k$
\begin{equation}
D^m(\hbar)\alpha_{k}D^{-m}(\hbar) =\sum\limits_{l\in\Z+\frac{1}{2}} \tilde{P}^{-m}_k(l) E_{l-k,l}.
\end{equation}
\end{lemma}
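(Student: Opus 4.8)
The plan is to follow verbatim the computation in the proof of Lemma~\ref{L:conj1}, now tracking the raising operator $\alpha_k$ in place of the lowering operator $\alpha_{-k}$. As before, write $D=\sum_{l\in\Z+\frac12} d_l E_{l,l}$ with $d_{l+1}/d_l=1+\hbar(l+1/2)$, so that $D^{\pm m}(\hbar)$ acts on the basis vector $\underline{l}$ by the scalar $d_l^{\pm m}$. Since $\alpha_k=\sum_{l\in\Z+\frac12}E_{l-k,l}$ sends $\underline{l}$ to $\underline{l-k}$, I would trace a basis vector through the three factors:
\[
\underline{l}\ \stackrel{D^{-m}}{\longmapsto}\ d_l^{-m}\underline{l}\ \stackrel{\alpha_k}{\longmapsto}\ d_l^{-m}\underline{l-k}\ \stackrel{D^{m}}{\longmapsto}\ \left(\frac{d_{l-k}}{d_l}\right)^{m}\underline{l-k}.
\]
This immediately gives $D^m(\hbar)\alpha_k D^{-m}(\hbar)=\sum_{l\in\Z+\frac12}(d_{l-k}/d_l)^m E_{l-k,l}$, so the entire content of the lemma reduces to identifying the eigenvalue $(d_{l-k}/d_l)^m$ with $\tilde{P}_k^{-m}(l)$.

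Next I would compute the scalar $d_{l-k}/d_l$ by telescoping the defining ratios. Writing
\[
\frac{d_l}{d_{l-k}}=\prod_{j=1}^{k}\frac{d_{l-k+j}}{d_{l-k+j-1}}=\prod_{j=1}^k\big(1+\hbar(l-k+j-1/2)\big),
\]
and reindexing by $i=k-j$ turns this product into $\prod_{i=0}^{k-1}\big(1+\hbar(l-i-1/2)\big)$, which is exactly $\tilde{P}_k(l)$ once one rewrites $1+\hbar(l-i-1/2)=1-\hbar(-l+i+1/2)$. Hence $d_{l-k}/d_l=\tilde{P}_k(l)^{-1}$ and $(d_{l-k}/d_l)^m=\tilde{P}_k^{-m}(l)$, which combined with the display above yields the claimed formula.

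There is no genuine obstacle here: the argument is strictly parallel to Lemma~\ref{L:conj1}, and the only point requiring care is the direction of the index shift. Because $\alpha_k$ \emph{lowers} the half-integer label by $k$, the nontrivial diagonal of the conjugated operator sits on $E_{l-k,l}$ and the telescoping product runs over the labels below $l$ rather than above it; this is precisely what produces the reflected sign $1-\hbar(-l+i+1/2)$ defining $\tilde{P}_k$, as opposed to the $1+\hbar(l+i+1/2)$ appearing in $P_k$. Keeping the reindexing consistent with the definition of $\tilde{P}_k(l)$ is the one place where an off-by-one or a sign slip could creep in, so I would double-check the two endpoints $i=0$ and $i=k-1$ of the product against the definition.
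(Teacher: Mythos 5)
Your proof is correct and is exactly the argument the paper intends: the paper's own proof of this lemma consists of the single sentence that it is analogous to Lemma~\ref{L:conj1}, and your write-up carries out precisely that analogous eigenvalue-tracking computation, with the telescoping product and reindexing $i=k-j$ correctly producing $d_l/d_{l-k}=\prod_{i=0}^{k-1}\bigl(1-\hbar(-l+i+1/2)\bigr)=\tilde{P}_k(l)$.
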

\begin{proof}
The proof is analogous to the proof of Lemma \ref{L:conj1}.
\end{proof}

Therefore using Lemma \ref{L:conj2} we conclude that
\begin{equation}
[E_{l-k-t,l}]\check{\mathcal{A}}^{-1}(k,\hbar) =  \frac{\hbar^k}{k}\frac{(\Delta^t \tilde{P}^{-m}_k)(l)}{t!}.
\end{equation}

\subsubsection{Technical statements about the $\tilde P$- and $\tilde R$-polynomials}
Similarly to Lemma \ref{L:delta1} and Proposition \ref{prop:deltapexr} we get the following identity for the polynomials $\tilde{P}_k(l)$:
If $t>k,$ then $\Delta^t \tilde{P}_k(l) =0$ and if $0\leq t\leq k$, then
\begin{equation}
\Delta^t \tilde{P}^{-1}_k(l) = (-1)^t k^{(t)} \hbar^t \tilde{P}^{-1}_{k+t}(l),
\end{equation}
where $k^{(t)} = k(k+1)\ldots(k+t-1)$ is the rising factorial. The next identity holds for any function $F(l)$:
\begin{equation}
\frac{(\Delta^{t}F^m)(l)}{t!}  = \sum\limits_{i_1+\ldots+i_m=t} \prod\limits_{j=1}^{m} \frac{\Delta^{i_j}}{i_j!}F(l-i_{j+1}-\ldots-i_m)
\end{equation}
Using the last two formulae we get
\begin{equation}
\frac{(\Delta^{t} \tilde{P}^{-m}_k)(l)}{t!}  = 
 \sum\limits_{i_1+\ldots+i_m=t} (-1)^t\hbar^t \prod\limits_{j=1}^{m} \frac{k^{(i_j)}}{i_j!} \tilde{P}^{-1}_{k+i_j}(l-i_{j+1}-\ldots-i_m),
\end{equation}
where the summations go over all ordered tuples of nonnegative integers $(i_1,\ldots,i_m).$

We will need the following two technical propositions:

\begin{proposition}
	For
	\begin{equation} 
	\tilde{R}_p(k,l,i_1,\dots,i_m):=[\hbar^p]\prod\limits_{j=1}^{m} \tilde{P}^{-1}_{k+i_j}(l-i_{j+1}-\ldots-i_m)
	\end{equation}
	we have:
	\begin{equation} \label{eq:RtRmanyrel}
	\tilde{R}_p(k,l,i_1,\dots,i_m)=R_p(-k,l,i_1,\dots,i_m)
	\end{equation}
	Note that $R_p(-k,l,i_1,\dots,i_m)$ is understood to be the result of substitution of $-k$ in place of $k$ in the polynomial $R_p(k,l,i_1,\dots,i_m)$ defined in \eqref{eq:Puexp}, which makes sense since it is a polynomial.
\end{proposition}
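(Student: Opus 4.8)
The plan is to reduce the statement to the single-factor identity $\tilde{P}_k^{-1}(l)=P_{-k}(l)$, where $P_{-k}$ denotes the natural continuation of the finite product $P_k$ to negative integer index. Indeed, comparing the definition \eqref{eq:Rdef} of $R(\hbar;k,l,i_1,\dots,i_m)=\prod_{j=1}^m P_{k-i_j}(l-i_{j+1}-\dots-i_m)$ with the product defining $\tilde{R}_p$, one sees that the latter is obtained from the former by the formal substitution $k\mapsto -k$ at the level of the individual factors: if $\tilde{P}_{k+i_j}^{-1}(l')=P_{-(k+i_j)}(l')=P_{(-k)-i_j}(l')$ for each $j$, then
\begin{equation}
\prod_{j=1}^m \tilde{P}_{k+i_j}^{-1}(l-i_{j+1}-\dots-i_m)=\prod_{j=1}^m P_{(-k)-i_j}(l-i_{j+1}-\dots-i_m)=R(\hbar;-k,l,i_1,\dots,i_m),
\end{equation}
so that taking $[\hbar^p]$ of both sides yields exactly \eqref{eq:RtRmanyrel}, \emph{provided} one knows that coefficient extraction commutes with the substitution $k\mapsto -k$.

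First I would establish the single-factor identity. Writing out $\tilde{P}_k(l)=\prod_{i=0}^{k-1}\big(1+\hbar(l-i-\tfrac12)\big)$ and $P_k(l)=\prod_{i=0}^{k-1}\big(1+\hbar(l+i+\tfrac12)\big)$, one checks directly that $\tilde{P}_k(l)=P_k(l-k)$, since both are the product of the factors $1+\hbar(l-\tfrac12),\dots,1+\hbar(l-k+\tfrac12)$. Extending $P$ to all integers by the Gamma-function (equivalently, Pochhammer) continuation, this relation gives $P_{-k}(l)=P_k(l-k)^{-1}=\tilde{P}_k^{-1}(l)$, which is the desired identity, and in particular $P_{(-k)-i_j}(l')=\tilde{P}_{k+i_j}^{-1}(l')$, justifying the product computation above.

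The main obstacle is the last step: a priori, taking $[\hbar^p]$ and substituting $k\mapsto -k$ need not commute, because for positive index $P_{k-i_j}$ is a \emph{polynomial} in $\hbar$, whereas after the substitution $\tilde{P}_{k+i_j}^{-1}$ is only a power series in $\hbar$. To handle this I would argue as follows. By \eqref{Eq:degreep} the coefficient $[\hbar^{\beta}]P_n(l)=e_{\beta}(l+\tfrac12,\dots,l+n-\tfrac12)$ is, via the Faulhaber-type result of Proposition~\ref{prop:extFaulhaber}, a polynomial $\Pi_{\beta}(n,l)$ in $n$ and $l$; the same generating-function continuation used above shows that $[\hbar^{\beta}]\tilde{P}_n^{-1}(l)=(-1)^{\beta}h_{\beta}(l-\tfrac12,\dots,l-n+\tfrac12)$ equals precisely $\Pi_{\beta}(-n,l)$, i.e.\ the coefficient of the continued factor is the continuation of the coefficient of the original factor. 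Expanding both products by the Cauchy rule, $R_p=\sum_{\beta_1+\dots+\beta_m=p}\prod_j \Pi_{\beta_j}(k-i_j,c_j)$ with $c_j=l-i_{j+1}-\dots-i_m$ is a polynomial in $k$ (this is exactly Proposition~\ref{prop:Rppoly}), while the analogous expansion of $\tilde{R}_p$ replaces each factor $\Pi_{\beta_j}(k-i_j,c_j)$ by $\Pi_{\beta_j}(-(k+i_j),c_j)$; since $\Pi_{\beta_j}(-(k+i_j),c_j)=\Pi_{\beta_j}((-k)-i_j,c_j)$ as polynomials in $k$, summing over the same index set gives $\tilde{R}_p(k,l,i_1,\dots,i_m)=R_p(-k,l,i_1,\dots,i_m)$, as claimed. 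The only genuinely nontrivial input is thus the polynomiality of the single-factor coefficients (Proposition~\ref{prop:extFaulhaber}) together with the elementary/complete symmetric-function duality under negation of the number of variables, both of which are already available.
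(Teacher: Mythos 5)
Your proof is correct and follows essentially the same route as the paper's: you reduce to the single-factor case via the Cauchy product rule over $\beta_1+\dots+\beta_m=p$ (exactly as in \eqref{eq:Rtmanyexpr}--\eqref{eq:Rmanyexpr}) and settle the single-factor identity by the elementary/complete symmetric-function reciprocity under negation of the number of variables, which is precisely Proposition~\ref{prop:TtTrel} combined with the polynomiality from Proposition~\ref{prop:extFaulhaber}. The ``continuation of $P$ to negative index'' framing and the discussion of commuting $[\hbar^p]$ with $k\mapsto -k$ are a harmless repackaging of the same argument.
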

\begin{proof}
		First let us prove the proposition for the case $m=1$, i.e. that 
		\begin{equation} \label{eq:RtRrel}
		\tilde{R}_p(k,l,i)=R_p(-k,l,i).
		\end{equation}
		We have
		\begin{align} \label{eq:Rexpr}
		&\tilde{R}_p(k,l,i) = [\hbar^p]\tilde{P}^{-1}_{k+i}(l) = [\hbar^p]\prod\limits_{\gamma=0}^{k+i-1}(1-\hbar(-l+\gamma+1/2))^{-1} \\ \nonumber
		&=[\hbar^p]\prod\limits_{\gamma=1}^{k+i}(1+\hbar(-l+\gamma-1/2)+\hbar^2(-l+\gamma-1/2)^2+\ldots)\\ \nonumber
		&=\sum\limits_{1\leq\gamma_1\leq\ldots\leq\gamma_p\leq k+i}(-l+\gamma_1-1/2)\cdots(-l+\gamma_p-1/2)
		\end{align}
		and		
		\begin{align}  \label{eq:Rtexpr}
		&R_p(k,l,i) = [\hbar^p]P_{k-i}(l) = [\hbar^p]\prod\limits_{\gamma=0}^{k-i-1}(1+\hbar(l+\gamma+1/2)) \\ \nonumber
		&=\sum\limits_{0<\gamma_1 < \ldots <\gamma_p < k-i}(l+\gamma_1+1/2)\cdots(l+\gamma_p+1/2).
		\end{align}
		From \eqref{eq:Rexpr} and \eqref{eq:Rtexpr} it is easy to see that \eqref{eq:RtRrel} is directly implied by Proposition \ref{prop:TtTrel}.
		
		For general $m$ we have
		\begin{align} \label{eq:Rtmanyexpr}
		&\tilde{R}_p(k,l,i_1,\dots,i_m)=[\hbar^p]\prod\limits_{j=1}^{m}\tilde{P}^{-1}_{k+i_j}(l-i_{j+1}-\ldots-i_m)\\ \nonumber
		&=\sum_{\beta_1+\ldots+\beta_m=p}\;\prod\limits_{j=1}^{m} [\hbar^{\beta_j}]\tilde{P}^{-1}_{k+i_j}(l-i_{j+1}-\ldots-i_m)\\ \nonumber
		&=\sum_{\beta_1+\ldots+\beta_m=p}\;\prod\limits_{j=1}^{m} \wt{R}_{\beta_j}(k,l-i_{j+1}-\ldots-i_m,i_j)\\ \nonumber
		&\mathop{=}^{\eqref{eq:RtRrel}}\sum_{\beta_1+\ldots+\beta_m=p}\;\prod\limits_{j=1}^{m} R_{\beta_j}(-k,l-i_{j+1}-\ldots-i_m,i_j).
		\end{align}
		On the other hand,
		\begin{align} \label{eq:Rmanyexpr}
		R_p(k,l,i_1,\dots,i_m) & =[\hbar^p]\prod\limits_{j=1}^{m}P_{k-i_j}(l-i_{j+1}-\ldots-i_m)\\ \nonumber
		&=\sum_{\beta_1+\ldots+\beta_m=p}\;\prod\limits_{j=1}^{m} [\hbar^{\beta_j}]P_{k-i_j}(l-i_{j+1}-\ldots-i_m)\\ \nonumber
		&=\sum_{\beta_1+\ldots+\beta_m=p}\;\prod\limits_{j=1}^{m} R_{\beta_j}(k,l-i_{j+1}-\ldots-i_m,i_j).
		\end{align}
		We see that the last line of \eqref{eq:Rmanyexpr} turns precisely into the last line of \eqref{eq:Rtmanyexpr} once one plugs $-k$ in place of $k$, which proves the proposition.
\end{proof}




\subsubsection{The $\mathcal{A}$-operators and their residues}

\begin{definition} \label{def:Aoperdef}
	Denote
\begin{equation}\label{eq:Aoperdef}
	\mathcal{A}(k,\hbar):= \left(\dfrac{(mk-m)!}{k!\,(mk-k-1)!} \right)^{-1}\check{\mathcal{A}}(k,\hbar),
\end{equation}
or, in other words, 
\begin{equation}
\check{\mathcal{A}}(k,\hbar)= \dfrac{(mk-m)!}{k!\,(mk-k-1)!}\;\mathcal{A}(k,\hbar).
\end{equation}
\end{definition}

\begin{proposition}\label{prop:AinvcoefsPoly}
	For $r\in\mathbb{Z}_{>0}$ we have
	\begin{equation}\label{eq:AinvcoefsPoly}
\mathop{\Res}_{k=-r}\mathcal{A}(k,u) = c(r) \check{\mathcal{A}}^{\dagger}(r,u),
	\end{equation}
where $c(r)$ is a coefficient which depends only on $r$.
\end{proposition}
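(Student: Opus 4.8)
The plan is to reduce the proposition to the single reflection identity
\[
\check{\mathcal A}(-r,\hbar)=-\,\check{\mathcal A}^{\dagger}(r,\hbar)\qquad\text{for every }r\in\mathbb Z_{>0},
\]
understood entry by entry in the basis $\{E_{l-q,l}\}\cup\{\Id\}$ and order by order in $\hbar$, where the left-hand side means the analytic continuation in $k$ of the matrix coefficients of $\check{\mathcal A}(k,\hbar)$ furnished by Propositions \ref{prop:AcoefsPoly}, \ref{prop:AIDcoefsPoly} and \ref{prop:IdDivis}. This identity is the rigorous shadow of the purely formal observation that replacing $k$ by $-k$ in $\hbar^{-k}e^{\alpha_1}D(\hbar)^m\frac{\alpha_{-k}}{k}D(\hbar)^{-m}e^{-\alpha_1}$ turns $\alpha_{-k}$ into $\alpha_{k}$ and flips the sign of $1/k$, thereby producing $-\check{\mathcal A}^{\dagger}$. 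Once the reflection identity is established, the residue statement follows by dividing through by the combinatorial normalization of Definition \ref{def:Aoperdef}.

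For the reflection identity I would start from equation \eqref{eq:AEcoef}, which for a positive integer $k$ and $q=r+t$ reads
\[
[\hbar^{q+p}][E_{l-q,l}]\check{\mathcal A}(k,\hbar)=\frac1k\sum_{i_1+\dots+i_m=q+k}R_p(k,l,i_1,\dots,i_m)\prod_{j=1}^m\frac{(k)_{i_j}}{i_j!},
\]
and observe that the Chu--Vandermonde summation used to pass to the closed rational form of Proposition \ref{prop:AcoefsPoly} remains valid, with the conventions of \eqref{eq:defPoch}, when its upper parameter $q+k-\sigma$ is specialized to a non-positive integer. Substituting $k=-r$ then turns the constraint into the fixed finite range $i_1+\dots+i_m=t$, and the rational function of Proposition \ref{prop:AcoefsPoly} evaluated at $k=-r$ equals the honest sum $\frac{1}{-r}\sum_{i_1+\dots+i_m=t}R_p(-r,l,i_1,\dots,i_m)\prod_{j=1}^m\frac{(-r)_{i_j}}{i_j!}$. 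Now three matching ingredients combine: the duality $R_p(-r,l,i_1,\dots,i_m)=\tilde R_p(r,l,i_1,\dots,i_m)$ from \eqref{eq:RtRmanyrel}, the elementary relation $(-r)_{i_j}=(-1)^{i_j}r^{(i_j)}$, and the expansion of $[E_{l-r-t,l}]\check{\mathcal A}^{\dagger}(r,\hbar)$ in terms of the same $\tilde R_p$-polynomials derived above. The factor $(-1)^{t}=\prod_j(-1)^{i_j}$ cancels the sign already present in the $\check{\mathcal A}^{\dagger}$-expansion, leaving the single minus from $1/(-r)$; this gives $\check{\mathcal A}(-r,\hbar)=-\check{\mathcal A}^{\dagger}(r,\hbar)$ on each $E_{l-q,l}$-entry with $q\ge r$. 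The entries with $0<q<r$ vanish on both sides because the range $\sum i_j=q-r$ is empty, and the $\Id$-entries vanish on both sides as well: on the $\check{\mathcal A}^{\dagger}$-side there is no identity contribution, and on the $\check{\mathcal A}$-side the normalizing prefactor vanishes at $k=-r$ while the reduced $\Id$-coefficient is regular there by Proposition \ref{prop:IdDivis}.

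To extract the residue, I would write $\check{\mathcal A}(k,\hbar)=\frac{(mk-m)!}{k!\,(mk-k-1)!}\,\mathcal A(k,\hbar)$ and count, via the Gamma-function representation $\frac{(mk-m)!}{k!\,(mk-k-1)!}=\frac{\Gamma(mk-m+1)}{\Gamma(k+1)\,\Gamma((m-1)k)}$, that this prefactor has a simple zero at $k=-r$ (a simple numerator pole against a double denominator pole), while $\check{\mathcal A}(k,\hbar)$ is regular there by the reflection identity. Consequently $\mathcal A$ has a simple pole at $k=-r$ and
\[
\Res_{k=-r}\mathcal A(k,\hbar)=\check{\mathcal A}(-r,\hbar)\cdot\lim_{k\to-r}\frac{(k+r)\,k!\,(mk-k-1)!}{(mk-m)!},
\]
where the limit is a nonzero constant depending only on $r$ and $m$. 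Combining this with $\check{\mathcal A}(-r,\hbar)=-\check{\mathcal A}^{\dagger}(r,\hbar)$ yields the claim, with $c(r)$ equal to minus that limit.

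The one genuinely delicate point, and the step I expect to be the main obstacle, is the justification that the rational function of Proposition \ref{prop:AcoefsPoly} may be evaluated at the negative integer $k=-r$ simply by restricting the composition constraint $\sum i_j=q+k$ to $\sum i_j=t$; that is, that the Chu--Vandermonde identity underlying \eqref{eq:AEcoef} is compatible with specializing its summation parameter to a non-positive value under the Pochhammer conventions of \eqref{eq:defPoch}. Everything downstream of this compatibility---the $R\leftrightarrow\tilde R$ duality, the sign bookkeeping, and the simple-zero count for the normalization---is then routine.
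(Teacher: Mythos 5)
Your strategy --- package the computation as a reflection identity $\check{\mathcal{A}}(-r,\hbar)=-\check{\mathcal{A}}^{\dagger}(r,\hbar)$ for a continuation of the matrix entries, then divide by the normalization of Definition~\ref{def:Aoperdef}, which has a simple zero at $k=-r$ --- is essentially a repackaging of what the paper does, and you have correctly identified every ingredient the paper's proof uses: the duality $\tilde R_p(k,l,i_1,\dots,i_m)=R_p(-k,l,i_1,\dots,i_m)$ of \eqref{eq:RtRmanyrel}, the relation $(-r)_{i}=(-1)^{i}r^{(i)}$, the expansion \eqref{eq:Ainvcoefs} of the entries of $\check{\mathcal{A}}^{\dagger}$, and the vanishing of both sides when $r>q-\sigma$ and of the $\Id$-entries. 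The difference is that the paper never evaluates $\check{\mathcal{A}}$ at $k=-r$: it computes $\Res_{k=-r}$ of each term of the honest rational function $\mathfrak{A}_{p,l,q}(k)$ from Proposition~\ref{prop:AcoefsPoly} directly (rewriting $(mk-\sigma)_{q+k-\sigma}/(q+k-\sigma)!$ so that the prefactor cancels and the pole sits in $(k+1)\cdots(k+q-\sigma)$), and then matches the result term by term against \eqref{eq:Ainvcoefs}. That route is unambiguous, whereas yours funnels everything through the step you yourself flag as the main obstacle.

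That step is a genuine gap, and not a merely formal one. The entry $[\hbar^{q+p}][E_{l-q,l}]\check{\mathcal{A}}(k,\hbar)$ involves $k$ both in the summand and in the summation constraint $\sum i_j=q+k$, and the two natural ``values at $k=-r$'' disagree: (i) the limit of $F(k)\,\mathfrak{A}_{p,l,q}(k)$, where $F(k)=\Gamma(mk-m+1)/\bigl(\Gamma(k+1)\,\Gamma((m-1)k)\bigr)$ is the Gamma-continuation of the normalization, versus (ii) the naive restriction of the composition sum to $\sum i_j=q-r$ under the Pochhammer conventions of \eqref{eq:defPoch}. A direct check on the Vandermonde factor of Proposition~\ref{P:uEcoeff} shows that $\lim_{k\to-r}\Gamma(mk-\sigma+1)/\bigl(\Gamma((m-1)k-q+1)\,\Gamma(q+k-\sigma+1)\bigr)$ differs from $(-mr-\sigma)_{q-r-\sigma}/(q-r-\sigma)!$ by a nontrivial constant factor (already for $m=2$, $r=q=1$, $\sigma=0$ one gets $1/2$ versus $1$), so the reflection identity with constant $-1$ is false as stated. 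Because the proposition only asserts proportionality with an unspecified $c(r)$, your approach is salvageable provided the discrepancy factor is shown to be uniform over all $\sigma$, $(s_1,\dots,s_m)$, $q$, $p$, $l$ --- but establishing that uniformity is exactly the term-by-term residue computation that the paper carries out in \eqref{eq:AcAirel}, i.e., the content you have deferred. As written, the proposal is a correct plan with the central step unproved, and the heuristic it rests on cannot be taken at face value.
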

\begin{proof}
	Equation \eqref{eq:AEcoef} for $\mathcal{A}^{\dagger}$ takes the form
	\begin{align}\label{eq:Ainvcoefs}
	&[\hbar^{q+p}][E_{l-q,l}]\mathcal{A}^{\dagger}(k,\hbar)=[\hbar^{q+p}] \frac{\hbar^{k}}{k} \frac{(\Delta^{q-k}\,\tilde{P}^{-m}_k)(l)}{(q-k)!}\\ \nonumber
	&= 
	(-1)^{q-k}[\hbar^{q+p}]\sum\limits_{i_1+\ldots+i_m=q-k} \frac{\hbar^{q}}{k} \prod\limits_{j=1}^{m} \frac{k^{(i_j)}}{i_j!} \tilde{P}^{-1}_{k+i_j}(l-i_{j+1}-\ldots-i_m) \\ \nonumber
	&=
	(-1)^{q-k}\sum\limits_{i_1+\ldots+i_m=q-k}	\tilde{R}_p(k,l,i_1,\dots,i_m) \frac{1}{k} \prod\limits_{j=1}^{m} \frac{k^{(i_j)}}{i_j!} \\ \nonumber
	&\mathop{=}^{\eqref{eq:RtRmanyrel}}
	(-1)^{q-k}\sum\limits_{i_1+\ldots+i_m=q-k}	R_p(-k,l,i_1,\dots,i_m) \frac{1}{k} \prod\limits_{j=1}^{m} \frac{k^{(i_j)}}{i_j!} \\ \nonumber
	&=
	(-1)^{q-k}\sum\limits_{i_1+\ldots+i_m=q-k} \;	 \sum_{\sigma=0}^{2p}\;\sum_{s_1+\dots+s_m=\sigma}Q^p_{s_1,\dots,s_m}(-k,l) \frac{1}{k} \prod\limits_{j=1}^{m} \dfrac{k^{(i_j)}\cdot (i_j)_{s_j}}{i_j!}\\ \nonumber
	&= 
	(-1)^{q-k}\sum_{\sigma=0}^{2p}\;\sum_{s_1+\dots+s_m=\sigma}Q^p_{s_1,\dots,s_m}(-k,l)\cdot k^{(s_1)}\cdots k^{(s_m)} \frac{1}{k} \sum\limits_{i_1+\ldots+i_m=q-k}\;\prod\limits_{j=1}^{m} \dfrac{(k+s_j)^{(i_j-s_j)}}{(i_j-s_j)!}\\ \nonumber
	&= 
	(-1)^{q-k}\sum_{\sigma=0}^{2p}\;\sum_{s_1+\dots+s_m=\sigma}Q^p_{s_1,\dots,s_m}(-k,l)\cdot k^{(s_1)}\cdots k^{(s_m)} \cdot \dfrac{(mk+\sigma)^{(q-k-\sigma)}}{k(q-k-\sigma)!}.
	\end{align}
	We have 
	\begin{align} 
	&\mathop{\Res}_{k=-r}[\hbar^{q+p}][E_{l-q,l}]\mathcal{A}(k,\hbar)\\ \nonumber
	&=\mathop{\Res}_{k=-r} \left(\dfrac{(mk-m)!}{k!\,(mk-k-1)!} \right)^{-1}\sum_{\sigma=0}^{2p}\;\sum_{s_1+\dots+s_m=\sigma}Q^p_{s_1,\dots,s_m}(k,l)\cdot(k)_{s_1}\cdots(k)_{s_m} \cdot \dfrac{(mk-\sigma)_{q+k-\sigma}}{k(q+k-\sigma)!}.
	\end{align}
	
	Fix $r\leq q-\sigma$. For $q\geq \sigma$ and $\sigma \geq m$ we have:
	\begin{align}
	&\dfrac{(mk-\sigma)_{k+q-\sigma}}{(q+k-\sigma)!} 
	=\dfrac{(mk-m)!}{k!\,(mk-k-1)!}\cdot \dfrac{1}{(k+1)\cdots(k+q-\sigma)}\cdot\dfrac{(mk-k-1)_{q-1}}{(mk-\sigma+1)
		\cdots(mk-m)}.
	\end{align}
	Then
	\begin{align}
	&\mathop{\Res}_{k=-r}\left( \dfrac{Q^p_{s_1,\dots,s_m}(k,l)\cdot(k)_{s_1}\cdots(k)_{s_m}}{k(k+1)\cdots(k+q-\sigma)}\cdot\dfrac{(mk-k-1)_{q-1}}{(mk-\sigma+1)(mk-\sigma+2)\cdots(mk-m)}\right) \\ \nonumber
	&=\dfrac{(-1)^{m+r+q}}{r!} \dfrac{Q^p_{s_1,\dots,s_m}(-r,l)\cdot r^{(s_1)}\cdots r^{(s_m)}}{(q-\sigma-r)!}\cdot\dfrac{(mr-r+1)^{(q-1)}}{(mr+\sigma-1)(mr+\sigma-2)\cdots(mr+m)}\\ \nonumber
	&=\dfrac{(-1)^{m} (mr-r+1)^{(m+r-1)}}{r!} \cdot (-1)^{q-r}\;Q^p_{s_1,\dots,s_m}(-r,l)\cdot r^{(s_1)}\cdots r^{(s_m)}\dfrac{(mr+\sigma)^{(q-r-\sigma)}}{(q-r-\sigma)!}.
	\end{align}
	
	For all other possible variants of how $q$, $\sigma$ and $m$ are positioned with respect to each other the intermediate computations are slightly different (though analogous), but the final line is one and the same. Thus we have 
	\begin{align} \label{eq:AcAirel}
	&\mathop{\Res}_{k=-r}[\hbar^{q+p}][E_{l-q,l}]\mathcal{A}(k,\hbar)\\ \nonumber
	&=\dfrac{(-1)^{m} (mr-r+1)^{(m+r-1)}}{r!}   \\ \nonumber
	&\phantom{= {}}\times(-1)^{q-r}\sum_{\sigma=0}^{2p}\;\sum_{s_1+\dots+s_m=\sigma}Q^p_{s_1,\dots,s_m}(-r,l)\cdot r^{(s_1)}\cdots r^{(s_m)}\dfrac{(mr+\sigma)^{(q-r-\sigma)}}{(q-r-\sigma)!} \\ \nonumber
	&\mathop{=}^{\eqref{eq:Ainvcoefs}}\dfrac{(-1)^{m} (mr-r+1)^{(m+r-1)}}{r!}\;\cdot\;[\hbar^{q+p}][E_{l-q,l}]\mathcal{A}^{\dagger}(r,\hbar).
	\end{align}
	
	For $r>q-\sigma$ note that the residue vanishes as there is not any corresponding pole in the expression. At the same time, in the formula for $\mathcal{A}^{\dagger}$ if we look at the third line from below in \eqref{eq:Ainvcoefs} we can see that since in this case we have $i_1+\ldots+i_m<s_1+\ldots+s_m$, there exists at least one $j$ such that $i_j<s_j$ and thus the corresponding $(i_j)_{s_j}$ vanishes, together with the whole summand. Thus for $r>q-\sigma$ the expressions in the top line and in the bottom line of \eqref{eq:AcAirel} are both zero and therefore agree as well.
	
	This proves the proposition.
\end{proof}

\section{Quasi-polynomiality of the BMS numbers} \label{sec:bmsquasipol}
In the present section we prove our main result: the quasi-polynomiality of the BMS numbers. 
Note that in terms of $\mathcal{A}$-operators equation \eqref{eq:bAexpr} takes the following form:
\begin{equation}\label{eq:bacor}
b^\circ_{g,\mu} = \prod_{i=1}^n \dfrac{(m\mu_i-m)!}{\mu_i!\,(m\mu_i-\mu_i-1)!} \;\cdot\;[\hbar^{2g-2+n}] \left\langle\prod\limits_{i=1}^n\mathcal{A}(\mu_i,\hbar) \right\rangle^{\circ}
\end{equation}

\begin{theorem}\label{th:apoly}
	In the stable case, i.e. for $(g,n)\notin \{(0,1),\,(0,2)\}$ we have
	\begin{equation}\label{eq:bconncorrqpol}
	b^\circ_{g,\mu} = \prod_{i=1}^n \dfrac{(m\mu_i-m)!}{\mu_i!\,(m\mu_i-\mu_i-1)!} \;\cdot\;  \dfrac{\mathrm{Poly}_{g,n}(\mu_1,\dots,\mu_n)}{\displaystyle\prod_{i=1}^n\prod_{\substack{m\leq j_i \leq 4g-4+2n-1 \\ m\nmid j_i}} \left(\mu_i-\dfrac{j_i}{m}\right)},
	\end{equation}
	where $\mathrm{Poly}_{g,n}(\mu_1,\dots,\mu_n)$ is a polynomial in $\mu_1,\dots,\mu_n$.
\end{theorem}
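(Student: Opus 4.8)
The plan is to work from the operator expression \eqref{eq:bacor}. Writing
\begin{equation*}
W_{g,n}(\mu_1,\dots,\mu_n):=[\hbar^{2g-2+n}]\Big\langle\prod_{i=1}^n\mathcal{A}(\mu_i,\hbar)\Big\rangle^\circ ,
\end{equation*}
the content of the theorem is that $W_{g,n}$ equals $\mathrm{Poly}_{g,n}(\mu)\big/\prod_{i=1}^n\prod_{\substack{m\leq j_i\leq 4g-4+2n-1\\ m\nmid j_i}}(\mu_i-j_i/m)$. I would prove this by establishing that $W_{g,n}$ is a symmetric rational function of $\mu_1,\dots,\mu_n$ (symmetry being inherited from that of $b^\circ_{g,\mu}$) whose only poles are the simple ``good'' poles at $\mu_i=j_i/m$ with $m\leq j_i\leq 4g-4+2n-1$, $m\nmid j_i$. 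Once this is done, clearing these denominators produces a rational function with no finite poles, which is therefore the polynomial $\mathrm{Poly}_{g,n}$.

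First I would expand the connected correlator by the semi-infinite wedge contraction rules. For fixed $(g,n)$ this is a finite sum over connected contraction patterns, each being an iterated sum over intermediate half-integer levels and over the energies transported along the pattern, of products of the matrix elements $[\hbar^\bullet][E_{l-q,l}]\check{\mathcal{A}}(\mu_i)$ (and the $[\Id]$-coefficients when $n=1$). The level sums run over the finite ranges dictated by the filling of the vacuum and the energy sums run up to $O(\mu_i)$; by the Faulhaber-type identities of Appendix~\ref{sec:appfaul} these evaluate to rational functions, so $W_{g,n}$ is rational in each $\mu_i$. Since each $\mu_i$ enters through a single factor $\mathcal{A}(\mu_i)$, Propositions~\ref{prop:AcoefsPoly} and~\ref{prop:AIDcoefsPoly} show that the only possible poles in $\mu_i$ are the simple good poles at $j_i/m$ and simple poles at negative integers. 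Moreover the excess $\hbar$-orders $p_i$ of the operators satisfy $\sum_i p_i=2g-2+n$, and operator $i$ produces good poles only for $m\leq j_i\leq 2p_i-1\leq 4g-4+2n-1$, which confines them to the claimed range.

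The crucial step is the elimination of the poles at negative integers. Fix $r\in\mathbb{Z}_{>0}$. Because the correlator is linear in each operator and depends on $\mu_1$ only through the rational matrix elements of $\mathcal{A}(\mu_1)$, Proposition~\ref{prop:AinvcoefsPoly} yields
\begin{equation*}
\Res_{\mu_1=-r}W_{g,n}=c(r)\,[\hbar^{2g-2+n}]\Big\langle\check{\mathcal{A}}^{\dagger}(r,\hbar)\prod_{i=2}^n\mathcal{A}(\mu_i,\hbar)\Big\rangle^\circ .
\end{equation*}
Now I would use $\check{\mathcal{A}}^{\dagger}(r)=\hbar^{r}e^{\alpha_1}D(\hbar)^m\tfrac{\alpha_{r}}{r}D(\hbar)^{-m}e^{-\alpha_1}$ and $\check{\mathcal{A}}(\mu_i)=\hbar^{-\mu_i}e^{\alpha_1}D(\hbar)^m\tfrac{\alpha_{-\mu_i}}{\mu_i}D(\hbar)^{-m}e^{-\alpha_1}$. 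In any product the inner conjugating factors telescope and $D(\hbar)^{-m}e^{-\alpha_1}|0\rangle=|0\rangle$; commuting $\alpha_{r}$ to the right through the creation operators $\alpha_{-\mu_i}$ via $[\alpha_{r},\alpha_{-\mu_i}]=r\delta_{r,\mu_i}$ shows that $\check{\mathcal{A}}^{\dagger}(r)$ contracts with exactly one factor, producing the scalar $\delta_{r,\mu_i}/\mu_i$ in $\hbar$-degree $0$ and removing both operators. Hence $\check{\mathcal{A}}^{\dagger}(r)$ pairs, in the sense of connected correlators, with a single operator only: the correlator above vanishes identically for $n\geq 3$, reduces for $n=2$ to a two-point connected correlator supported in $\hbar$-degree $0$, and equals $\langle\check{\mathcal{A}}^{\dagger}(r)\rangle=0$ for $n=1$. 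Since $2g-2+n>0$ in the stable range, extracting $[\hbar^{2g-2+n}]$ kills it in every case, so $\Res_{\mu_1=-r}W_{g,n}=0$; by symmetry this holds for every $\mu_i$, and $W_{g,n}$ has no poles at negative integers.

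Combining these steps, $W_{g,n}\cdot\prod_{i}\prod_{j_i}(\mu_i-j_i/m)$ is a rational function with no finite poles, hence the polynomial $\mathrm{Poly}_{g,n}$, which proves the theorem. I expect the main obstacle to be the second step: carrying out the semi-infinite wedge summation carefully enough to prove \emph{simultaneously} that $W_{g,n}$ is rational, that in each $\mu_i$ its only poles are the simple good poles together with simple poles at negative integers (no spurious poles at $0$, at positive integers, or elsewhere being created by the intermediate summations), and that the residue at a negative integer is computed by the matrix-element identity of Proposition~\ref{prop:AinvcoefsPoly}. This is exactly where the detailed structure of the $\check{\mathcal{A}}$-operators, the $R$- and $Q$-polynomials, the divisibility statements of Propositions~\ref{P:uEcoeff} and~\ref{prop:IdDivis}, and the Faulhaber identities of Appendix~\ref{sec:appfaul} are needed.
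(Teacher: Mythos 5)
Your proposal follows essentially the same route as the paper: reduce via \eqref{eq:bacor} to the connected correlator of $\mathcal{A}$-operators, read off the candidate poles from Propositions~\ref{prop:AcoefsPoly} and~\ref{prop:AIDcoefsPoly}, and kill the poles at negative integers by identifying $\Res_{\mu_1=-r}\mathcal{A}(\mu_1,\hbar)$ with $c(r)\,\check{\mathcal{A}}^{\dagger}(r,\hbar)$ (Proposition~\ref{prop:AinvcoefsPoly}) and using the fact that $\alpha_r$ pairs with exactly one $\alpha_{-\mu_i}$, so the connected correlator vanishes for $n\geq 3$ and survives only in $\hbar$-degree $0$ for $n=2$. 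This is the paper's cancellation-in-inclusion-exclusion argument rephrased in cumulant language, and that part is sound.

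There are two concrete gaps. First, your concluding step (``a symmetric rational function with no finite poles besides the good ones, so clearing denominators gives a polynomial'') presupposes that $W_{g,n}$ is a rational function of $(\mu_1,\dots,\mu_n)$ \emph{jointly}. The wedge expansion only gives, for each fixed positive-integer choice of $\mu_2,\dots,\mu_n$, a finite sum and hence a rational function of $\mu_1$ alone, since the ranges of the $q$- and $l$-sums depend on $\mu_2+\dots+\mu_n$; separate polynomiality in each variable is all you obtain directly. Passing to a single polynomial $\mathrm{Poly}_{g,n}$ requires a degree bound on the numerator in $\mu_1$ that is uniform in $\mu_2,\dots,\mu_n$ --- this is precisely why Proposition~\ref{prop:AcoefsPoly} records $\deg \mathrm{S}_{p,l,q}\leq 6p+q$, which together with symmetry lets one interpolate; your proposal never invokes any degree bound. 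Second, the claim that ``the only possible poles in $\mu_i$ are the simple good poles at $j_i/m$ and simple poles at negative integers'' is false for the $\Id$-part: by \eqref{eq:AIDcoefsPoly} the coefficient of $\Id$ carries an a priori double pole at $k=0$ and a pole at $mk-k+1=0$, neither of which is touched by your residue computation at negative integers. For $n\geq 2$ this is harmless because the $\Id$-parts drop out of connected correlators (a cancellation you leave implicit in ``connected contraction patterns''), but the stable one-point correlator \emph{equals} this $\Id$-coefficient, and its polynomiality genuinely requires the divisibility of $\mathrm{S}^{\Id}_p$ by $k^2(mk-k+1)$, i.e.\ Proposition~\ref{prop:IdDivis}; you cite that proposition as ``needed'' but it must be wired into the $n=1$ pole analysis, not into the negative-integer residue argument you describe.
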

\begin{proof}
From \eqref{eq:bacor} we see that in order to prove the present theorem it is sufficient to prove the following statement for $(g,n)\notin \{(0,1),\,(0,2)\}$:
\begin{equation}\label{eq:conncorrqpol}
[\hbar^{2g-2+n}] \left\langle\prod\limits_{i=1}^n\mathcal{A}(\mu_i,\hbar) \right\rangle^{\circ} = \dfrac{\mathrm{Poly}_{g,n}(\mu_1,\dots,\mu_n)}{\displaystyle\prod_{i=1}^n\prod_{\substack{m\leq j_i \leq 4g-4+2n-1 \\ m\nmid j_i}} \left(\mu_i-\dfrac{j_i}{m}\right)},
\end{equation}
for $\mathrm{Poly}_{g,n}(\mu_1,\dots,\mu_n)$ being some polynomial in $\mu_1,\dots,\mu_n$.

From Proposition \ref{prop:AcoefsPoly} and Definition \ref{def:Aoperdef} we have:
\begin{equation}\label{eq:Aopercoefsexpr}
\mathcal{A}(k,\hbar) = \sum_{q=-k}^{\infty}\; \sum_{l\in \mathbb{Z}+\frac{1}{2}}\; \sum_{p=0}^{\infty} \hbar^{p+q}\; \mathfrak{A}_{p,l,q}(k)\, E_{l-q,l} + 
\sum_{p=-1}^\infty \hbar^p\,\mathfrak{I}_p(k)\mathrm{Id},
\end{equation}
where (for $q>0$)
\begin{equation}
\mathfrak{A}_{p,l,q}(k)=\dfrac{1}{\displaystyle\prod_{\substack{m\leq j \leq 2p-1 \\ m\nmid j}} \left(k-\dfrac{j}{m}\right)}\cdot\dfrac{\mathrm{S}_{p,l,q}(k)}{(k+1)(k+2)\cdots(k+q)},
\end{equation}
where $S_{p,l,q}(k)$ are the polynomials from \eqref{eq:AcoefsPoly}, and
\begin{equation}\label{eq:Iexpr}
\mathfrak{I}_{p}(k)=\dfrac{1}{\displaystyle\prod_{\substack{m\leq j \leq 2p-1 \\ m\nmid j}} \left(k-\dfrac{j}{m}\right)}\cdot \dfrac{\mathrm{S}^{\Id}_{p}(k)}{k^2\, (mk-k+1)},
\end{equation}
where $\mathrm{S}^{\Id}_{p}(k)$ are the polynomials from Proposition \ref{prop:AIDcoefsPoly}.
%
Note that the sum over $q$ in \eqref{eq:Aopercoefsexpr} starts from $q=-k$, since for $q<-k$ the expression in e.g. the second line of \eqref{eq:AEcoef} vanishes.

Consider the disconnected correlator
\begin{equation}\label{eq:correlcoefs}
[\hbar^{2g-2+n}]\left\langle \mathcal{A}(\mu_1,\hbar)\dots\mathcal{A}(\mu_n,\hbar)\right\rangle
\end{equation}
Recall \eqref{eq:Aopercoefsexpr}.
Consider the $\Id$-part of $\mathcal{A}$. Note that the inclusion-exclusion formula for the connected correlator in terms of the disconnected ones for $n\geq 2$ will always contain the term of the form 
\begin{equation}
\langle \mathcal{A}(\mu_1,\hbar)\rangle\langle \mathcal{A}(\mu_2,\hbar)\dots\mathcal{A}(\mu_n,\hbar)\rangle,
\end{equation} 
with the opposite sign. Note that in the one-point correlator only the $\Id$-part gives a nonzero contribution. More precisely, 
\begin{equation}
\langle \mathcal{A}(\mu_1,\hbar)\rangle = \sum_{p=-1}^\infty \hbar^p\,\mathfrak{I}_p(k)
\end{equation}
This is precisely the factor which the $\Id$-part of the operator $\mathcal{A}(\mu_1,\hbar)$ contributes to $\langle \mathcal{A}(\mu_1,\hbar) \mathcal{A}(\mu_2,\hbar)\dots\mathcal{A}(\mu_n,\hbar)\rangle$, i.e.
\begin{align}
&
\left\langle \sum_{p=-1}^{\infty} \hbar^p\, \mathfrak{I}_p\, \Id\; \mathcal{A}(\mu_2,\hbar)\dots\mathcal{A}(\mu_n,\hbar)\right\rangle 
\\ \notag &
= \langle \mathcal{A}(\mu_1,\hbar)\rangle\langle \mathcal{A}(\mu_2,\hbar)\dots\mathcal{A}(\mu_n,\hbar)\rangle
\end{align}
This means that these contributions precisely cancel in the inclusion-exclusion formula. Similar reasoning proves that for $n\geq 2$ the $\Id$-parts of the $\mathcal{A}$-operators do not give any nonzero contributions into the connected correlator at all. Thus any connected multi-point correlator can only have poles
coming from the $\Id$-less parts of the $\mathcal{A}$-operator. In what follows we will assume that $n>1$ and drop the $\Id$-part altogether, and after that we will treat the $n=1$ case separately at the very end of this proof of the present proposition.

Note that
\begin{equation}\label{eq:leftvacact}
\forall q \leq 0 \quad \langle 0 | E_{l-q,l} = 0,
\end{equation}
and thus only coefficients $\mathfrak{A}_{p,l,q}(\mu_1)$ with $q>0$ will appear in the correlator \eqref{eq:correlcoefs}.
	
Note that a correlator of the form
\begin{equation}
\left\langle \left(\sum_{l_1\in\mathbb{Z}+\frac{1}{2}}c_{l_1,q_1}E_{l_1-q_1,l_1}\right)\cdots \left(\sum_{l_n\in\mathbb{Z}+\frac{1}{2}}c_{l_n,q_n}E_{l_n-q_n,l_n}\right)\right\rangle 
\end{equation}
can only be nonzero if $\sum_{i=1}^n q_i = 0$.
Thus when one writes the $\mathcal{A}$-operators in terms of the sums over $q$ \eqref{eq:Aopercoefsexpr} in the disconnected correlator \eqref{eq:correlcoefs}
and expands the brackets, only the terms with  $\sum_{i=1}^n q_i = 0$ will survive. Also note that in any surviving term in this expression we have $q_1\geq 0$, due to \eqref{eq:leftvacact}. Finally, note that from \eqref{eq:Aopercoefsexpr} we have $q_i\geq -\mu_i$. Thus, for fixed $\mu_2,\dots,\mu_n$ we have
\begin{equation}
0\leq q_1 \leq \mu_2+\dots+\mu_n
\end{equation} 
and
\begin{equation}
\forall i\in\{2,\dots,n\} \quad -\mu_i \leq q_i \leq \sum_{\substack{2\leq j \leq n\\j\neq i}}\mu_j
\end{equation}
Thus when one plugs \eqref{eq:Aopercoefsexpr} into the correlator \eqref{eq:correlcoefs}
and expands all $q$-, $l$- and $p$-sums only a finite number of terms survives. 
Indeed, the fact that $q$-sums become finite is implied by the above reasoning; $p$-sums become finite since (after we dropped the $\Id$-part) we have only non-negative powers of $u$ entering the picture and we collect a coefficient in front of a specific power of $u$; and $l$-sums become finite since we have vacuum on both sides and since we already know that the $q$-sums are finite.

Since 
$S_{p,l,q}(k)$ are 
polynomials, this implies that correlator \eqref{eq:correlcoefs}
is a rational function in $\mu_1$ for fixed $\mu_2,\dots,\mu_n$, as a finite sum of rational functions. Moreover, we see that it can have at most simple poles at the negative integer points, at most simple poles at points $\frac{j}{m}$, $m\leq j \leq 4g-4+2n-1,\; m\nmid j$, and no other poles.

The fact that the disconnected correlators are rational functions in $\mu_1$ automatically implies that the connected ones are rational in $\mu_1$ as well. In order to prove that stable connected correlators are quasi-polynomial in $\mu_1$, i.e. they have the form \eqref{eq:conncorrqpol}, we only need to prove that they do not have poles at negative integer points.

Let us prove that  for any $r\in\mathbb{Z}$ we have
\begin{equation}\label{eq:corrreszero1}
\mathop{\Res}_{\mu_1 = - r}\left\langle \mathcal{A}(\mu_1,\hbar)\mathcal{A}(\mu_2,\hbar)\cdots\mathcal{A}(\mu_n,\hbar)\right\rangle^\circ = 0.
\end{equation}
Note that, from the definition \eqref{eq:Aoperdef} of $\mathcal{A}$-operators, this is equivalent to the following statement:
\begin{equation}\label{eq:corrreszero}
\mathop{\Res}_{\mu_1 = - r}\left\langle \mathcal{A}(\mu_1,\hbar)\check{\mathcal{A}}(\mu_2,\hbar)\cdots\check{\mathcal{A}}(\mu_n,\hbar)\right\rangle^\circ = 0.
\end{equation}
For $r\notin \mathbb{Z}_{>0}$ it is clear. 
Consider $r\in \mathbb{Z}_{>0}$. 
From Proposition~\ref{prop:AinvcoefsPoly}, for the disconnected correlator we have
\begin{equation}
\mathop{\Res}_{\mu_1 = - r}\left\langle \mathcal{A}(\mu_1,\hbar)\prod_{i=2}^n \check{\mathcal{A}}(\mu_i,\hbar)\right\rangle = 
c(r)
\left\langle\check{\mathcal{A}}^{\dagger}(r,\hbar) \prod_{i=2}^n \check{\mathcal{A}}(\mu_i,\hbar)\right\rangle,
\end{equation}
where $c(r)$ is the coefficient in Proposition~\ref{prop:AinvcoefsPoly}.
%
Recalling equations \eqref{eq:aopdef} and \eqref{eq:ainvopdef} we can see that the RHS of the previous equality reduces to
\begin{equation}\label{eq:rescord}
C\left\langle\exp \left( \sum_{i=1}^\infty \frac{\alpha_{i} p^*_i}{i} \right)
D(u)^m
	\alpha_r \prod_{i=2}^n \frac{\alpha_{-\mu_i}}{\mu_i}\right\rangle
\end{equation}
for some specific coefficient \( C\) that depends only on $r$ (and $m$).
Because \( [\alpha_k, \alpha_l] = k\delta_{k+l,0} \), and \( \alpha_{r} \) annihilates the vacuum, this residue is zero unless one of the \( \mu_i \) equals \( r \) for \( i \geq 2\).

Now return to the connected $n$-point correlator for $n>2$. It can be calculated from the disconnected one by the inclusion-exclusion principle, so in particular it is a finite sum of products of disconnected correlators. Hence the connected correlator is also a rational function in $\mu_1$, and all possible poles must be inherited from the disconnected correlators.
The above reasoning implies that we can assume \( \mu_i = r \) for some \( i \geq 2\). Without loss of generality we can assume that it is the case for $i=2$, and $\mu_i\not= r $ for $i\geq 3$.  Then we get a contribution from \eqref{eq:rescord}, but this is canceled in the inclusion-exclusion formula exactly by the term coming from
\begin{align}
&
\mathop{\Res}_{\mu_1 = - r}\left\langle\mathcal{A}(\mu_1,\hbar)\check{\mathcal{A}}(r,\hbar) \right\rangle\left\langle 
	\prod_{i=3}^n
	\check{\mathcal{A}}(\hbar, \mu_j)\right\rangle
\\ \nonumber 
&= C\left\langle e^{\alpha_1}
D(\hbar)^m
	\alpha_r\, \alpha_{-r}\right\rangle\left\langle e^{\alpha_1}
D(\hbar)^m
	\prod_{i=3}^n \alpha_{-\mu_i}\right\rangle \\ \nonumber
&= C \left\langle e^{\alpha_1}
	D(\hbar)^m
	\alpha_r \prod_{i=2}^n \alpha_{-\mu_i}\right\rangle
\end{align}
Thus we have proved \eqref{eq:corrreszero1} for $n>2$, which implies that for $n>2$ we have 
\begin{equation}
[\hbar^{2g-2+n}] \left\langle\prod\limits_{i=1}^n\mathcal{A}(\mu_i,\hbar) \right\rangle^{\circ} = \dfrac{\widetilde{\mathrm{Poly}}_{g,n}(\mu_1,\dots,\mu_n)}{\displaystyle\prod_{\substack{m\leq j \leq 2p-1 \\ m\nmid j}} \left(\mu_1-\dfrac{j}{m}\right)},
\end{equation}
where $\widetilde{\mathrm{Poly}}_{g,n}(\mu_1;\mu_2,\dots,\mu_n)$ is some expression polynomial in $\mu_1$, and $p=2g-2+n$.

Note that since polynomials $\mathrm{S}_{p,l,q}$ had their degree bounded from above by $6p+q$, the degree of $\widetilde{\mathrm{Poly}}_{g,n}$ as a polynomial in $\mu_1$ is bounded from above by $6p$, since, as follows from what we have shown above, the factors $(\mu_1+1)\cdots(\mu_1+q)$ in the denominator get canceled in the connected correlators, and thus the degree of the polynomial in the numerator gets decreased by $q$, thus resulting in the $6p$ bound.

Since the degree of $\widetilde{\mathrm{Poly}}_{g,n}(\mu_1;\mu_2,\dots,\mu_n)$ as a polynomial in $\mu_1$ is bounded from above by a number independent of $\mu_2,\dots,\mu_n$, and since the correlator \eqref{eq:correlcoefs} is symmetric in $\mu_1,\dots,\mu_n$, this implies the statement of the theorem, apart from the 1- and 2-point correlator cases.

Consider the 2-point correlators. Let us prove that they do not have poles at negative integers for the nonzero genus case. For $r\in\mathbb{Z}_{>0}$ we have
\begin{align}
&\mathop{\Res}_{\mu_1 = - r}[\hbar^p]\cord{\mathcal{A}(\mu_1,\hbar)\mathcal{A}(\mu_2,\hbar) }
= C[\hbar^p]\cord{e^{\alpha_1}
	D(\hbar)^m
	\alpha_r\, \alpha_{-\mu_2}}.
\end{align}
This is nonzero only for $\mu_2=r$ and equal to
\begin{equation}
C[\hbar^p]\cord{e^{\alpha_1}
	D(\hbar)^m}=
C[\hbar^p]\; 1
\end{equation}
which is obviously nonzero only for $p=0$, i.e. for the unstable case. Again, the fact that correlator \eqref{eq:correlcoefs} for $n=2$ is still symmetric in $\mu_1$ and $\mu_2$ and the analogous reasoning regarding the upper bound on the degree of the polynomial in $\mu_1$ in the numerator as in the case of $n>2$ imply that for $n=2$ the theorem holds as well.

What remains is to prove that stable one-point correlators are polynomial. Note that
\begin{equation}
[\hbar^p]\left\langle\mathcal{A}(k,\hbar)\right\rangle=\mathfrak{I}_p(k),
\end{equation}
since for any $i,\,j$ we have $\left\langle E_{i,j}\right\rangle=0$. Propositions \ref{prop:AIDcoefsPoly} and \ref{prop:IdDivis} together imply that for $p\geq 0$ the coefficient $\mathfrak{I}_p(k)$ (given in \eqref{eq:Iexpr}) is in fact a polynomial in $k$, which is precisely what we need.

Thus we conclude that the statement in \eqref{eq:conncorrqpol} holds for $n=1$ and $n=2$ as well, as long as $(n,k)\notin \{(1,-1),(2,0)\}$.	Together with what is written above this implies the statement of the theorem.

\end{proof}

The following corollary substantiates, as discussed in the Introduction and Section~\ref{sec:toporec-proof}, our motivation in proving the specific combinatorial structure of the BMS numbers provided by the theorem above. Recall the definition of the space $\Xi^d$ in Section~\ref{sec:specfuncs}. 
\begin{corollary} \label{corollaryW}
	For any $g\geq 0$ and $n\geq 1$, $2g-2+n >0$, there exists $d\geq 0$ and a function $W_{g,n}(z_1,\dots,z_n)\in \left(\Xi^d\right)^{\otimes n}$ such that
	\begin{equation}
	[X^{\mu_1}_1\cdots X^{\mu_n}_n]W_{g,n} = b^\circ_{m,g;\mu_1,\dots,\mu_n}.
	\end{equation}	
	Here $[X^{\mu_1}_1\cdots X^{\mu_n}_n]W_{g,n}$ corresponds to taking the coefficient in front of $X^{\mu_1}_1\cdots X^{\mu_n}_n$ in the series expansion of $W_{g,n}$ at $X_i\rightarrow 0$. 
\end{corollary}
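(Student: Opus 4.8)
The plan is to read off $W_{g,n}$ directly from the quasi-polynomial formula of Theorem~\ref{th:apoly} by matching its denominator against the one appearing in Proposition~\ref{prop:xiexp}. Set $p:=2g-2+n>0$, so that the product $\prod_{m\le j\le 4g-4+2n-1,\,m\nmid j}$ of Theorem~\ref{th:apoly} is exactly $\prod_{m\le j\le 2p-1,\,m\nmid j}$; since $m\mid m$, the conditions ``$m\le j$, $m\nmid j$'' and ``$m<j$, $m\nmid j$'' coincide, and likewise $j\le 2p-1$ is the same as $j<2p$. Thus the denominator of Theorem~\ref{th:apoly}, per variable, consists precisely of the factors $(\mu_i-\tfrac{j}{m})$ with $m<j<2p$ and $m\nmid j$, which form an initial segment of the factors in the denominator of Proposition~\ref{prop:xiexp}.

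First I would separate the variables in the numerator. As $\mathrm{Poly}_{g,n}(\mu_1,\dots,\mu_n)$ is a genuine polynomial, expanding it in monomials and regrouping yields a finite decomposition $\mathrm{Poly}_{g,n}=\sum_\alpha\prod_{i=1}^n P_{i,\alpha}(\mu_i)$ into products of one-variable polynomials; the degree-in-each-variable bound recorded in the proof of Theorem~\ref{th:apoly} (namely $\le 6p$) allows me to take $\deg P_{i,\alpha}\le 6p$. Next I fix one integer $d$ large enough that $m(d+1)-1\ge 2p-1$ and, simultaneously, $m(d+1)-1\ge 6p+N_d$, where $N_d$ is the number of integers $j$ with $2p\le j<m(d+1)$ and $m\nmid j$. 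Such $d$ exists because, as $d$ increases by one, the quantity $m(d+1)-1$ grows by $m$ while $N_d$ grows only by $m-1$, so the inequality holds for all sufficiently large $d$. For each $i,\alpha$ I then apply Proposition~\ref{prop:xiexp} to the one-variable polynomial $\hat P_{i,\alpha}(k):=P_{i,\alpha}(k)\prod_{2p\le j<m(d+1),\,m\nmid j}\bigl(k-\tfrac{j}{m}\bigr)$, which has degree $\le 6p+N_d\le m(d+1)-1$, obtaining $\xi_{i,\alpha}\in\Xi^d$. Cancelling the absorbed factors against the surplus in the denominator of Proposition~\ref{prop:xiexp} gives, for every $\mu_i\ge 1$,
\[
[X^{\mu_i}]\xi_{i,\alpha}=\frac{(m\mu_i-m)!}{\mu_i!\,(m\mu_i-\mu_i-1)!}\cdot\frac{P_{i,\alpha}(\mu_i)}{\displaystyle\prod_{\substack{m\le j\le 2p-1\\ m\nmid j}}\bigl(\mu_i-\tfrac{j}{m}\bigr)}.
\]

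Finally I would put $W_{g,n}:=\sum_\alpha\bigotimes_{i=1}^n\xi_{i,\alpha}\in(\Xi^d)^{\otimes n}$. Because coefficient extraction factorizes on a pure tensor, $[X_1^{\mu_1}\cdots X_n^{\mu_n}]W_{g,n}=\sum_\alpha\prod_{i=1}^n[X_i^{\mu_i}]\xi_{i,\alpha}$; substituting the displayed formula and reassembling $\sum_\alpha\prod_i P_{i,\alpha}(\mu_i)=\mathrm{Poly}_{g,n}$ reproduces exactly the right-hand side of Theorem~\ref{th:apoly}, hence $b^\circ_{g,\mu}$. The one genuine point requiring care---the main obstacle---is the existence of a single admissible $d$: one must verify that absorbing the surplus denominator factors into the $\xi$-numerators does not violate the degree bound $\deg\le m(d+1)-1$ of Proposition~\ref{prop:xiexp}, which is exactly where the uniform bound on the degree of $\mathrm{Poly}_{g,n}$ coming from the proof of Theorem~\ref{th:apoly} is needed. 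Everything else is routine bookkeeping of the two matching denominator patterns.
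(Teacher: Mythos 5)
Your proposal follows exactly the route the paper intends (its own proof of the corollary is the one\nobreakdash-line ``follows from Theorem~\ref{th:apoly} and Proposition~\ref{prop:xiexp}''), and the substance is right: separate variables in $\mathrm{Poly}_{g,n}$, pad each one-variable numerator by the missing denominator factors, and invoke the degree count $m(d+1)-1-N_d\to\infty$ to find an admissible $d$. One indexing slip should be fixed: the surplus of the denominator of Proposition~\ref{prop:xiexp} over that of Theorem~\ref{th:apoly} is $\{j:\ m<j<m(d+1),\ j\ge 2p,\ m\nmid j\}$, not $\{j:\ 2p\le j<m(d+1),\ m\nmid j\}$. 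These coincide when $2p>m$, but when $2p<m$ (e.g.\ $(g,n)=(0,3)$ with $m\ge 3$, where the theorem's denominator is empty) your $\hat P_{i,\alpha}$ acquires the extra factors $\bigl(k-\tfrac{j}{m}\bigr)$ for $2p\le j<m$, which are not present in the denominator of Proposition~\ref{prop:xiexp} and hence survive into $[X^{\mu_i}]\xi_{i,\alpha}$, spoiling the match with Theorem~\ref{th:apoly} in that range. Replacing your product by $\prod_{\substack{m<j<m(d+1),\ j\ge 2p\\ m\nmid j}}\bigl(k-\tfrac{j}{m}\bigr)$ (and adjusting $N_d$ accordingly, which only makes the degree inequality easier) repairs this, and the rest of the argument, including the uniform bound $\deg P_{i,\alpha}\le 6p$ taken from the proof of Theorem~\ref{th:apoly}, goes through as you wrote it.
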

\begin{proof}
	Follows from Theorem \ref{th:apoly} and Proposition \ref{prop:xiexp}.
\end{proof}

In particular, this corollary proves the structural statement of the $n$-point generating functions of the BMS numbers that we mentioned in Remark~\ref{rem:remark2xi}.

\appendix
\section{Around Faulhaber's formula} \label{sec:appfaul}
In the present paper we need several technical results related to the classical Faulhaber formula.

\begin{proposition}\label{prop:extFaulhaber}
	Expression
	\begin{equation}\label{eq:extFaulhaber}
	T_d(x,k):=\sum_{0\leq\gamma_1<\ldots<\gamma_d< k}(x+\gamma_1)\cdots(x+\gamma_d)
	\end{equation}
	is a polynomial in $x$ and $k$ of total degree $2d$.
\end{proposition}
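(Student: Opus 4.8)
The plan is to prove the statement by induction on $d$, exploiting a first-order recursion in the variable $k$ obtained by classifying the index tuples according to their largest entry. (Equivalently, $T_d(x,k)$ is the $d$-th elementary symmetric polynomial in the $k$ values $x,x+1,\dots,x+k-1$, which makes polynomiality in $x$ of degree $d$ transparent; the real content is joint polynomiality in $k$ together with the total-degree bound.) First I would record the trivial base case: for $d=0$ the sum is the empty product, so $T_0(x,k)=1$, a polynomial of total degree $0=2\cdot 0$. For the inductive mechanism, observe that passing from $k$ to $k+1$ enlarges the index range from $\{0,\dots,k-1\}$ to $\{0,\dots,k\}$, and a strictly increasing $d$-tuple in the larger range either avoids the new maximal value $k$ (contributing $T_d(x,k)$) or ends in $\gamma_d=k$, in which case the remaining $d-1$ entries form an increasing tuple in $\{0,\dots,k-1\}$ weighted by the extra factor $(x+k)$. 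This yields the recursion
\begin{equation}
T_d(x,k+1)-T_d(x,k)=(x+k)\,T_{d-1}(x,k),
\end{equation}
which, together with the obvious initial value $T_d(x,0)=0$ for $d\geq 1$, telescopes to
\begin{equation}
T_d(x,k)=\sum_{j=0}^{k-1}(x+j)\,T_{d-1}(x,j).
\end{equation}

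With this in hand, the induction step reduces to the classical Faulhaber phenomenon. By the induction hypothesis $(x+j)\,T_{d-1}(x,j)$ is a polynomial in $x$ and $j$ of total degree $2d-1$; writing it as $\sum_{a+b\leq 2d-1}c_{a,b}\,x^a j^b$ and summing term by term, I would use that $\sum_{j=0}^{k-1}j^b$ is a polynomial in $k$ of degree $b+1$ (the classical Faulhaber formula). Hence each contribution $x^a\sum_{j=0}^{k-1}j^b$ is a polynomial of total degree $a+(b+1)\leq 2d$, and summing over the finitely many monomials shows that $T_d(x,k)$ is a polynomial in $x$ and $k$ of total degree at most $2d$. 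This upper bound is already what is needed downstream, for instance to expand $R_p$ in the monomial basis in the proof of Proposition~\ref{prop:Rppoly}.

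To upgrade ``at most $2d$'' to ``exactly $2d$'', I would track the top homogeneous part through the summation. Anti-differencing in $k$ sends a monomial $x^a k^b$ to a polynomial whose leading total-degree term is $\tfrac{1}{b+1}x^a k^{b+1}$, with all remaining terms of strictly smaller total degree; and distinct pairs $(a,b)$ with $a+b=2d-1$ produce distinct top monomials $x^a k^{b+1}$, so no cancellation can occur. Thus the degree-$2d$ part of $T_d$ is obtained from the (nonzero) degree-$(2d-1)$ part of $(x+k)\,T_{d-1}$ by these nonzero rescalings, and is therefore itself nonzero, giving total degree exactly $2d$. I expect the only genuinely delicate point to be precisely this non-cancellation of the leading homogeneous part under summation; everything else is routine bookkeeping. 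As a by-product the same argument records $T_d(x,0)=0$ for $d\geq 1$, i.e. divisibility of $T_d$ by $k$, which is exactly the divisibility input invoked in Proposition~\ref{prop:Rppoly}.
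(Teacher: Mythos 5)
Your proof is correct and follows essentially the same route as the paper's: both reduce the nested sum to iterated applications of the classical Faulhaber formula (that $\sum_{j=0}^{k-1}j^b$ is a polynomial in $k$ of degree $b+1$), your recursion $T_d(x,k+1)-T_d(x,k)=(x+k)\,T_{d-1}(x,k)$ being just the paper's ``successive elimination'' of the summation variables packaged as an induction on $d$. You are somewhat more careful than the paper in that you also verify the total degree is exactly $2d$ by tracking the leading homogeneous part, a point the paper leaves to the reader.
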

\begin{proof}
	Recall the classical Faulhaber formula:
	\begin{equation}\label{eq:Faulhaber}
	\sum_{k=1}^n k^{p} = \frac{n^{p+1}}{p+1}+\frac{1}{2}n^p+\sum_{k=2}^p \;(p)_{k-1}\;n^{p-k+1}\;\frac{B_{k}}{k!},
	\end{equation}
	where $B_k$ are the Bernoulli numbers.
	Note that this formula implies that $\sum_{\gamma=0}^{k-1}\gamma^p$ is a polynomial in $k$ of degree $p+1$. Applying this repeatedly to the sum \eqref{eq:extFaulhaber} to successively eliminate the sum in $\gamma_1$, then the sum in $\gamma_2$, and so on, one can easily see that the statement of the proposition holds. 
\end{proof}

\begin{proposition}\label{prop:TtTrel}
	For the polynomial $T_d(x,k)$ defined in \eqref{eq:extFaulhaber} and 
	\begin{equation}
	\wt{T_d}(x,k):=\sum_{1\leq\gamma_1\leq\ldots\leq\gamma_d\leq k}(-x+\gamma_1)\cdots(-x+\gamma_d)
	\end{equation}	
	we have
	\begin{equation}\label{eq:TTtrel}
	\wt{T_d}(x,k)=T_d(x,-k)
	\end{equation}
	By taking $T_d(x,-k)$ we mean first computing the sum in \eqref{eq:extFaulhaber} (assuming that $d\geq k$ is a positive integer) and obtaining a polynomial in $k$ as shown in Proposition \ref{prop:extFaulhaber}, and only after that substituting $-k$ in place of $k$.
\end{proposition}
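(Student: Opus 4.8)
The plan is to recognize both $T_d(x,k)$ and $\wt{T_d}(x,k)$ as specializations of symmetric functions and then to pass to power sums, where the formal substitution $k\mapsto -k$ becomes transparent. First I would record the two generating series in an auxiliary variable $t$. Since $T_d(x,k)$ is the $d$-th elementary symmetric polynomial in the $k$ arguments $x,x+1,\dots,x+k-1$, one has
\[
\sum_{d\geq 0} T_d(x,k)\,t^d = \prod_{j=0}^{k-1}\bigl(1+t(x+j)\bigr),
\]
while $\wt{T_d}(x,k)$ is the $d$-th complete homogeneous symmetric polynomial in the $k$ arguments $-x+1,\dots,-x+k$, so that
\[
\sum_{d\geq 0}\wt{T_d}(x,k)\,t^d = \prod_{j=1}^{k}\bigl(1+t(x-j)\bigr)^{-1}.
\]
Taking logarithms rewrites each series in terms of the power sums $p_r(x,k):=\sum_{j=0}^{k-1}(x+j)^r$, which by the classical Faulhaber formula \eqref{eq:Faulhaber} are genuine polynomials in $k$ and $x$; concretely, $\log\sum_d T_d\,t^d=\sum_{r\geq1}\tfrac{(-1)^{r-1}}{r}p_r(x,k)\,t^r$, whereas $\log\sum_d\wt{T_d}\,t^d=-\sum_{r\geq1}\tfrac{(-1)^{r-1}}{r}\bigl(\sum_{j=1}^k(x-j)^r\bigr)t^r$.

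The key step is a reflection formula for the power sums under $k\mapsto -k$. Using the recurrence $p_r(x,k)-p_r(x,k-1)=(x+k-1)^r$, which holds for all positive integers $k$ and hence as a polynomial identity in $k$, I would substitute $k\mapsto -k$ and telescope downward from $p_r(x,0)=0$ to obtain
\[
p_r(x,-k) = -\sum_{j=1}^{k}(x-j)^r .
\]
A one-line check in the case $d=1$, where $T_1(x,k)=kx+\binom{k}{2}$, already exhibits the phenomenon $T_1(x,-k)=\wt{T_1}(x,k)$.

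Finally I would feed this reflection formula into the logarithmic generating series for $T$. Because the passage from power sums to elementary symmetric polynomials is a universal polynomial map independent of the number of arguments, the identity $T_d(x,k)=\mathrm{Newton}_d\bigl(p_1(x,k),\dots,p_d(x,k)\bigr)$, valid for all positive integers $k$, extends to a polynomial identity in $k$ and may therefore be evaluated at $-k$. Plugging $p_r(x,-k)=-\sum_{j=1}^k(x-j)^r$ into $\log\sum_d T_d(x,-k)\,t^d$ yields precisely $\sum_{r\geq1}\tfrac{(-1)^{r}}{r}\bigl(\sum_{j=1}^k(x-j)^r\bigr)t^r$, which coincides term by term with $\log\sum_d\wt{T_d}(x,k)\,t^d$ computed above; matching coefficients of $t^d$ then gives \eqref{eq:TTtrel}. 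The main obstacle here is one of rigour rather than of computation: the product and sum expressions literally make sense only for positive integer $k$, so the whole argument must be framed as an identity of polynomials in $k$ — legitimate precisely because the power sums are polynomial (Proposition \ref{prop:extFaulhaber}, via Faulhaber) and Newton's identities are coefficient-universal — and only then specialized at $k\mapsto -k$.
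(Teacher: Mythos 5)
Your argument is correct, and it takes a genuinely different route from the paper's. You identify $T_d(x,k)$ and $\widetilde{T}_d(x,k)$ as the elementary and complete homogeneous symmetric polynomials in the respective argument lists, pass to power sums via the logarithm of the generating series in $t$, and reduce the whole statement to the single reflection identity $p_r(x,-k)=-\sum_{j=1}^{k}(x-j)^r$, which you derive by telescoping the polynomial identity $p_r(x,k)-p_r(x,k-1)=(x+k-1)^r$ down from $p_r(x,0)=0$. The point you rightly flag as the only delicate one --- that every identity must first be promoted to an identity of polynomials in $k$ (legitimate because each side is polynomial in $k$ by Faulhaber and they agree at infinitely many positive integers) before the substitution $k\mapsto -k$ is performed --- is handled adequately by the coefficient-universality of Newton's identities. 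The paper instead argues by a double induction: first on $d$, using the difference recurrences $T_d(x,k)-T_d(x,k-1)=(x+k-1)\,T_{d-1}(x,k-1)$ and $\widetilde{T}_d(x,k)-\widetilde{T}_d(x,k-1)=\sum_{i=1}^{d}(-x+k)^i\,\widetilde{T}_{d-i}(x,k-1)$ to show that the first differences of $T_d(x,-k)$ and $\widetilde{T}_d(x,k)$ agree, and then on the integer $k$, starting from the common value $0$ at $k=0$, to conclude equality of the polynomials. Your route is more conceptual: it exhibits the proposition as the standard $e_d\leftrightarrow h_d$ reciprocity under negating the number of variables (the same phenomenon as the Ehrhart--Macdonald reciprocity the paper mentions after its proof), and it explains structurally why the strict inequalities $\gamma_1<\cdots<\gamma_d$ on the $T$ side become weak inequalities on the $\widetilde{T}$ side --- the product $\prod(1+t(x+j))$ turns into $\prod(1+t(x-j))^{-1}$. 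The paper's induction is more elementary and self-contained, using nothing beyond the two recurrences, at the cost of hiding this mechanism.
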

\begin{proof}
	First note that, analogous to Proposition \ref{prop:extFaulhaber}, $\wt{T_d}(x,k)$ is a polynomial in $k$ and $x$.
	
	Let us prove the current proposition by induction in $d$. For $d=1$ the proof is straightforward:
	\begin{align}
	T_1(x,k) &= \sum_{0\leq\gamma_1 < k}(x+\gamma_1) = \dfrac{1}{2}\, k\, (k+2x-1)\\ \nonumber
	\widetilde{T}_1(x,k) &= \sum_{1\leq\gamma_1 \leq k}(-x+\gamma_1) = \dfrac{1}{2}\, k\, (k-2x+1)
	\end{align}
	Let us assume that $\wt{T_i}(x,k)=T_i(x,-k)$ for all $i<d$. The definitions of $T$ and $\widetilde{T}$ directly imply that
	\begin{align}\label{eq:Tbbmrel}
	T_d(x,k)-T_d(x,k-1) &= (x+k-1)\; T_{d-1}(x,k-1) \\ \label{eq:Ttbbmrel}
	\widetilde{T}_d(x,k)-\widetilde{T}_d(x,k-1) &= \sum_{i=1}^d (-x+k)^i \;\widetilde{T}_{d-i}(x,k-1)
	\end{align}
	With the help of these two identities and the induction hypothesis we have 
	\begin{align}\label{eq:TtTcomp}
	&T_p(x,-b+1)-T_p(x,-b) \mathop{=}^{\eqref{eq:Tbbmrel}} (x-b)\; T_{p-1}(x,-b) \mathop{=}^{\eqref{eq:TTtrel}} (x-b)\; \wt{T}_{p-1}(x,b) \\ \nonumber
	&\mathop{=}^{\eqref{eq:Ttbbmrel}}(x-b) \left(\widetilde{T}_{p-1}(x,b-1) + \sum_{i=1}^{p-1} (-x+b)^i \;\widetilde{T}_{p-1-i}(x,b-1)\right)\\ \nonumber
	&=-(-x+b)\; \widetilde{T}_{p-1}(x,b-1) - \sum_{i=1}^{p-1} (-x+b)^{i+1} \;\widetilde{T}_{p-1-i}(x,b-1) \\ \nonumber
	&=-\sum_{i=1}^{p} (-x+b)^{i+1} \;\widetilde{T}_{p-i}(x,b-1) \\ \nonumber
	&\mathop{=}^{\eqref{eq:Ttbbmrel}}-\left(\widetilde{T}_p(x,b)-\widetilde{T}_p(x,b-1)\right)\\ \nonumber
	&=\widetilde{T}_p(x,b-1)-\widetilde{T}_p(x,b)
	\end{align}
	Now let us prove that $\wt{T_p}(x,b)=T_p(x,-b)$ holds for all nonnegative integer values of $b$ by induction in $b$. For $b=0$ both of these expressions are equal to $0$ since both of the polynomials $\wt{T_p}(x,b)$ and $T_p(x,b)$ are evidently divisible by $b$. Suppose that $\wt{T_p}(x,b-1)=T_p(x,-(b-1))$. Then, since the LHS of the top line of \eqref{eq:TtTcomp} is equal to the bottom line, we have
	\begin{equation}
	T_p(x,-b)-\wt{T_p}(x,b) \mathop{=}^{\eqref{eq:TtTcomp}} T_p(x,-(b-1))-\wt{T_p}(x,b-1)=0
	\end{equation}
	This proves that $\wt{T_p}(x,b)=T_p(x,-b)$ for all nonnegative integer values of $b$; thus they also coincide as polynomials in $b$, which proves the proposition.
\end{proof}
Let us note\footnote{We would like to thank Yu.~Burman for this comment.} that if we consider the polynomial $T_d(x,k)$ as an Ehrhart polynomial of some polytope then the previous proposition is just the Ehrhart--Macdonald reciprocity \cite{Mac}.

\begin{proposition}\label{prop:extFaulhaberDivis}
	Polynomial	$T_d(x,k)$ defined in \eqref{eq:extFaulhaber} is divisible by $k$ for $d\geq 1$.
\end{proposition}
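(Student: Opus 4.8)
The plan is to reduce the statement to showing $T_d(x,0)=0$ as a polynomial in $x$: since $T_d(x,k)$ is a genuine polynomial in $(x,k)$ by Proposition~\ref{prop:extFaulhaber}, vanishing at $k=0$ for all $x$ is exactly divisibility by $k$. The one subtlety I will keep in mind is that $T_d(x,k)$ is defined by a sum that is literal only for positive integers $k$ and is promoted to a polynomial by interpolation; I therefore cannot simply substitute $k=0$ into the empty sum, and instead I will propagate the vanishing through a recursion valid as an identity of polynomials.

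First I would invoke the recursion \eqref{eq:Tbbmrel}, namely $T_d(x,k)-T_d(x,k-1)=(x+k-1)\,T_{d-1}(x,k-1)$, which for positive integer $k$ comes from isolating in \eqref{eq:extFaulhaber} the tuples with largest element $\gamma_d=k-1$: such a term contributes the factor $(x+k-1)$ times the sum over the remaining $d-1$ indices all strictly below $k-1$, which is precisely $T_{d-1}(x,k-1)$. Since both sides are polynomials in $(x,k)$ agreeing at all positive integers $k$, it holds as a polynomial identity.

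Then I would induct on $d$. For the base case $d=1$ one has $T_1(x,k)=\sum_{0\le\gamma_1<k}(x+\gamma_1)=kx+\binom{k}{2}=k\bigl(x+\tfrac{k-1}{2}\bigr)$, which is divisible by $k$, so $T_1(x,0)=0$. For the inductive step, assuming $T_{d-1}(x,0)=0$, I would set $k=1$ in the recursion to obtain $T_d(x,1)-T_d(x,0)=x\,T_{d-1}(x,0)=0$; since for $d\ge 2$ the sum defining $T_d(x,1)$ ranges over tuples $0\le\gamma_1<\dots<\gamma_d<1$ and is empty (one cannot fit $d\ge 2$ distinct nonnegative integers below $1$), we get $T_d(x,1)=0$ and hence $T_d(x,0)=0$. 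This yields $k\mid T_d(x,k)$ for every $d\ge 1$.

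The only genuinely delicate point is the interpolation issue flagged above---that the vanishing at $k=0$ must be read off a polynomial identity rather than the formal empty sum---and the recursion disposes of it cleanly, while the remaining steps are routine. I would also note in passing that the same recursion, combined with the combinatorial vanishing $T_d(x,k)=0$ for $k=1,\dots,d-1$, gives the stronger divisibility of $T_d(x,k)$ by $k(k-1)\cdots(k-d+1)$, although only the factor $k$ is needed in Section~\ref{sec:aops}.
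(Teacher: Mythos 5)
Your proof is correct, and it takes a genuinely different route from the paper's. The paper first shows that $\wt{T_d}(x,k)$ is divisible by $k$ --- this follows directly from Faulhaber's formula \eqref{eq:Faulhaber}, since for $\wt{T_d}$ the outermost summation variable runs from $1$ to $k$ and every term of $\sum_{j=1}^k j^p$ carries a factor of $k$ --- and then transfers the divisibility to $T_d$ via the reciprocity $\wt{T_d}(x,k)=T_d(x,-k)$ of Proposition~\ref{prop:TtTrel}. You instead work with $T_d$ alone: you reduce divisibility by $k$ to the vanishing $T_d(x,0)=0$, establish the recursion \eqref{eq:Tbbmrel} as a polynomial identity, and close the induction on $d$ by evaluating at $k=1$, where the defining sum is empty for $d\geq 2$. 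What your approach buys is self-containedness: it needs neither Proposition~\ref{prop:TtTrel} nor Faulhaber's formula beyond the bare polynomiality statement of Proposition~\ref{prop:extFaulhaber}, and it avoids leaning on the reciprocity statement, whose proof in the paper already asserts at its base case $b=0$ that $T_p(x,b)$ is ``evidently divisible by $b$'' --- so your chain of dependencies is arguably cleaner. The only step to state explicitly is that the polynomial $T_d(x,k)$ agrees with the literal (empty) sum at the positive integer $k=1$; this is exactly what Proposition~\ref{prop:extFaulhaber} asserts (the expression \eqref{eq:extFaulhaber}, as a function of positive integer $k$, coincides with a polynomial), so $T_d(x,1)=0$ for $d\geq 2$ is a legitimate evaluation, and your care in not substituting $k=0$ directly is exactly right. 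Your closing remark on the stronger divisibility by $k(k-1)\cdots(k-d+1)$ over $\Q[x,k]$ is also correct, by the same vanishing argument applied at $k=0,1,\dots,d-1$.
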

\begin{proof}
	Note that for $d\geq 1$ the polynomial $\wt{T_d}(x,k)$ defined in proposition \ref{prop:TtTrel} is divisible by $k$. This immediately follows from Faulhaber's formula \eqref{eq:Faulhaber}, as after taking the sum over $\gamma_1,\dots,\gamma_{d-1}$ we will have a sum over $\gamma_d$ from $1$ to $k$ of a polynomial expression in $\gamma_d$ and from \eqref{eq:Faulhaber} we clearly see that for each individual power the sum is divisible by $k$. Then Proposition \ref{prop:TtTrel} implies the present proposition, as $\wt{T_d}(x,k)=T_d(x,-k)$ and $T_d(x,-k)$ is divisible by $k$ if and only if $T_d(x,k)$ is divisible by $k$.
\end{proof}

\end{document}